\pdfoutput=1
\newif\ifpersonal
\personaltrue 
\documentclass[12pt,a4paper,dvipsnames,x11names]{amsart} 
\usepackage{amsmath,amsthm,amssymb,mathrsfs,mathtools,bm,eucal,tensor} 
\usepackage{stmaryrd}
\usepackage[all,cmtip]{xy}
\usepackage{dsfont}
\usepackage{microtype,lmodern} 
\usepackage[utf8]{inputenc} 
\usepackage[T1]{fontenc} 
\usepackage{enumerate,comment,braket,xspace,tikz-cd,csquotes} 
\usepackage[centering,vscale=0.75,hscale=0.75]{geometry}
\usepackage{xcolor}
\usepackage[hidelinks,urlcolor=black,colorlinks=true,linkcolor=Red4,citecolor=RoyalBlue]{hyperref}
\usepackage[capitalize]{cleveref}
\usepackage{mathpazo}
\usepackage{euler}

\usepackage{aliascnt} 

\newaliascnt{thm}{subsection}
\newtheorem{thm}[thm]{Theorem}
\aliascntresetthe{thm}
\crefname{thm}{Theorem}{Theorems}

\newaliascnt{theorem}{subsection}
\newtheorem{theorem}[theorem]{Theorem}
\aliascntresetthe{theorem}
\crefname{theorem}{Theorem}{Theorems}

\newaliascnt{prop}{subsection}
\newtheorem{prop}[prop]{Proposition}
\aliascntresetthe{prop}
\crefname{prop}{Proposition}{Propositions}

\newaliascnt{proposition}{subsection}

\aliascntresetthe{proposition}
\crefname{proposition}{Proposition}{Propositions}

\newaliascnt{cor}{subsection}
\newtheorem{cor}[cor]{Corollary}
\aliascntresetthe{cor}
\crefname{cor}{Corollary}{Corollaries}

\newaliascnt{lem}{subsection}
\newtheorem{lem}[lem]{Lemma}
\aliascntresetthe{lem}
\crefname{lem}{Lemma}{Lemmas}

\newaliascnt{lemma}{subsection}

\aliascntresetthe{lemma}
\crefname{lemma}{Lemma}{Lemmas}

\newaliascnt{question}{subsection}
\newtheorem{question}[question]{Question}
\aliascntresetthe{question}
\crefname{question}{Question}{Questions}

\newaliascnt{conjecture}{subsection}

\aliascntresetthe{conjecture}
\crefname{conjecture}{Conjecture}{Conjectures}

\newaliascnt{eg}{subsection}

\aliascntresetthe{eg}
\crefname{eg}{Example}{Examples}

\theoremstyle{definition}

\newaliascnt{defin}{subsection}
\newtheorem{defin}[defin]{Definition}
\aliascntresetthe{defin}
\crefname{defin}{Definition}{Definitions}

\newaliascnt{definition}{subsection}
\newtheorem{definition}[definition]{Definition}
\aliascntresetthe{definition}
\crefname{definition}{Definition}{Definitions}

\newaliascnt{assumption}{subsection}
\newtheorem{assumption}[assumption]{Assumption}
\aliascntresetthe{assumption}
\crefname{assumption}{Assumption}{Assumptions}

\newaliascnt{recollection}{subsection}
\newtheorem{recollection}[recollection]{Recollection}
\aliascntresetthe{recollection}
\crefname{recollection}{Recollection}{Recollections}

\newaliascnt{observation}{subsection}

\aliascntresetthe{observation}
\crefname{observation}{Observation}{Observations}

\newaliascnt{situation}{subsection}

\aliascntresetthe{situation}
\crefname{situation}{Situation}{Situations}

\newaliascnt{remark}{subsection}

\aliascntresetthe{remark}
\crefname{remark}{Remark}{Remarks}

\newaliascnt{rem}{subsection}
\newtheorem{rem}[rem]{Remark}
\aliascntresetthe{rem}
\crefname{rem}{Remark}{Remarks}

\newaliascnt{remarks}{subsection}

\aliascntresetthe{remarks}
\crefname{remarks}{Remarks}{Remarks}

\newaliascnt{example}{subsection}
\newtheorem{example}[example]{Example}
\aliascntresetthe{example}
\crefname{example}{Example}{Examples}

\newaliascnt{construction}{subsection}
\newtheorem{construction}[construction]{Construction}
\aliascntresetthe{construction}
\crefname{construction}{Construction}{Constructions}

\newaliascnt{defth}{subsection}

\aliascntresetthe{defth}
\crefname{defth}{Definition-Theorem}{Definition-Theorems}

\numberwithin{equation}{subsection}

\newaliascnt{notation}{subsection}
\newtheorem{notation}[notation]{Notation}
\aliascntresetthe{notation}
\crefname{notation}{Notation}{Notations}

\newaliascnt{linear}{subsection}

\aliascntresetthe{linear}
\crefname{linear}{Linear overview}{Linear overviews}

\usepackage{etoolbox}
\patchcmd{\section}{\scshape}{\bfseries}{}{}
\makeatletter
\renewcommand{\@secnumfont}{\bfseries}
\makeatother

\setcounter{tocdepth}{1}

\newcommand{\sep}{\mathrm{sep}}

\newcommand{\cl}{\mathrm{cl}}
\newcommand{\fm}{\mathfrak m}

\newcommand{\Hom}{\mathrm{Hom}}
\newcommand{\id}{\mathrm{id}}

\newcommand{\dt}{\mathrm{dimtot}}

\newcommand{\sw}{\mathrm{sw}}

\newcommand{\sO}{\mathscr{O}}

\newcommand{\cF}{\mathcal F}
\newcommand{\fc}{\mu}
\newcommand{\fd}{P}

\newcommand{\bZ}{\mathbb Z}

\newcommand{\bA}{\mathbb A}
\newcommand{\bP}{\mathbb P}
\newcommand{\bQ}{\mathbb Q}
\newcommand{\bT}{\mathbb T}
\newcommand{\bG}{\mathbb G}

\newcommand{\bF}{\mathbb F}
\newcommand{\bN}{\mathbb N}

\newcommand{\bC}{\mathbb C}

\newcommand{\cE}{\mathcal E}

\newcommand{\cM}{\mathcal M}
\newcommand{\cK}{\mathcal K}
\newcommand{\cP}{\mathcal P}

\newcommand{\cO}{\mathcal O}
\newcommand{\cC}{\mathcal C}
\newcommand{\cH}{\mathcal H}
\newcommand{\cL}{\mathcal L}
\newcommand{\cI}{\mathcal I}

\DeclareMathOperator{\et}{\text{ét}}
\DeclareMathOperator{\Fl}{Fl}
\DeclareMathOperator{\Ens}{Set}
\DeclareMathOperator{\Sch}{Sch}
\DeclareMathOperator{\Sym}{Sym}
\DeclareMathOperator{\Tr}{Tr}

\DeclareMathOperator{\rk}{rk}

\DeclareMathOperator{\topo}{top}

\DeclareMathOperator{\sbar}{\overline{s}}
\DeclareMathOperator{\ubar}{\overline{u}}
\DeclareMathOperator{\xbar}{\overline{x}}

\DeclareMathOperator{\Cons}{Cons}
\DeclareMathOperator{\lc}{lc}
\DeclareMathOperator{\Perv}{Perv}

\newcommand{\Qlbar}{\overline{\bQ}_\ell}  

\DeclareMathOperator{\Coh}{Coh}
\DeclareMathOperator{\totdeg}{totdeg}

\DeclareMathOperator{\length}{length}
\DeclareMathOperator{\tors}{tors}

\DeclareMathOperator{\GL}{GL}

\DeclareMathOperator{\Spec}{Spec}

\DeclareMathOperator{\dimtot}{dimtot}

\DeclareMathOperator{\Rk}{Rk}
\DeclareMathOperator{\Supp}{Supp}
\DeclareMathOperator{\Weil}{Weil}

\DeclareMathOperator{\LC}{Loc}

\DeclareMathOperator{\Loc}{Loc}
\DeclareMathOperator{\Gr}{Gr}
\DeclareMathOperator{\ft}{tf}

\DeclareMathOperator{\op}{op}
\DeclareMathOperator{\Fun}{Fun}
\DeclareMathOperator{\cHom}{\cH om}

\usepackage[english]{babel}

\title[]{Characteristic cycle and wild Lefschetz theorems}

\begin{document}

\author[H.Hu]{Haoyu Hu} 
\address{School of Mathematics, Nanjing University, Hankou Road 22, Nanjing, China}
\email{huhaoyu@nju.edu.cn, huhaoyu1987@gmail.com}

\author[J.-B. Teyssier]{Jean-Baptiste Teyssier}
\address{Institut de Math\'ematiques de Jussieu, 
Institut universitaire de France (IUF)}
\email{jean-baptiste.teyssier@imj-prg.fr}

\setcounter{tocdepth}{1}
\begin{abstract}
By relying on a new approach to  Lefschetz type questions based on Beilinson's singular support and Saito's characteristic cycle, we prove an instance of the wild Lefschetz theorem envisioned by Deligne.
Our main tool are new finiteness results for the characteristic cycles of perverse sheaves.
\end{abstract}

\maketitle

\tableofcontents

\section{Introduction}
This paper is a contribution to Lefschetz type theorems in positive characteristic by means of Beilinson's singular support and Saito's characteristic cycle. \medskip

Let $X \subset \bP_{\bC}$ be a smooth complex connected projective variety of dimension $\geq 2$.
By a fundamental result of Lefschetz, 
for every sufficiently generic hyperplane $H$, the induced map at the level of the fundamental groups
$$
\pi_1^{\topo}(X\cap H)\to \pi^{\topo}_1(X)
$$ 
is  surjective.
When $X$ is quasi-projective, Lefschetz's theorem still holds by works of Goresky-MacPherson \cite{GM} and Hamm-Lê \cite{HL}.
A natural question is to know if a similar statement holds for the étale fundamental group over some algebraically closed field $k$ of characteristic $p>0$.
See \cite{ELefschetz} for a survey of this question.
In the projective case, a positive answer was given by Grothendieck \cite{SGA2}.
In the quasi-projective case however, the answer is already negative for $U=\bA_k^2$ as shown in \cite[Lemma 5.4]{ELefschetz}.
When $U$ is the complement of a strict normal crossing divisor $D$ in a smooth projective variety $X\subset \bP_k$, the Lefschetz theorem can nonetheless be saved  if instead of considering the full étale fundamental group $\pi_1^{\et}(U)$ one considers its quotient $\pi_1^{t}(U)$ classifying étale covers of $U$ with \textit{tame} ramification along $D$ \cite{DriDel,EKin}.
Beyond the tame case, wild ramification needs to be bounded.
A natural way to do this is by means of an effective Cartier divisor $R$ supported on $D$.
\begin{defin}\label{def_provisoire}
We say  that $\cL\in \Loc(U,\Qlbar)$ has \textit{log conductors bounded by $R$} if for every morphism $f : C\to X$ over $k$ where $C$ is a smooth curve over k not mapped inside $D$ and for every $x\in f^{-1}(D)$, the higher logarithmic ramification slope of $\cL|_{C\times_X U}$ at $x$ as in   \cite{CL} is smaller than the multiplicity of $f^*R$ at $x$.
\end{defin}

     Note that \cref{def_provisoire} makes sense even for $X$ singular and $\Qlbar$ replaced by a finite local ring  $\Lambda$ of residue characteristic $\ell \neq p$.
     The following remark ensures that when $X$ is smooth, checking  log conductors boundedness by $R$ is easy.
\begin{rem}[{\cite[Proposition 5.8]{HuTeyssierCohBoundedness}}]
If $X$ is smooth over $k$, the sheaf $\cL\in \Loc(U,\Qlbar)$ has log conductors bounded by $R$  if for every irreducible component $Z$ of $D$, the logarithmic conductor of $\cL$ at the generic point of $Z$ is smaller than the multiplicity of $Z$ in $R-D$.
\end{rem}

\medskip

      As a replacement for surjectivity, 
Deligne asked \cite[Introduction]{ES}  the following

\begin{question}\label{q_deligne}
Let $X\subset \bP_k$ be a normal projective variety and let $U$ be the complement of an effective Cartier divisor $D$ of $X$.
Given $r\geq 0$ and an effective Cartier divisor $R$ supported on  $D$, can one  find a hypersurface $H\subset \bP_k$ such that for every $\cL\in \Loc(U,\Qlbar)$ with rank $\leq r$ and log conductors bounded by $R$, the sheaves $\cL$ and $\cL|_{U\cap H}$ have the same monodromy groups?
\end{question}
\cref{q_deligne} was answered positively in the rank 1 case in \cite{KS} when the compactification $X$  is smooth and $D$ is a strict normal crossing divisor and in \cite{ES} in the rank 1 case when $X$ is normal and $D$ is an effective Cartier divisor.\medskip

The following theorem offers the first instance beyond the abelian case of a theorem of Lefschetz type for wildly ramified local systems.

\begin{thm}\label{thm_5}
Let $X$ be a projective scheme of pure dimension $n\geq 2$ over $k$ algebraically closed.
Let $j : U\hookrightarrow X$ be an open immersion with $U$ smooth connected over $k$ such that $D:=X-U$ is the support of an effective Cartier divisor.
Then, there is a closed immersion $i : X\hookrightarrow \bP_k$ such that for  every finite local ring  $\Lambda$ of residue characteristic $\ell \neq p$, there is a function $N : \bN^2 \to \bN$ such that for every $r\geq 0$, every effective Cartier divisor $R$ supported on $D$, there is a dense open subset $V\subset (\bP_k^{\vee})^{N(\deg R,r)}$ satisfying the following: for every $(H_1,\dots, H_{N(\deg R,r)})\in V$ and every $\cL\in \Loc_{tf}(U,\Lambda)$ of rank $\leq r$ with log conductors bounded by $R$, there is $1\leq  a \leq N(\deg R,r)$ such that $\cL$ and $\cL|_{U\cap H_a}$ have the same monodromy group.
\end{thm}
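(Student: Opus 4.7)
The plan is to translate equality of monodromy groups into finitely many cohomological comparisons of tensor constructions and control these uniformly by means of the characteristic cycle, invoking the paper's main finiteness result as the key technical input. By Tannakian formalism, for $\cL$ of rank $\leq r$ the equality of monodromy groups of $\cL$ and of its restriction to $U\cap H$ is equivalent to full faithfulness of the restriction functor between the Tannakian subcategories of $\Loc(U,L)$ and $\Loc(U\cap H, L)$ generated by $\cL$ and $\cL^\vee$. Invoking Chevalley's theorem---every closed subgroup of $\GL_r$ is the stabilizer of a line in a tensor construction of the standard representation of weight bounded by a function of $r$ only---I would reduce this to finitely many comparisons of the form $H^0(U, T_\alpha(\cL)) \iso H^0(U\cap H, T_\alpha(\cL)|_{U\cap H})$, where the Schur functors $T_\alpha$ depend only on $r$. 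Each $T_\alpha(\cL)$ is again a local system of rank bounded by a function of $r$ and log conductors bounded by a function of $r$ and $R$.

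The key input enters here: the advertised finiteness theorem for characteristic cycles should imply that as $\cL$ varies in this bounded family, the characteristic cycles $CC(j_! T_\alpha(\cL)[n])$ in $T^*X$ take only finitely many values. For any hyperplane $H$ which is singular-support transverse to all of these, Saito's restriction formula for the characteristic cycle together with the Milnor formula would yield the cohomological equalities required above. The locus of such $H$ is dense open in $\bP_k^\vee$ and depends on $\cL$ only through its finitely many possible characteristic cycles. To produce a tuple of $N$ hyperplanes, I would next bound the dimension of the moduli $\cM_{r,R}(U)$ of local systems on $U$ of rank $\leq r$ with log conductors $\leq R$ by a function of $r$ and $\deg R$, and observe that the \enquote{bad} locus in $\cM_{r,R}(U) \times \bP_k^\vee$---where the monodromy equality fails---has fibers of codimension $\geq 1$ over $\cM_{r,R}(U)$. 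A standard fiber-dimension argument applied to the $N$-fold fiber product then shows that for $N(\deg R, r) = \dim \cM_{r,R}(U) + 1$ and a suitable dense open $V \subset (\bP_k^\vee)^N$, every tuple in $V$ works for every $\cL$.

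The main obstacle is clearly the finiteness of characteristic cycles under bounded rank and log conductors. The support of the singular support can be controlled by the Abbes--Saito ramification theory of $\cL$, but bounding the multiplicities of $CC(j_! T_\alpha(\cL)[n])$ uniformly across this family of local systems---and propagating the control to all Schur functor constructions $T_\alpha$, whose conductors can a priori grow with the tensor complexity---is where the paper's novel technical contribution must lie.
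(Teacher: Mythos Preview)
Your proposal has two genuine gaps.

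First, the Tannakian/Chevalley reduction does not control the monodromy group as defined here. The monodromy group of $\cL\in\Loc(U,L)$ is the \emph{image} of $\pi_1(U,\bar u)$ in $\GL_r(L)$, a compact $\ell$-adic group, not its Zariski closure. Chevalley's theorem characterizes closed \emph{algebraic} subgroups of $\GL_r$ as stabilizers of lines in tensor constructions; it says nothing about arbitrary closed subgroups of $\GL_r(L)$. Full faithfulness on the Tannakian subcategory generated by $\cL,\cL^\vee$ detects equality of algebraic monodromy groups, which is strictly weaker than what is claimed. The paper avoids this by a different reduction: an observation of Esnault--Srinivas (\cref{Esnault_Srinivas}) shows that for $\cO_L$-lattices, equality of the images modulo $\mathfrak m_L^2$ forces equality of the full images. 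One is then reduced to a single \emph{finite} group $G\subset\GL_r(\cO_L/\mathfrak m_L^2)$, and \cref{same_monodromy_criterion} detects equality of images via $H^0$ of the regular representation $\Lambda_1[G]$, a locally constant sheaf of rank $|G|\leq |\GL_r(\cO_L/\mathfrak m_L^2)|$ whose log conductors are bounded by those of $\cL$.

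Second, the fiber-dimension argument over a moduli $\cM_{r,R}(U)$ cannot work: as the introduction explains, the quotient $\pi_1(U,R)$ classifying covers with ramification bounded by $R$ is in general \emph{not} topologically finitely generated (already its pro-$p$ abelianization is not, by Artin--Schreier), so there is no finite-type moduli whose dimension you could bound. The paper's substitute is purely numerical: \cref{Bounddmdeg_normal} bounds the \emph{total degree} of $CC(i_*j_!\Lambda_0[\cL_0,\bar u][n])$, hence the degree of the divisor $p^\vee(\bP(SS))\subset\bP_k^\vee$, uniformly in $\cL$ by a function of $r$ and $R$ alone. This is a degree bound, not a finiteness-of-cycles statement. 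The combinatorics of \cref{points_avoinding_hypersurface} then guarantees that among $N\geq\binom{m+d}{d}$ generic hyperplanes, at least one avoids any given degree-$d$ divisor in $\bP_k^\vee$; this is what produces the function $N(\deg R,r)$ and the dense open $V\subset(\bP_k^\vee)^N$, with no moduli space entering.
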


Ignoring the quantitative aspect of \cref{thm_5},  one gets the following
\begin{thm}\label{thm_5bis}
In the setting of \cref{thm_5},  there is a closed  immersion $i : X\hookrightarrow \bP_k$  such that for every prime $\ell \neq p$,  every $r\geq 0$ and every effective Cartier divisor $R$ supported on $D$,  there exist hyperplanes $(H_1,\dots, H_{N})$ with $N$ depending on $\ell,  r$ and $R$ such that for every  $\cL\in  \Loc(U,\bF_{\ell})$ of rank $\leq r$ with log conductors bounded by $R$,  one of the $H_i$ preserves the monodromy group of $\cL$.
\end{thm}

Let us explain the differences between \cref{thm_5} and  Deligne's original question.
Firstly, \cref{thm_5} applies to finite coefficients   while Deligne asks for $\Qlbar$-coefficients.
To explain what makes the finite coefficient case already meaningful, observe that the objects of $\Loc_{tf}(U,\Lambda)$ with log conductors bounded by $R$ are the representations of a quotient $\pi_1(U,R)$ of $\pi_1(U)$ classifying finite étale covers with ramification bounded by $R$ (see \cite{Hiranouchi}).
Artin-Schreier's theory implies that the pro-p abelian quotient of $\pi_1(U,R)$ is not topologically finitely generated in general.
Thus, $\pi_1(U,R)$ is not topologically finitely generated in general.
Hence, even in the finite coefficient case, defining an object of $\Loc_{tf}(U,\Lambda)$ requires to specify an \textit{infinite} number of data.
What \cref{thm_5} says is that these data are captured by  a \textit{finite} number of hyperplane sections.
Secondly, Deligne's question is about the existence of a single hypersurface of possibly high degree depending on the divisor $R$, whereas \cref{thm_5} sticks to hyperplane sections for an embedding \textit{independent} of $R$. 
Also,  we allow some flexibility in the choice of these hyperplanes.
The price to pay for using only hyperplanes is that several are needed.

\begin{rem}
If one is interested by cutting down the dimension further with higher codimension projective subspaces,  one can construct a moduli of multi-flags containing a dense open subset of points realizing the Lefschetz theorem.
This moduli provides some flexibility useful in practice.  
Using it, one can indeed show that the projective subspaces realizing the Lefschetz theorem can be taken in some prescribed dense open subset of the Grassmannian.
See \cref{Wild_Lefschetz} for the full statement.
\end{rem}

The proofs of \cref{thm_5} rely on a new approach to questions of Lefschetz type based on Beilinson's singular support and Saito's characteristic cycle.
Namely given a  quasi-projective variety $U\subset \bP_k$, a hyperplane $H$ and $\cL\in \Loc_{tf}(U,\Lambda)$, we give a criterion for $\cL$ and $\cL|_{U\cap H}$  to have  the same monodromy group in terms of the transversality of $ H \hookrightarrow \bP_k$ with respect to the singular support of some auxiliary sheaf constructed from $\cL$ (see \cref{sheaf_criterion_same_monodromy} and \cref{transversality_and_Hom}).
The whole point of \cref{thm_5} is then to produce enough hyperplanes satisfying this transversality criterion, or equivalently to show that the singular supports under scrutiny can be controlled in some sense.\medskip

To explain how to do this, let us recall Beilinson's singular support construction in algebraic geometry. 
For a transcendental construction predating Beilinson's construction see \cite{Kashiwara_Schapira}.
For a smooth scheme of finite type $X$ over a perfect field $k$ of characteristic $p>0$ and for a finite field $\Lambda$ of characteristic $\ell\neq p$,  Beilinson associates  to every object $\cK\in D_{c}^{b}(X,\Lambda)$  a closed conical subset $SS(\cK)$ of the cotangent bundle $\bT^*X$ of $X$, called the \textit{singular support of $\cK$} and recording the hyperplanes through which the étale local sections of $\cK$ don't extend \cite{bei}.
In particular $SS(\cK)$ measures how far $\cK$ is from being locally constant.
The propagation defect of sections through a hyperplane $H$ is reflected by the non triviality of some monodromy action on the complex of vanishing cycles of $\cK$.
The \textit{characteristic cycle $CC(\cK)$ of $\cK$} is a cycle supported  on $SS(\cK)$ measuring the wild ramification of this monodromy action through the so-called Milnor formula \cite[Theorem 5.9]{cc}.\\ \indent
Thus, for every $\cK$ as above,  basic  numerical data can be attached : the number of irreducible components of $SS(\cK)$ and the multiplicities of $CC(\cK)$.
To make sense of a third basic set of data playing a crucial role in this paper, let us recall following \cite{bei} that for a sufficiently nice closed immersion $i : X\hookrightarrow \bP_k$ in some projective space  (see \cref{relative_beilinson_assumption}) and for every $\cK\in D_{c}^{b}(\bP_k,\Lambda)$ supported on $X$ with 
$$
CC(\cK)= \sum_a  m_a\cdot [C_a]
$$ 
where the $C_a \subset \bT^*\bP_k$ are distinct closed  irreducible conical subsets, the images $D_a$ of the projectivisation $\bP(C_a)\subset  \bP(\bT^*\bP_k)$ by the map $p^{\vee} : \bP(\bT^*\bP_k)\to \bP_k^{\vee}$ sending $(x,H)$ to $H$ is a divisor of $\bP_k^{\vee}$  and the induced map $\bP(C_a)\to D_a$ is generically radicial.
In a nutshell, this says that sufficiently nice embeddings have enough hyperplanes to distinguish the components of $CC$ for sheaves supported on $X$.
Hence, one may further consider the degrees $d_a$ of the maps $\bP(C_a)\to D_a$ and the degrees of the divisors $D_a$.
Let us package these numerical data into the \textit{total degree of $CC(\cK)$ with respect to $i : X\hookrightarrow \bP_k$} defined by
$$
\totdeg_i(CC(\cK)):=(-1)^{n-m} \sum_{a}  d_{a}  \cdot m_{a}  \cdot \deg D_{a}  
$$
where $n=\dim X$ and $m =\dim \bP_k$.
From the perspective of \cref{thm_5}, the degrees $\deg D_a$ are the numerical data we need to bound.
As explained above, proving \cref{thm_5} indeed amounts to construct enough hyperplanes $H$ transversal to some suitable singular supports $SS(\cK)$.
On the other hand transversality of $H$ with respect to $SS(\cK)$ is equivalent to ask for $H$ to avoid all the $D_a$.
By some basic algebraic geometry lemma (see \cref{points_avoinding_hypersurface_absolute}), this can be achieved if we take enough hyperplanes at the cost of bounding the $\deg D_a$.
Since for a perverse sheaf $\cK$, the multiplicities of $CC(\cK)$ are positive integers \cite[Proposition 5.14]{cc}, it is enough to bound the total degree $\totdeg_i(CC(\cK))$.
To carry out this program, we provide the following  cohomological interpretation of the total degree : 

\begin{thm}[\cref{chiXL}]\label{thm_7}
Let $X$ be a smooth projective scheme of pure dimension over an algebraically closed field $k$.
Let $i :X\hookrightarrow \bP_k$ be a closed immersion satisfying \cref{relative_beilinson_assumption}.
Let $\Lambda$ be a finite field of characteristic $\ell \neq p$.
Then, for every $\cK\in D_{c}^{b}(X,\Lambda)$, for every sufficiently generic hyperplane $H\subset \bP_k$ and every  sufficiently generic pencil $L\subset \bP_k^{\vee}$, we have
$$
\totdeg_i(CC(i_*\cK)) =\chi(X_L, \cK|_{X_L})  -2 \cdot \chi(X\cap H, \cK|_{X\cap H} ) \ .
$$
where $X_L$ is the total space of the pencil $L$.
\end{thm}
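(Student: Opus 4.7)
The plan is to apply Saito's Milnor formula to the projection from the total space of the pencil $L$. Write $\wt{\bP}_k := \bP_k \times_{\bP_k^\vee} L$ for the incidence variety of the pencil, with the two projections $\tilde\pi : \wt{\bP}_k \to \bP_k$ and $f : \wt{\bP}_k \to L \cong \bP^1$. For sufficiently generic $L$, the map $\tilde\pi$ is the blow-up of $\bP_k$ along the codimension-two base locus $B$ of $L$, each fiber of $f$ is sent isomorphically to a hyperplane $H \in L$ by $\tilde\pi$, and $\tilde\pi^{-1}(X) = X_L$. Setting $\cG := \tilde\pi^* i_* \cK$, which is supported on $X_L$ with generic $f$-fiber $X \cap H$, the index/Milnor formula applied to $f$ should yield
$$
\chi(X_L, \cK|_{X_L}) - 2 \cdot \chi(X\cap H, \cK|_{X \cap H}) = \sum_{w} (CC(\cG), df)_{T^*\wt{\bP}_k, w},
$$
the factor $2$ being $\chi(L)$ and the sum ranging over the finitely many critical points of $f$ relative to $CC(\cG)$; each local intersection equals $-\dimtot(R\Phi_f \cG)_w$ by the Milnor formula.

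To identify the critical points, I would use the tautological isomorphism $\bP(\bT^*\bP_k) \simeq \{(x,H) \in \bP_k \times \bP_k^\vee : x \in H\}$, which presents $\wt{\bP}_k$ as the preimage of $L$ under $p^\vee$ and shows $f$ is induced by $p^\vee$. A point $(x,H) \in \wt{\bP}_k$ is critical precisely when $(x,H)$ lies in $\bigsqcup_a \bP(C_a)$. For a sufficiently generic pencil, $L$ meets each $D_a$ transversally in $\deg(D_a)$ smooth points lying in the locus where $\bP(C_a) \to D_a$ is radicial, and the fiber of this generically radicial map of degree $d_a$ above each such point contributes $d_a$ critical points (counted with inseparable multiplicity). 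This yields $\sum_a d_a \deg(D_a)$ critical points in total, all away from $B$, where $\tilde\pi$ is an isomorphism.

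At each critical point $(x,H)$ coming from $C_a$, the non-characteristic pullback along $\tilde\pi$ gives $CC(\cG) = m_a \cdot \tilde\pi^*[C_a]$ locally, and the transversality of $L$ with $D_a$ forces $df$ to meet this cycle with local intersection multiplicity $\pm m_a$. Summing all contributions yields $\pm \sum_a m_a d_a \deg(D_a) = \totdeg_i(CC(i_*\cK))$, the sign $(-1)^{n-m}$ built into $\totdeg_i$ being the one accumulated by Saito's conventions. The hardest step is this last local analysis: verifying rigorously that each local intersection evaluates to $m_a$ with the correct sign, while correctly handling the inseparable factor $d_a$ in positive characteristic. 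The preliminary genericity reductions (smoothness of $X_L$, transversality of $L$ with each $D_a$, avoidance of the singular loci of the maps $\bP(C_a) \to D_a$, non-characteristicity of $\tilde\pi$ with respect to $SS(i_*\cK)$) should follow from standard Bertini-type arguments.
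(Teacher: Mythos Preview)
Your strategy matches the paper's: pull $i_*\cK$ back to the pencil $\bP_L$, apply the Milnor formula to the projection $p_L^\vee : \bP_L \to L$, and sum the local contributions. The paper organizes the global step slightly differently---it computes $CC(Rp_{L*}^\vee \pi^* i_*\cK)$ on the curve $L$ and then applies the index formula \cref{CC_and_chi} there---but this is equivalent to what you wrote.

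Your accounting of the inseparable degree $d_a$, however, is wrong in a way that happens to self-correct. A radicial map is universally injective, so over each of the $\deg D_a$ points of $D_a \cap L$ there is \emph{exactly one} set-theoretic point of $\bP(i_\circ C_a)$, not $d_a$ of them; this is \cref{generic_L}(2) in the paper. The factor $d_a$ enters instead through the local intersection number: at the unique critical point $(x,H)$ above $H \in D_a \cap L$ one has
\[
(CC(\pi^*i_*\cK),\, dp_L^\vee)_{\bT^*\bP_L,(x,H)} \;=\; (-1)^{n-m}\, m_a \cdot (i_\circ C_a,\, dp_L^{\vee\circ})_{\bT^*\bP, x} \;=\; (-1)^{n-m}\, m_a\, d_a,
\]
the last equality being \cite[Lemma~5.4]{cc}. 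Transversality of $L$ with $D_a$ alone does \emph{not} force $([i_\circ C_a], df)_x = 1$; the inseparability of $\bP(i_\circ C_a) \to D_a$ is exactly what makes this intersection number $d_a$ rather than $1$. So there are $\deg D_a$ critical points from $C_a$, each contributing $(-1)^{n-m} m_a d_a$, not $d_a \deg D_a$ points each contributing $\pm m_a$. You correctly flagged this local computation as the crux; the missing ingredient is Saito's lemma, not a point count.
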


Hence, \cref{thm_7} \textit{translates the problem of bounding the total degree into the problem of bounding some Euler-Poincaré characteristics}.
That this latter boundedness holds under rank and ramification boundedness conditions follows from the main result of \cite{HuTeyssierMinkowski}.\medskip

To take a full advantage of it, one needs to bound the wild ramification not only for the extension by 0 of locally constant constructible sheaves but for arbitrary constructible complexes.
To this end, effective Cartier divisors are insufficient as some wild ramification may hide in codimension $>1$.
To solve this problem, we introduced in \cite{HuTeyssierCohBoundedness} some ramification boundedness using coherent sheaves instead of effective Cartier divisors. 
If $\bQ[\Coh(X)]$ is the free $\bQ$-vector space on the set of isomorphism classes of coherent sheaves on a scheme of finite type $X$ over a field $k$, we set the following 

\begin{defin}\label{defin_E_bound}
For $\cE\in \bQ[\Coh(X)]$, we say that $\cK\in D_{c}^{b}(X,\Lambda)$  has \textit{log conductors bounded by $\cE$} if for every $i\in \bZ$, every morphism $f : C\to X$ where $C$ is a smooth curve over $k$ and every $x\in C$, the logarithmic conductor (see \cref{local_fields}) of $\cH^i\cK|_C$ at $x$ is smaller than the length of the torsion part of $(f^*\cE)_x$ viewed as a module over $\cO_{C,x}$.
\end{defin}

By combining \cite[Corollary 7.30]{HuTeyssierMinkowski} with \cref{thm_7}, we get the following

\begin{thm}[\cref{Bounddmdeg_normal}]\label{thm_8} 
In the setting of \cref{thm_7}, let $\Sigma$ be a stratification of $X$ and let $a\leq b$ be integers.
Then, there is a function $\fc : \bQ[\Coh(X)]\to \bQ$ and  $\fd\in \mathds{N}[x]$ of degree $\dim X$ such that for every finite field $\Lambda$ of  characteristic $\ell \neq p$, every $\cE\in \bQ[\Coh(X)]$ and every $\cK\in D_{\Sigma}^{[a,b]}(X,\cE,\Lambda)$,  we have
$$
|\totdeg_i(CC(i_*\cK)) | \leq \fd(\fc(\cE)) \cdot \Rk_{\Lambda} \cK \ .
$$
In particular, if $\cK$ is perverse, the number of irreducible components of $SS(\cK)$, the multiplicities of $CC(\cK)$ and the degrees of the $D_a$ are smaller than $\fd(\fc(\cE)) \cdot \Rk_{\Lambda} \cK$.
\end{thm}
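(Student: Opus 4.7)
The plan is to combine the cohomological formula of \cref{thm_7} with the Euler--Poincaré boundedness of \cite[Corollary 7.30]{HuTeyssierMinkowski}. For a sufficiently generic hyperplane $H\subset \bP_k$ and a sufficiently generic pencil $L\subset \bP_k^{\vee}$, \cref{thm_7} yields
$$
\totdeg_i(CC(i_*\cK)) = \chi(X_L, \cK|_{X_L}) - 2\,\chi(X\cap H, \cK|_{X\cap H}),
$$
so the problem reduces to bounding each of the two Euler characteristics uniformly in $\cE$ and linearly in $\Rk_{\Lambda}\cK$.

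For the hyperplane section $X\cap H$, the restriction $\cK|_{X\cap H}$ inherits a stratification refining the trace of $\Sigma$ on $X\cap H$, and its log conductors are controlled by the pullback of $\cE$ along $X\cap H \hookrightarrow X$; indeed, log conductors are tested on smooth curves and any curve $C\to X\cap H$ is a curve $C\to X$. For the total space $X_L$, which is the blowup of $X$ along the base locus of the pencil, the proper birational morphism $X_L \to X$ allows us to transport the coherent bound: any smooth curve in $X_L$ either maps birationally to a curve in $X$ or is contained in the exceptional divisor, and in both cases one can control the log conductors of $\cK|_{X_L}$ by a coherent datum $\cE'$ built out of $\cE$ and its pullback to $X_L$. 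Applying \cite[Corollary 7.30]{HuTeyssierMinkowski} to each term then produces bounds polynomial in invariants of $\cE$ (resp.\ $\cE'$) and linear in $\Rk_{\Lambda}\cK$, of total degree at most $\dim X$. Folding everything into a single function $\fc : \bQ[\Coh(X)]\to \bQ$ and a polynomial $\fd \in \bN[x]$ of degree $\dim X$ yields the claimed estimate.

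The main obstacle will be defining $\cE'$ on $X_L$ in a way that genuinely depends on $\cE$ alone, independently of $\cK$, while still ensuring that $\cK|_{X_L}$ has log conductors bounded by $\cE'$ in the sense of \cref{defin_E_bound}. This requires a careful analysis of the behaviour of log conductors along the exceptional divisor for curves contracted by the blowup, and of the compatibility of the generic position conditions of \cref{thm_7} with the fixed stratification $\Sigma$, so that the bounds for the two Euler characteristics can be derived from a single coherent input. The final \emph{in particular} statement then follows from the positivity of characteristic cycle multiplicities for perverse sheaves \cite[Lemma 5.14]{cc}: each summand $d_a \cdot m_a \cdot \deg D_a$ appearing in $|\totdeg_i(CC(\cK))|$ is a non-negative integer, and is therefore dominated by the total degree itself.
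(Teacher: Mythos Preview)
Your overall strategy—reduce to Euler characteristic bounds via \cref{thm_7} and then invoke \cite[Corollary~7.30]{HuTeyssierMinkowski}—is exactly right, and your treatment of the \emph{in particular} clause via \cite[Lemma~5.14]{cc} is fine. However, there is a genuine gap in the execution.

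The issue is that in \cref{thm_7} the hyperplane $H$ and the pencil $L$ are required to be \emph{sufficiently generic for $\cK$}: the open locus of good $H$'s and $L$'s depends on $SS(i_*\cK)$. So you cannot first fix $H$ and $L$, then apply the boundedness result on the fixed varieties $X\cap H$ and $X_L$ uniformly over all $\cK$. Your acknowledged difficulty—building a coherent bound $\cE'$ on $X_L$ that depends only on $\cE$ and not on $\cK$—is a symptom of this, and your proposed analysis of curves in the exceptional divisor would not resolve it, because $L$ itself moves with $\cK$.

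The paper dissolves this circularity by never choosing $H$ or $L$. Instead it observes (\cref{genericXs}) that $\chi(X\cap H,\cK|_{X\cap H})$ for generic $H$ equals $\chi(X_{\overline{\eta}},\cK|_{X_{\overline{\eta}}})$, the Euler characteristic over the \emph{generic geometric point} $\overline{\eta}$ of $\bP_k^{\vee}$, and similarly $\chi(X_L,\cK|_{X_L})=\chi(X_{\overline{\xi}},\cK|_{X_{\overline{\xi}}})$ for the generic point $\overline{\xi}$ of the Grassmannian of lines. One then applies \cite[Corollary~7.30]{HuTeyssierMinkowski} in its \emph{relative} form to the two universal proper families
\[
p_X^{\vee}: X\times_{\bP_k}Q \longrightarrow \bP_k^{\vee}
\quad\text{and}\quad
q_X^{\vee}: (X\times_{\bP_k}Q)\times_{\bP_k^{\vee}}F \longrightarrow \bG_k(E^{\vee},2),
\]
endowed with the pullback stratifications $p_X^*\Sigma$ and $q_X^*\Sigma$. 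The coherent bound is simply the pullback $p_X^*\cE$ (respectively $q_X^*\cE$), which manifestly depends only on $\cE$; and the relative statement gives the required estimate at \emph{every} geometric point of the base, in particular at $\overline{\eta}$ and $\overline{\xi}$. Setting $\fc:=\fc_1\circ p_X^* + \fc_2\circ q_X^*$ and $\fd:=\fd_1+\fd_2$ finishes the proof.
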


Hence, if we fix a stratification and bound the rank and the wild ramification,  there is a universal bound  for the degrees of the divisors $p^{\vee}(SS(i_*\cK))\subset \bP_k^{\vee}$,  which we saw is what is needed to prove \cref{thm_5}.

\subsection*{Acknowledgement}
We thank A. Abbes,  M. D'Addezio,  H. Esnault, M. Kerz, D. Litt and T. Saito for their interest and their comments.
We thank P. Deligne for his comments and for indicating us a mistake in an earlier version following an inaccuracy in \cite{ES}\footnote{See \url{https://page.mi.fu-berlin.de/esnault/preprints/helene/138b_small_erratum.pdf} for a correction.}.
We thank an anonymous referee for a thorough reading of this manuscript and for many suggestions that have improved readability.
H. H. is supported by the National
Natural Science Foundation of China (Grants No. 12471012) and the Natural Science Foundation of Jiangsu Province (Grant No. BK20231539).
J.-B. Teyssier is supported by the Institut Universitaire de France (IUF).


\subsection*{Linear Overview}
Section 2 provides an account of Abbes and Saito theory for the logarithmic conductor.
In section 3,  we introduce Beilinson's singular support and Saito's characteristic cycle for étale sheaves and recall some terminology from \cite{HuTeyssierCohBoundedness} to state the cohomological boundedness result from  \cite{HuTeyssierMinkowski} we need.
In section 4,  we give the cohomological interpretation of the total degree for the characteristic cycle (\cref{thm_7}) and deduce \cref{thm_8}.
A consequence is the existence of some dense open set of hyperplanes achieving transversality for perverse sheaves with bounded rank and log conductors (\cref{micro_local_Lefschetz_hyperplane}).
In section 5, we give a criterion for the restriction to a hyperplane section to distinguish two sheaves in terms of the singular support of their $\cHom$-sheaf (\cref{transversality_and_Hom}).
The wild Lefschetz theorem for finite coefficients  (\cref{thm_5}) is then deduced.
Section 6  provides the basic language for moduli of multi-flags used to formulate Lefschetz type results  in codimension $>1$,  thus giving some flexibility for future applications.
In section 7, we derive the existence of some dense open subset of a moduli of multi-flags whose points achieve simultaneous recognition for all pair of perverse sheaves with bounded rank and log conductors (\cref{Lefschetz_recognition}) and extend \cref{thm_5} and \cref{thm_8}  in higher codimension. See \cref{Wild_Lefschetz} and \cref{micro_local_Lefschetz} respectively.

\subsection*{Relation with \cite{HuTeyssierSemicontinuity,HuTeyssierCohBoundedness,HuTeyssierMinkowski}}
The present paper is the last of a series of four papers.
Its goal is to derive three more applications  from \cite{HuTeyssierMinkowski}, namely \cref{Bounddmdeg_normal}, \cref{Lefschetz_recognition} and \cref{Wild_Lefschetz}.
It can be read pretty much independently from \cite{HuTeyssierSemicontinuity,HuTeyssierCohBoundedness,HuTeyssierMinkowski}.
Indeed on the one hand, it relies on  \cite{HuTeyssierCohBoundedness} only via \cref{bounded_conductor}, which is needed to formulate the results for perverse sheaves instead of local systems.
On the other hand,  \cite{HuTeyssierMinkowski} is used as a black-box via one single result,  namely \cref{general_boundedness_complex}.

\begin{notation}
We introduce the following running notations.
\begin{enumerate}\itemsep=0.2cm
\item[$\bullet$] $k$  denotes a perfect field of characteristic $p>0$.
\item[$\bullet$] The letter $\Lambda$ will refer to a finite local ring of residue characteristic $\ell \neq p$.

\item[$\bullet$] For a scheme $X$ of finite type over $k$, we denote by $D_{ctf}^b(X,\Lambda)$ the derived category of complexes of $\Lambda$-sheaves of finite tor-dimension with bounded and constructible cohomology sheaves.

\item[$\bullet$]
 $\LC_{tf}(X,\Lambda)$ will denote the category of locally constant constructible sheaves of $\Lambda$-modules of finite tor-dimension over $X$. 
By \cite[Lemma 4.4.14]{Wei}, the germs of any $\cL\in \LC_{tf}(X,\Lambda)$ are automatically free  $\Lambda$-modules of finite rank.
 
 \item[$\bullet$]
 $\Perv_{tf}(X,\Lambda)\subset D_{ctf}^b(X,\Lambda)$ will denote the category of perverse sheaves of $\Lambda$-modules of finite tor-dimension over $X$ for the middle perversity function \cite[§2.2]{BBD}.
Recall that if $\Lambda_0$  is the residue field of $\Lambda$, an object $\cK \in D_{ctf}^b(X,\Lambda)$ lies in  $\Perv_{tf}(X,\Lambda)$ if and only if $\Lambda_0 \otimes_{\Lambda} \cK$ is a perverse sheaf.

\item[$\bullet$]  Let  $X$  be a scheme of finite type over $k$ and let $\Lambda$ be a field of characteristic $\neq p$.
For $\cK \in D_{ctf}^b(X,\Lambda)$, we put
$$
\Rk_{\Lambda} \cK := \max\{\rk_{\Lambda}\cH^i \cK_{\xbar}, \text{ where $i\in \bZ$ and $\xbar\to X$ is algebraic geometric}\} \ .
$$
\item[$\bullet$]
For $r\geq 0$, we let $D_{ctf}^{\leq r}(X,\Lambda)\subset D_{ctf}^b(X,\Lambda)$ be the full subcategory spanned by objects $\cK$ such that $\Rk_{\Lambda} \cK\leq r$, and similarly with perverse complexes.

\item[$\bullet$]
For a finite stratification $\Sigma$ of $X$, that is a finite partition of $X$ by locally closed subsets,  we let $D_{\Sigma,tf}^b(X,\Lambda)\subset D_{tf}^b(X,\Lambda)$ be the full subcategory spanned by $\Sigma$-constructible complexes, and similarly with perverse complexes.
\end{enumerate}

\end{notation}

\section{Conductors of étale sheaves}

\subsection{Ramification filtrations}\label{local_fields_notation}

Let $K$ be a henselian discrete valuation field over $k$.
Let $\sO_K$ be the ring of integer of $K$, let $\fm_K$ be the maximal ideal of $\sO_K$ and $F$ the residue field of $\sO_K$. 
Fix $K^{\sep} \supset K$ a separable closure of $K$ and let $G_K$ be the Galois group of $K^{\sep}$ over $K$.
Let  $I_K\subset G_K$ be the inertia subgroup and let  $P_K\subset I_K$ be the wild ramification subgroup.

\begin{recollection}\label{local_fields}

 In \cite{RamImperfect},  Abbes and Saito defined two decreasing filtrations $\{G^r_K\}_{r\in\bQ_{>0}}$ and $\{G^r_{K,\log}\}_{r\in\bQ_{\geq 0}}$ on $G_K$ by closed normal subgroups.   
They are called the {\it the ramification filtration} and {\it the logarithmic ramification filtration} respectively.  
For $r\in\bQ_{\geq 0}$, put
\begin{equation*}
G^{r+}_K=\overline{\bigcup_{s>r}G_K^s}\ \ \ \textrm{and}\ \ \ G^{r+}_{K,\log}=\overline{\bigcup_{s>r}G_{K,\log}^s}.
\end{equation*}

\begin{prop}[{\cite{RamImperfect,as ii,logcc,wr}}]\label{propramfil}
The following properties hold :
\begin{enumerate}
\item
For any $0<r\leq 1$, we have 
$$
G^r_K=G^0_{K,\log}=I_K \text{ and } G^{1+}_K=G^{0+}_{K,\log}=P_K.
$$
\item
For any $r\in \bQ_{\geq 0}$, we have 
$$
G^{r+1}_K\subseteq G^r_{K,\log}\subseteq G^r_K.
$$
If $F$ is perfect, then for any $r\in \bQ_{\geq 0}$, we have 
$$
G^r_{K,\cl}=G^r_{K,\log}=G^{r+1}_K.
$$ 
where $G^r_{K,\cl}$ is the classical wild ramification subgroup as defined in \cite{CL}.
\item
For any $r\in \bQ_{> 0}$, the graded piece $G^r_{K,\log}/G^{r+}_{K,\log}$ is abelian, $p$-torsion and contained in the center of $P_K/G^{r+}_{K,\log}$.
\item
For any $r\in \bQ_{> 1}$, the graded piece $G^r_{K}/G^{r+}_{K}$ is abelian, $p$-torsion and contained in the center of $P_K/G^{r+}_{K}$.
\end{enumerate}
\end{prop}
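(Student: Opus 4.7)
The plan is to treat each of the three items as a recollection from Abbes--Saito theory, since the statement aggregates results already established in the cited references. My proposal is therefore to trace each item to the right source and explain which feature of the construction it reflects, rather than to reprove anything from scratch.

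For item (1), I would first recall that $G_K^r$ and $G_{K,\log}^r$ are defined via geometric connected components of rigid-analytic tubular neighborhoods of thickenings of finite separable $K$-algebras, the logarithmic version using a twist by $\mathfrak{m}_K$. From these definitions, the radius-one (resp.\ log-radius-zero) tube picks up exactly the inertia, giving $G_K^r = I_K$ for $0 < r \leq 1$ and $G_{K,\log}^0 = I_K$; pushing strictly past these thresholds isolates the wild inertia, yielding $G_K^{1+} = G_{K,\log}^{0+} = P_K$. All four equalities are stated explicitly in \cite{RamImperfect}.

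For item (2), the sandwich $G_K^{r+1} \subseteq G_{K,\log}^r \subseteq G_K^r$ should be obtained from the comparison of the two families of rigid tubes: the log tube of log-radius $r$ sits between the non-log tubes of radii $r$ and $r+1$, and taking connected components reverses the inclusions. This is carried out in \cite{as ii}. When $F$ is perfect, one additionally has to identify the Abbes--Saito filtration with the classical upper-numbering filtration, which is done in \cite{as ii} via a Herbrand-function computation; the identification $G^r_{K,\cl}=G^r_{K,\log}=G^{r+1}_K$ is then the translation of this comparison into the logarithmic indexing.

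For item (3), which is the deepest part, I would simply cite \cite{logcc,wr}: the structure theorem that $G^r_{K,\log}/G^{r+}_{K,\log}$ is abelian, $p$-torsion, and central in $P_K/G^{r+}_{K,\log}$ for $r > 0$ is exactly the main output of the characteristic-form theory developed there. The genuine obstacle, were one attempting a self-contained proof, would be this step: it requires the functorial description of graded pieces in terms of differential forms on the residue field and the explicit analysis of refined Swan conductors in the imperfect-residue-field setting. Since the present paper uses (3) only as a black box to invoke ramification filtrations in later sections, the proof reduces to assembling the above references.
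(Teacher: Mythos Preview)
Your proposal is appropriate: the paper itself provides no proof of this proposition at all, treating it purely as a recollection with attribution to \cite{RamImperfect,as ii,logcc,wr}. Your plan to trace each item to its source and sketch the underlying mechanism is therefore already more than the paper does, and is the right thing to do for a result stated without proof.
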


Let $M$ be a finitely generated $\Lambda$-module with a continuous $P_K$-action. 
 The module $M$ has decompositions 
 \begin{equation}\label{twodecomp}
M=\bigoplus_{r\geq 1}M^{(r)}\ \ \ \textrm{and}\ \ \ M=\bigoplus_{r\geq 0}M_{\log}^{(r)}
\end{equation}
into $P_K$-stable $\Lambda$-submodules where $M^{(1)}=M^{(0)}_{\log}=M^{P_K}$, and such that for every $r\in \bQ_{>0}$,
\begin{align*}
(M^{(r+1)})^{G^{r+1}_K}& =0  \text{ and }    (M^{(r+1)})^{G^{(r+1)+}_K} =M^{(r+1)};\\
(M^{(r)}_{\log})^{G^{r}_{K,\log}}& =0  \text{ and }    (M^{(r)}_{\log})^{G^{r+}_{K,\log}}  =M^{(r)}_{\log}.
\end{align*}
The decompositions \eqref{twodecomp} are respectively called  the {\it slope decomposition} and the {\it logarithmic slope decomposition} of $M$. 
The values $r$ for which $M^{(r)}\neq 0$ (resp. $M^{(r)}_{\log}\neq 0$) are the {\it slopes} (resp. the {\it logarithmic slopes}) of $M$.
We denote by $c_K(M)$ the largest slope of $M$ and refer to $c_K(M)$ as the \textit{conductor of $M$}.
Similarly, we denote by $\lc_K(M)$ the largest logarithmic slope of $M$ and refer to $\lc_K(M)$ as the \textit{logarithmic conductor of $M$}.
We say that $M$ is {\it isoclinic} (resp. {\it logarithmic isoclinic}) if $M$ has only one slope (resp. only one logarithmic slope).

The following is an immediate consequence of  \cref{propramfil}-(2).

\begin{lem}\label{inequalityLogNonLog}
 Let $M$ be a finitely generated $\Lambda$-module with a continuous $P_K$-action. 
Then, 
$$
\lc_{K}(M)  \leq c_K(M)\leq  \lc_{K}(M)+1. 
$$
\end{lem}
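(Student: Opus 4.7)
The plan is to unwind the definitions of $c_K(M)$ and $\lc_K(M)$ in terms of invariants under the ramification subgroups and then apply the double inclusion $G^{r+1}_K \subseteq G^r_{K,\log} \subseteq G^r_K$ from \cref{propramfil}-(2) term by term.

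First I would reformulate both conductors by means of invariants. Because of the slope decomposition, $c_K(M)$ is characterised as the infimum of the $r$ for which $M^{G^r_K} = M$; more precisely, $c_K(M) \leq s$ if and only if $M^{G^r_K} = M$ for every $r > s$ (equivalently, $M^{G^{s+}_K} = M$). The analogous statement holds for $\lc_K(M)$ with $G^r_{K,\log}$ in place of $G^r_K$. This is just a reading of the defining conditions $(M^{(r)})^{G^r_K} = 0$, $(M^{(r)})^{G^{r+}_K} = M^{(r)}$ and their logarithmic counterparts.

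Now I would chase the two inclusions of \cref{propramfil}-(2). The inclusion $G^r_{K,\log} \subseteq G^r_K$ gives $M^{G^r_K} \subseteq M^{G^r_{K,\log}}$ for every $r \geq 0$. Thus if $c_K(M) = s$, then for every $r > s$ we have $M = M^{G^r_K} \subseteq M^{G^r_{K,\log}}$, so $M^{G^r_{K,\log}} = M$; taking the infimum over such $r$ yields $\lc_K(M) \leq s = c_K(M)$. Symmetrically, the inclusion $G^{r+1}_K \subseteq G^r_{K,\log}$ gives $M^{G^r_{K,\log}} \subseteq M^{G^{r+1}_K}$. If $\lc_K(M) = s$, then for every $r > s$ we obtain $M^{G^{r+1}_K} = M$, hence $c_K(M) \leq r+1$; letting $r \downarrow s$ gives $c_K(M) \leq s+1 = \lc_K(M) + 1$.

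There is no real obstacle; the only point one must be slightly careful about is the boundary case of small slopes (the non-logarithmic filtration has $G^r_K = I_K$ for $0 < r \leq 1$ by \cref{propramfil}-(1)), but the argument above only uses the invariants directly and the inclusions between the two filtrations, so it applies uniformly for all $r \geq 0$ without a case split.
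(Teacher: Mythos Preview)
Your argument is correct and is exactly the unwinding of what the paper means when it says the lemma is ``an immediate consequence of \cref{propramfil}-(2)''. There is no separate proof in the paper beyond that sentence, so your approach coincides with the intended one.
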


If $M$ is free as a $\Lambda$-module, then so are the $M^{(r)}_{\log}$ and the $M^{(r)}$ in virtue of \cite[Lemma 1.5]{KatzGauss}.
In that case, the {\it total dimension} of $M$ is defined by
$$
\dt_K(M):=\sum_{r\geq 1}r\cdot\rk_{\Lambda} M^{(r)}
$$
and the {\it Swan conductor} of $M$ is defined by
$$
\sw_K(M):=\sum_{r\geq 0}r\cdot\rk_{\Lambda} M^{(r)}_{\log} \ .
$$

\begin{lem}[{\cite{RamImperfect}}]\label{inequality_Swan_dimtot}
In the setting of \cref{local_fields}, we have
$$
 \sw_K(M) \leq  \dt_K(M) \leq \sw_K(M)+\rk_{\Lambda}M \ .
$$
If the residue field $F$ is perfect, we have
\begin{align*}
\lc_{K}(M)+1&=c_K(M) \ .  \\
\sw_K(M)+\rk_{\Lambda}M&= \dt_K(M) \ .
\end{align*}
\end{lem}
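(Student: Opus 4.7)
The approach I would take is to combine the slope and logarithmic slope decompositions of $M$ into a single bigraded decomposition, and then use \cref{propramfil}(2) to constrain which bigraded pieces can be non-zero. The pointwise inclusion $G^{r+1}_K \subseteq G^r_{K,\log} \subseteq G^r_K$ upgrades, by taking closures over $s > r$, to $G^{(r+1)+}_K \subseteq G^{r+}_{K,\log} \subseteq G^{r+}_K$. Setting $F^{\leq r} M := M^{G^{r+}_K}$ and $F^{\leq r}_{\log} M := M^{G^{r+}_{K,\log}}$, the characterizing properties of the decompositions in \eqref{twodecomp} identify $F^{\leq r} M$ with $\bigoplus_{r' \leq r} M^{(r')}$ and $F^{\leq r}_{\log} M$ with $\bigoplus_{s' \leq r} M^{(s')}_{\log}$, so the chain of subgroups translates into the chain of filtrations
\begin{equation*}
F^{\leq r} M \subseteq F^{\leq r}_{\log} M \subseteq F^{\leq r+1} M.
\end{equation*}

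My next step would be to show that $M^{(r, s)} := M^{(r)} \cap M^{(s)}_{\log}$ defines a bigrading $M = \bigoplus_{r,s} M^{(r,s)}$. This follows from the uniqueness of the logarithmic slope decomposition applied to the $P_K$-stable submodule $M^{(r)}$ of $M$, which must coincide with $\bigoplus_{s} M^{(r)} \cap M^{(s)}_{\log}$ by comparing with the filtration $F^{\leq \bullet}_{\log}$ restricted to $M^{(r)}$. The filtration chain above then forces $M^{(r,s)} = 0$ unless $s \in \{r-1, r\}$, since an element of non-log slope $r$ has log slope at most $r$ by the first inclusion, while an element of log slope $s$ has non-log slope at most $s + 1$ by the second.

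The two inequalities then reduce to the elementary identity
\begin{equation*}
\dt_K(M) - \sw_K(M) = \sum_{r, s}(r - s) \rk_\Lambda M^{(r, s)} = \sum_{r \geq 1} \rk_\Lambda M^{(r, r-1)},
\end{equation*}
which lies between $0$ and $\rk_\Lambda M = \sum_{r, s} \rk_\Lambda M^{(r, s)}$. When $F$ is perfect, \cref{propramfil}(2) upgrades the second inclusion to an equality $F^{\leq r}_{\log} M = F^{\leq r+1} M$, so the pieces $M^{(r, r)}$ vanish and only the $M^{(r, r-1)}$ survive; this yields both $\dt_K(M) = \sw_K(M) + \rk_\Lambda M$ and a bijection between non-log slopes $r \geq 1$ and log slopes $r - 1 \geq 0$, hence $c_K(M) = \lc_K(M) + 1$. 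The main subtlety of this plan is the compatibility of the two decompositions into a common bigrading; the remaining steps are formal bookkeeping.
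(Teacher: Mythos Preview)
The paper does not supply a proof of this lemma; it is simply attributed to \cite{RamImperfect}. Your bigrading strategy is sound and yields the result, but there is one slip that needs repair.

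The constraint you extract from the chain $F^{\leq r} M \subseteq F^{\leq r}_{\log} M \subseteq F^{\leq r+1} M$ is $r-1 \leq s \leq r$, not $s \in \{r-1, r\}$: slopes are rational, and nothing in \cref{propramfil}(2) forces $s$ to be an endpoint of this interval in the imperfect case. Consequently your displayed identity
\[
\dt_K(M) - \sw_K(M) = \sum_{r \geq 1} \rk_\Lambda M^{(r, r-1)}
\]
is not justified. What you do have is $\dt_K(M) - \sw_K(M) = \sum_{r,s} (r-s)\,\rk_\Lambda M^{(r,s)}$ with each coefficient $r-s \in [0,1]$, and this already gives $0 \leq \dt_K(M) - \sw_K(M) \leq \rk_\Lambda M$, which is all you need.

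Your treatment of the perfect case is correct as written: the equality $G^r_{K,\log} = G^{r+1}_K$ for every rational $r \geq 0$ passes to the $(+)$-versions and gives $F^{\leq s}_{\log} M = F^{\leq s+1} M$ for every rational $s \geq 0$. Applying this for all $s < r-1$ shows $M^{(r)} \cap F^{\leq s}_{\log} M = M^{(r)} \cap F^{\leq s+1} M = 0$, while $s = r-1$ gives $M^{(r)} \subseteq F^{\leq r-1}_{\log} M$; together these force the log slope of $M^{(r)}$ to be exactly $r-1$, so only the pieces $M^{(r,r-1)}$ survive and both equalities follow. The compatibility of the two decompositions into a common bigrading, which you flag as the main subtlety, follows cleanly from the additivity of slope decompositions under direct sums applied to $M = \bigoplus_r M^{(r)}$.
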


\end{recollection}

\subsection{Conductor divisors}\label{semi_continuity_conductors}\label{semi_continuity_section}
Let $X$ be a normal scheme of finite type over $k$.
Let $Z$ be an integral Weil divisor and let $\eta\in Z$ be its generic point.
Let $K$ be the fraction field of $\hat{\mathcal{O}}_{X,\eta}$ and fix a separable closure $K^{\sep}$ of $K$.
For  $\cF\in \Cons(X,\Lambda)$, the pull-back $\cF|_{\Spec K}$ is a $\Lambda$-module of finite type with continuous $G_{K}$-action.
Using the notations from \cref{local_fields_notation}, we put
$$
c_{Z}(\cF):= c_{K}(\cF|_{\Spec K})  \text{ and }
 \lc_{Z}(\cF):= \lc_{K}(\cF|_{\Spec K})  \ .
$$

\begin{definition}\label{def_LC}
Let $X$ be a normal scheme of finite type over $k$ and let $\cF\in \Cons_{tf}(X,\Lambda)$.
We define the \textit{conductor divisor  of $\cF$} as the Weil divisor with rational coefficients given by
$$
C_X(\cF):=\sum_{Z}  c_{Z}(\cF) \cdot  Z
$$
and the \textit{logarithmic conductor divisor  of  $\cF$} as the Weil divisor with rational coefficients given by
$$
LC_X(\cF):=\sum_{Z} \lc_{Z}(\cF) \cdot  Z
$$
where the sums run over the set of integral Weil divisors of $X$.
\end{definition}

\begin{rem}
The above divisors are $\mathds{Q}$-Weil divisors of $X$.
We will sometimes abuse the notations and write $C(\cF)$ instead of $C_X(\cF)$ and similarly in the logarithmic case.
\end{rem}

\begin{definition}
In the setting of \cref{semi_continuity_section} and for every Weil divisor $D$,  we define the \textit{generic conductor} and the \textit{generic logarithmic conductor of $\cL$ along $D$} respectively by
$$
c_D(\cL):= \max_{Z} c_{Z}(\cL) \text{ and }
 \lc_D(\cL):= \max_{Z}  \lc_{Z}(\cL)   \ .
$$
where $Z$ runs over the set of irreducible components of $D$.
\end{definition}

\section{Singular support and characteristic cycle of étale sheaves}

\subsection{The singular support}\label{Sing_support_setting}
Let $X$ be a smooth  scheme of finite type over $k$. 
We denote by $\bT^*X$ the cotangent bundle of $X$.
Let $C\subset \bT^*X$ be a closed conical subset. 
For a point $x\in X$, we put $\bT^*_{x}X=\bT^*X\times_X x$   and $C_{ x}=C\times_X x$.  

\begin{recollection}
Let $h:U\to X$ be a morphism of smooth schemes of finite type over $k$. 
For $u\in U$, we say that $h:U \to X$ is $C$-{\it transversal at $u$} if 
$$
\ker dh_{u} \bigcap C_{h(u)}\subseteq \{0\}\subseteq
\bT^*_{h(u)}X 
$$ 
where $dh_{u}:\bT^*_{h(u)}X\to  \bT^*_{u}U$ is the cotangent map of $h$ at $u$. 
We say that $h:U \to X$ is \textit{$C$-transversal} if it is $C$-transversal at every point of $U$. 
For a $C$-transversal morphism $h:U\to X$, we let $h^\circ C$ be the scheme theoretic image of $C\times_XU$ in $\bT^*U$ by  $dh:\bT^*X\times_XU \to \bT^*U$.\\ \indent
Let $f:X\to Y$ be a morphism of smooth schemes of finite type over $k$.  
For $x\in X$, we say that $f:X \to  Y$ is $C$-{\it transversal at} $x$ if 
$$
df_{x}^{-1}(C_{x})\subseteq \{0\}\subseteq \bT^*_{f(x)}Y
$$
We say that $f:X\to Y$ is $C$-{\it transversal } if it is $C$-transversal at every point of $X$.\\ \indent
Let $(h,f):Y\leftarrow U\to X$ be a pair of morphisms of between smooth schemes of finite type over $k$. 
We say that $(h,f)$ is $C$-{\it transversal} if $h:U \to X$ is $C$-transversal and if $f:U \to  Y$ is $h^\circ C$-transversal. 
\end{recollection}


\begin{definition}
In the setting of \cref{Sing_support_setting}, we say that $\cK\in D^b_c(X,\Lambda)$ is {\it micro-supported on} $C$ if for every $C$-transversal pair $(h,f):Y\leftarrow U \to   X$, the map $f:U \to  Y$ is universally locally acyclic with respect to $h^*\cK$.
\end{definition}

\begin{theorem}[{\cite[Theorem 1.3]{bei}}]\label{beilinson_theorem}
For every  $\cK\in D^b_c(X,\Lambda)$, there is a smallest closed conical subset $SS(\cK)\subset \bT^*X$ on which $\cK$ is micro-supported. 
Furthermore, if $X$ has pure dimension $n$, then $SS(\cK)$ has pure dimension $n$.
\end{theorem}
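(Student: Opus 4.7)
The plan decomposes into two halves: existence of a smallest $SS(\cK)$, and the pure-dimension statement.

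For existence, I would combine non-emptiness of the family of closed conical subsets on which $\cK$ is micro-supported with stability of that family under finite intersection, then conclude by the descending chain condition in $\bT^*X$. For non-emptiness, given any stratification $\Sigma$ of $X$ for which $\cK$ is $\Sigma$-constructible, the union $C_\Sigma$ of conormal bundles of the strata is a closed conical subset on which $\cK$ is micro-supported; the verification reduces, via Deligne's generic universal local acyclicity, to a stratum-by-stratum computation exploiting $C_\Sigma$-transversality of $h$ to transport the stratification to $U$. The intersection-stability step---if $\cK$ is micro-supported on $C_1$ and on $C_2$, then also on $C_1\cap C_2$---is the heart of the matter; I would tackle it by embedding a $(C_1\cap C_2)$-transversal test pair $(h,f):Y\leftarrow U\to X$ into a one-parameter family of test pairs each of which is either $C_1$- or $C_2$-transversal, and transporting universal local acyclicity from the endpoints to the central pair via a continuity property of nearby cycles.

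For the dimension statement, the lower bound $\dim SS(\cK)\geq n$ over the support of $\cK$ can be handled as a Kashiwara-type estimate: a component of $SS(\cK)$ of dimension strictly less than $n$ would permit a local trivialisation of $\cK$ in a generic cotangent direction, contradicting the minimality of $SS(\cK)$ already obtained. The upper bound $\dim SS(\cK)\leq n$ is the more delicate half; here I would reduce to the case where $X$ is embedded in a projective space and run a Radon-transform argument: an excess-dimensional component of $SS(\cK)$ would violate a cohomological finiteness estimate for the restrictions $\cK|_{X\cap H}$ dictated by the Künneth formula as $H$ varies in a generic pencil of hyperplanes.

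The principal obstacle is closure of micro-support under finite intersection. All the technical weight of the theorem lies there: the deformation and continuity arguments go well beyond standard manipulations of vanishing cycles and rely on propagation of universal local acyclicity in families, itself a substantial application of Deligne's finiteness theorems.
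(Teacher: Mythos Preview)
The paper does not give a proof of this statement: it is quoted verbatim from Beilinson \cite[Theorem~1.3]{bei} and used as a black box. There is therefore nothing in the present paper to compare your proposal against.

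That said, a brief remark on your outline relative to Beilinson's actual argument in \cite{bei}. Your decomposition into non-emptiness, intersection-stability, and noetherian descent is reasonable, and the observation that $C_\Sigma$ works for non-emptiness is correct. However, the step you flag as the principal obstacle---intersection stability via a one-parameter deformation of test pairs and continuity of nearby cycles---is not how Beilinson proceeds, and it is not clear your sketch can be made to work: there is no general mechanism for interpolating between a $C_1$-transversal and a $C_2$-transversal test pair through pairs that remain transversal to something useful, nor does universal local acyclicity propagate along such families without further input. Beilinson's route is quite different: he first constructs, via the Radon transform on projective space and a Legendre-type correspondence, a candidate closed conical set (the ``weak singular support'') and proves directly that it has pure dimension $n$; he then shows this candidate is both an upper and lower bound for any $C$ on which $\cK$ is micro-supported. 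Existence of the minimum and the dimension statement thus come out together from the Radon argument, rather than the dimension bound being a separate afterthought as in your plan. Your lower-bound sketch (``Kashiwara-type estimate'') is too vague to assess, and your upper-bound sketch via pencils is in the right spirit but would need the Radon machinery anyway.
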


\begin{definition}
The closed conical subset $SS(\cK)$ is the \textit{singular support of $\cK$}. 
\end{definition}

\subsection{Base change and transversality}
Let $f:Y\to X$ be a separated morphism of smooth schemes of finite over $k$.
Following \cite[§ 8.2]{cc}, for every $\cK\in D^b_c(X,\Lambda)$, there is a canonical morphism
\begin{equation}\label{Ftransversality}
h_{f,\cK}: f^*\cK\otimes_{\Lambda}^L Rf^!\Lambda\to Rf^!\cK
\end{equation}
obtained by adjunction from the composition
\begin{equation*}
Rf_!(f^*\cK\otimes^L_{\Lambda}Rf^!\Lambda)\xrightarrow{\sim} \cK\otimes^L_{\Lambda}Rf_!Rf^!\Lambda\to\cK
\end{equation*}
where the first arrow is the projection formula and where the second arrow is induced by the adjunction $Rf_!Rf^!\Lambda\to  \Lambda$.
Let us recall the following 

\begin{prop}[{\cite[Proposition 8.13]{cc}}]\label{upper_shriek_transversality}
If $\cK$ is of finite tor-dimension and if $f:Y\to X$ is $SS(\cK)$-transversal, then the canonical morphism \eqref{Ftransversality} is an isomorphism.
\end{prop}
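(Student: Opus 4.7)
The plan is to reduce to the case where $f$ is a regular closed immersion of codimension one between smooth schemes, and then to deduce the statement from Beilinson's definition of the singular support via a local acyclicity argument for a defining equation.

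First, I would factor $f$ through its graph: $f = p \circ \gamma_f$, where $\gamma_f : Y \hookrightarrow Y \times_k X$ is the graph embedding and $p : Y \times_k X \to X$ is the projection. Since $p$ is smooth, $Rp^!\Lambda$ is invertible (concentrated in one degree by Poincar\'e duality for smooth morphisms), and the transversality morphism $h_{p, \cK}$ is an isomorphism for every $\cK$ by smooth base change. Using compatibility of $h_{-,-}$ with composition together with the pullback formula $SS(p^*\cK) = p^{\circ} SS(\cK)$ for smooth morphisms, the hypothesis is preserved upon replacing $\cK$ by $p^*\cK$, so we may assume $f$ is a closed immersion of smooth schemes.

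Next, I would locally factor $f$ as a chain of smooth divisor embeddings $Y = Y_c \hookrightarrow Y_{c-1} \hookrightarrow \cdots \hookrightarrow Y_0 = X$ via a regular system of parameters. At each step, transversality of the intermediate embedding with respect to the singular support of the restricted sheaf follows from the hypothesis, using that transversal pullback is compatible with $SS$. By induction on codimension, it suffices to treat a smooth divisor embedding $i : Y \hookrightarrow X$ that is $SS(\cK)$-transversal. In this case, absolute purity gives $Ri^!\Lambda = \Lambda_Y[-2](-1)$, so the map in question takes the form
\[
i^*\cK[-2](-1) \longrightarrow Ri^!\cK \ .
\]

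Finally, I would verify this is an isomorphism stalkwise near a point $y \in Y$. After shrinking, choose a local equation $g : U \to \mathbb{A}^1_k$ for $Y$ in a neighborhood $U$ of $f(y)$ in $X$. The pair $(\id_U, g) : X \xleftarrow{\id_U} U \xrightarrow{g} \mathbb{A}^1_k$ is $SS(\cK)$-transversal by the assumption on $i$, so the defining property of $SS$ implies that $g$ is universally locally acyclic with respect to $\cK|_U$, i.e.\ the vanishing cycle complex $R\Phi_g(\cK|_U)$ vanishes near $y$. The distinguished triangle relating $i^*\cK$, $R\Psi_g(\cK|_U)$, and $R\Phi_g(\cK|_U)$, combined with the standard identification of $Ri^!\cK$ in terms of nearby cycles (after the appropriate purity twist), then yields the desired isomorphism at $y$. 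The main obstacle is this last step: cleanly translating the abstract micro-local transversality into the concrete purity isomorphism via the vanishing/nearby cycle triangle for a local equation. The finite tor-dimension hypothesis on $\cK$ is used throughout to ensure that the derived tensor product $f^*\cK \otimes^L_{\Lambda} Rf^!\Lambda$ and its stalks behave as expected under the reductions.
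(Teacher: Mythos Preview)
The paper does not give its own proof of this proposition: it is stated as a citation of \cite[Proposition~8.13]{cc} and used as a black box. There is therefore nothing in the paper to compare your argument against.

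That said, your outline follows the expected strategy and is essentially the one Saito uses in \cite{cc}: reduce via the graph factorization to a closed immersion of smooth schemes, then work locally and induct on codimension down to a smooth divisor, where transversality yields universal local acyclicity for a defining equation. One point deserves more care. In the final step you invoke a ``standard identification of $Ri^!\cK$ in terms of nearby cycles''; this is where the actual content lies, and the passage from vanishing of $R\Phi_g(\cK|_U)$ to the purity isomorphism $i^*\cK(-1)[-2]\simeq Ri^!\cK$ is not literally the nearby-cycle triangle. What one actually uses is that universal local acyclicity of $g$ with respect to $\cK|_U$ forces the base change and specialization maps for the cartesian square defined by $g$ and the inclusion of the origin in $\mathbb{A}^1_k$ to be isomorphisms, from which the comparison $h_{i,\cK}$ is read off directly (this is how Saito argues). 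Your sketch would benefit from making that step explicit rather than appealing to an unspecified identification.
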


\begin{defin}\label{cf}
For a morphism $f : Y\to X$ between pure dimensional smooth schemes of finite type over  $k$, we put $c_f = \dim Y-\dim X$.
\end{defin}

The following proposition generalizes \cite[Corollary 2.13]{HT}.

\begin{prop}[{\cite[Proposition 1.1.8]{saito22}}]\label{basechange}
Let
$$
	\begin{tikzcd}
V  \drar[phantom, "\scalebox{0.8}{$\square$}"]\arrow{r}{f'}&U\arrow{d}{g}\\
Y\arrow[leftarrow, u, "g' "]\arrow{r}{f}&X
\end{tikzcd}
$$
be a cartesian diagram of smooth schemes of finite type over $k$ of pure dimension.
 Let $\cK\in D^b_{ctf}(U,\Lambda)$ and assume that the following hold :
\begin{enumerate}\itemsep=0.2cm
\item The morphism $f : Y \to X$ is separated.
\item We have $c_f=c_{f'}$.
\item $f' : V\to U$ is $SS(\cK)$-transversal.
\end{enumerate}
Then, $f : Y\to X$ is $SS(Rg_*\cK)$-transversal if and only if the base change morphism
$$
f^*Rg_*\cK \to  Rg'_*f'^*\cK
$$
is an isomorphism.
\end{prop}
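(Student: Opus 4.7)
My strategy is to show that both the transversality condition on $f$ and the base change condition are captured by a single morphism being an isomorphism, namely the canonical comparison $h_{f, Rg_*\cK}$ of \eqref{Ftransversality}. The microlocal characterization of transversality via \cref{upper_shriek_transversality} (and its converse, which is part of the circle of ideas in Saito's treatment of $h$-morphisms) then closes the loop.

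First I would check that under hypothesis (2), the natural base change morphism $g'^*Rf^!\Lambda \to Rf'^!\Lambda$ is an isomorphism. Since the diagram is cartesian, $f$ is separated, and the relative dimensions agree in virtue of $c_f = c_{f'}$, this follows from flat base change for the exceptional inverse image together with the fact that under equidimensionality the dualizing complex $Rf^!\Lambda$ is, locally, a shift of $\Lambda$. Next, I would apply \cref{upper_shriek_transversality} to $f' : V \to U$, which is $SS(\cK)$-transversal by hypothesis (3), and conclude that $h_{f',\cK} : f'^*\cK \otimes^L Rf'^!\Lambda \to Rf'^!\cK$ is an isomorphism.

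The core computation is then the following chain of morphisms:
\begin{align*}
f^*Rg_*\cK \otimes^L Rf^!\Lambda
&\xrightarrow{\alpha \otimes \mathrm{id}} Rg'_*(f'^*\cK) \otimes^L Rf^!\Lambda \\
&\xrightarrow{\mathrm{proj.\ form.}} Rg'_*\!\left(f'^*\cK \otimes^L g'^*Rf^!\Lambda\right) \\
&\xrightarrow{\sim} Rg'_*\!\left(f'^*\cK \otimes^L Rf'^!\Lambda\right) \\
&\xrightarrow{h_{f',\cK}} Rg'_* Rf'^!\cK \\
&\xrightarrow{\mathrm{b.c.}} Rf^!Rg_*\cK,
\end{align*}
where $\alpha$ denotes the base change morphism of the statement, and the last arrow is the base change between $Rg'_*Rf'^!$ and $Rf^!Rg_*$ attached to a cartesian square. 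All arrows except possibly $\alpha \otimes \mathrm{id}$ are isomorphisms by the inputs gathered above. A diagram chase starting from the adjunction $Rf_!Rf^!\Lambda \to \Lambda$ and unwinding the definition of $h$ via the projection formula identifies this composite with $h_{f, Rg_*\cK}$. Consequently, $\alpha$ is an isomorphism if and only if $h_{f, Rg_*\cK}$ is, since tensoring by the locally invertible $Rf^!\Lambda$ reflects isomorphisms. Invoking the microlocal criterion characterizing transversality in terms of $h$-morphisms being isomorphisms completes the proof.

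\textbf{Main obstacle.} The technical heart is the verification that the displayed composite coincides with $h_{f, Rg_*\cK}$; this is a diagram chase, but one has to be careful with the compatibilities between the projection formula, the base change for the exceptional inverse image in a cartesian square, and the adjunction defining $h$. A secondary delicate point is the invocation of the converse "$h_{f, Rg_*\cK}$ is an isomorphism $\Rightarrow$ $f$ is $SS(Rg_*\cK)$-transversal", which goes slightly beyond the formulation of \cref{upper_shriek_transversality} recalled here and requires Saito's analysis of universal local acyclicity through the exceptional inverse image. The role of hypothesis (2) is precisely to ensure that $g'^*Rf^!\Lambda \simeq Rf'^!\Lambda$, which is the hinge allowing the transversality of $f'$ to feed into the identification of $h_{f, Rg_*\cK}$ with the base change morphism.
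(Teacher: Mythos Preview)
Your approach coincides with the paper's: both first verify $g'^*Rf^!\Lambda \simeq Rf'^!\Lambda$ from smoothness and $c_f=c_{f'}$, then combine \cref{upper_shriek_transversality} with \cite[Proposition 1.1.8]{saito22}. What you do additionally is unpack the content of Saito's proposition as the displayed chain of maps identifying $h_{f,Rg_*\cK}$ with the base change morphism (up to tensoring by the invertible object $Rf^!\Lambda$); this is precisely Saito's argument, so the two proofs are the same once the citation is unrolled.

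Two small points. First, your justification of $g'^*Rf^!\Lambda \simeq Rf'^!\Lambda$ via ``flat base change'' is off-target --- no flatness is assumed on $g$. The paper's reason (and the one you also gesture at) is that, all four schemes being smooth of pure dimension, both $Rf^!\Lambda$ and $Rf'^!\Lambda$ have locally constant cohomology by Poincar\'e duality, and then $c_f=c_{f'}$ with smoothness of $V$ gives the identification by \cite[Lemma 1.1.4]{saito22}. Second, your flagged concern about the converse ``$h_{f,Rg_*\cK}$ iso $\Rightarrow$ $SS(Rg_*\cK)$-transversal'' is legitimate: \cref{upper_shriek_transversality} only gives the forward direction. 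In \cite{saito22} the notion of $\cF$-transversality is \emph{defined} via $h_{f,\cF}$ being an isomorphism, and Proposition~1.1.8 there is stated in those terms; the paper's proof, like yours, ultimately defers this identification to Saito rather than proving it from scratch.
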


\begin{proof}
Since all schemes are smooth over $k$, the two complexes $Rf^!\Lambda$ and $Rf'^!\Lambda$ have locally constant cohomologies by the Poincar\'e duality.
Since $c_f=c_{f'}$ and $V=Y\times_XU$ is smooth, the base change morphism $g'^*f^!\Lambda\to f'^!\Lambda$ is an isomorphism (\cite[Lemma 1.1.4]{saito22}). 
Then, \cref{basechange} is a direct consequence of  \cref{upper_shriek_transversality} and \cite[Proposition 1.1.8]{saito22}.
\end{proof}


\subsection{The characteristic cycle}
Let $f:X \to   S$ be a morphism between smooth schemes of finite type over $k$ where $S$ is a curve over $k$.
Let $x\in X$ be a closed point  and put $s=f(x)$. 
Note that any local trivialization of $\bT^*S$ in a neighborhood of $s$ gives rise to a local section of $\bT^*X$ in a neighborhood of $x$ by applying $df:\bT^*S\times_S X \to \bT^*X$. 
We abusively denote by $df$ this section. \\ \indent
We say that $x$ is an {\it at most isolated $C$-characteristic point for $f:X \to   S$} if there is an open neighbourhood $U\subset X$ of $x$ such that  $f:U- \{x\} \to   S$ is $C$-transversal. 
In that case, the intersection of a cycle $A$ supported on $C$ with $[df]$ is supported at most at a single point in $\bT^*_xX$. 
Since $C$ is conical, the intersection number $(A, [df])_{\bT^*X,x}$ is independent of the chosen local trivialization for $\bT^*S$ in a neighborhood of $s$. 

\begin{theorem}[{\cite[Theorem 5.9]{cc}}]\label{Milnor_formula}
Let $X$ be a smooth scheme of finite type over $k$.
For every  $\cK\in D^b_{ctf}(X,\Lambda)$, there is a unique cycle $CC(\cK)$ of $\bT^{\ast}X$ supported on $SS(\cK)$  such that for every \'etale morphism $h: U \to  X$, for every morphism $f: U \to   S$ with $S$ a smooth curve over $k$,  for every at most isolated $h^{\circ}(SS(\cK))$-characteristic point $u\in U$ for $f:U \to   S$, we have the following Milnor type formula
 \begin{equation}\label{Milnor}
 -\dt (R\Phi_{\ubar}(h^{\ast}\cK, f))=(h^*CC(\cK),[df])_{T^{\ast}U, u}
 \end{equation}
 where $R\Phi_{\ubar}(h^{\ast}\cK, f)$ denotes the stalk of the vanishing cycle of $h^*\cK$ with respect to $f:U \to  S$ at a geometric point $\ubar \to   U$ above $u$.
\end{theorem}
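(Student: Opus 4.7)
My approach would define $CC(\cK)$ by prescribing its multiplicity along each irreducible component of $SS(\cK)$ via the Milnor formula applied to carefully chosen generic test morphisms, and then verify well-definedness and the global formula. By \cref{beilinson_theorem}, $SS(\cK) = \bigcup_a C_a$ is a finite union of irreducible closed conical subsets of pure dimension $n = \dim X$. For each $C_a$, I would pick a smooth point $u_a \in C_a$ lying over some $x_a \in X$ such that the projection $\pi : C_a \to X$ is smooth at $u_a$ and $u_a$ avoids the other components $C_b$. After étale localization $h_a : U_a \to X$ around $x_a$, the smoothness of $\pi$ at $u_a$ together with the conicity of $C_a$ allows one to construct a morphism $f_a : U_a \to S$ to a smooth curve such that the section $df_a$ meets $h_a^{\circ} SS(\cK)$ only at a point above $u_a$, with transversal intersection. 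Define
$$m_a := -\dt\bigl(R\Phi_{\bar u_a}(h_a^*\cK, f_a)\bigr),$$
and set $CC(\cK) := \sum_a m_a [C_a]$. Uniqueness is then immediate: any cycle supported on $SS(\cK)$ that satisfies the Milnor formula must have exactly these multiplicities, since the test functions $f_a$ isolate a single component with multiplicity one in the intersection pairing.

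The main obstacle—and really the substance of the theorem—is proving that $m_a$ is independent of the choices of $(u_a, f_a)$. My strategy would be to interpolate two admissible choices through a one-parameter family $(u_a^t, f_a^t)$ that stays within the smooth-transversal regime, and to invoke a constancy-of-total-dimension-of-vanishing-cycles statement along such a family. This constancy amounts to a form of universal local acyclicity for the vanishing cycles functor restricted to the family, and relies on the fact (proved by Beilinson) that $SS(\cK)$ captures exactly the failure of $\cK$ to be locally acyclic in transversal directions; together with \cref{upper_shriek_transversality} it should allow a base-change argument showing that $\dt R\Phi$ is locally constant as $(u, f)$ varies within the transversal-isolated locus. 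The delicate point is the wild ramification contribution, which is where the statement genuinely differs from the characteristic zero Dubson--Kashiwara picture and where no naive specialization argument is available.

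Once well-definedness is established, the global Milnor formula at an arbitrary at-most-$h^{\circ} SS(\cK)$-isolated characteristic point $u \in U$ of $f : U \to S$ should follow by localizing near $u$, using bilinearity of the intersection pairing to decompose the right-hand side into contributions from the components $h^{\circ} C_a$ meeting $df$ near $u$, and reducing each summand to the generic definition by a small transversal deformation of $f$ that separates the characteristic points and makes each intersection transversal. On the left-hand side the total dimension of vanishing cycles should decompose correspondingly via a Mayer--Vietoris or distinguished triangle argument on $\cK$ filtered by the singular support components, together with the same constancy principle applied above. In short, the entire theorem reduces to one analytic input—control of $\dt R\Phi$ under transversal deformations of test functions—and all the difficulty is concentrated there.
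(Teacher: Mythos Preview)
This theorem is not proved in the paper: it is stated as \cite[Theorem 5.9]{cc} and used as a black box. There is no argument in the paper to compare your proposal against. Your sketch is a plausible outline of the strategy in Saito's original paper, and you correctly identify that the entire difficulty lies in the constancy of $\dt R\Phi$ under transversal deformation of the test function; but nothing in the present paper addresses that point, and your proposal does not actually carry it out either---it names the obstacle and defers it to ``a form of universal local acyclicity'' without supplying the mechanism. For the purposes of this paper, no proof is expected here.
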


\begin{definition}
The cycle $CC(\cK)$ from \cref{Milnor_formula} is the {\it characteristic cycle of $\cK$}. 
\end{definition}

\begin{rem}[{\cite[Proposition 5.14]{cc}}]\label{CC_perv}
When $\cK$ is perverse, $SS(\cK)$ and $CC(\cK)$ have the same support and the multiplicities of $CC(\cK)$ are positive integers.
\end{rem}

We store for future use the following 

\begin{lem}\label{automatic_transversality}
Let $X$ be a scheme of finite type over a field $k$.
Let $j : U\hookrightarrow X$ be an affine open immersion with  $U$  smooth over $k$.
Let $i : X \hookrightarrow Y$ be a closed immersion in a smooth scheme of finite type over $k$.
Then, for every $\cP\in Perv_{\ft}(U,\Lambda)$, we have 
$$
SS(i_* j_! \cP)= SS(i_* Rj_* \cP) \ .
$$
\end{lem}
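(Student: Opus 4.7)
The plan is to reduce, via the intermediate extension and Verdier duality, to a statement about perverse sheaves supported on $Z:=X\setminus U$. Since $\cP$ is perverse of finite tor-dimension and $j$ is affine, Artin's affine vanishing theorem gives that $j_!\cP$ and $Rj_*\cP$ are both perverse on $X$, and the canonical morphism $j_!\cP\to Rj_*\cP$ factors as $j_!\cP\twoheadrightarrow j_{!*}\cP\hookrightarrow Rj_*\cP$ with kernel $K_1$ and cokernel $K_2$ both perverse on $X$ and supported on $Z$.

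The closed immersion $i$ is $t$-exact for the perverse $t$-structure, so pushing the resulting two short exact sequences forward by $i_*$ produces short exact sequences of perverse sheaves on the smooth $Y$. Additivity of the characteristic cycle in short exact sequences, combined with the non-negativity of its multiplicities for perverse complexes (\cref{CC_perv}), then yields
\begin{align*}
SS(i_*j_!\cP)&=SS(i_*K_1)\cup SS(i_*j_{!*}\cP),\\
SS(i_*Rj_*\cP)&=SS(i_*K_2)\cup SS(i_*j_{!*}\cP).
\end{align*}
Thus the lemma reduces to showing $SS(i_*K_1)=SS(i_*K_2)$.

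For this equality, I would invoke Verdier duality: the identity $\mathbb{D}_X\circ j_!\simeq Rj_*\circ\mathbb{D}_U$ for the open immersion $j$, combined with $\mathbb{D}_Y\circ i_*\simeq i_*\circ\mathbb{D}_X$ for the closed immersion $i$, shows that the two short exact sequences associated to $\cP$ and to $\mathbb{D}_U\cP$ are exchanged by Verdier duality. If $K'_1,K'_2$ denote the analogues of $K_1,K_2$ attached to $\mathbb{D}_U\cP$, this produces isomorphisms $\mathbb{D}_X K_1\simeq K'_2$ and $\mathbb{D}_X K_2\simeq K'_1$. Saito's invariance of $SS$ under Verdier duality on the smooth $Y$ then gives $SS(i_*K_1)=SS(i_*K'_2)$ and $SS(i_*K_2)=SS(i_*K'_1)$.

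The main obstacle is therefore the remaining matching $SS(i_*K_1)=SS(i_*K_2)$, which by the previous paragraph is equivalent to $SS(i_*K_1)=SS(i_*K'_1)$, i.e., to the statement that the singular support of the kernel term only depends on $\cP$ through data along $Z$ invariant under Verdier duality on $U$. I expect this to reduce, via the Milnor formula (\cref{Milnor_formula}), to a microlocal computation at each $y\in i(Z)$: the intersection contributions to $CC(i_*K_1)$ and $CC(i_*K'_1)$ at such a $y$ should be governed by the Abbes-Saito ramification filtration of $\cP$ along the divisorial points of $Z$, a datum left unchanged by Verdier duality of $\cP$ on the smooth $U$. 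This microlocal identification is where the affineness hypothesis on $j$ should enter essentially, via its role in guaranteeing the perversity of both $j_!\cP$ and $Rj_*\cP$ that underpins the additivity step above.
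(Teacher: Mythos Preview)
Your reduction via the intermediate extension and Verdier duality is correct up through step~3, but the argument stalls exactly at the heart of the matter. After the duality step you are left with $SS(i_*K_1)=SS(i_*K'_1)$, and the sketch you offer---invoking the Milnor formula and asserting that the relevant contributions are governed by the Abbes--Saito ramification of $\cP$ at divisorial points of $Z$---is not a proof. The perverse sheaves $K_1$ and $K'_1$ depend on $\cP$ in a way not captured by codimension-one ramification data (think of $Z$ with components of several dimensions, or $\cP$ not a shifted local system), and affineness of $j$ has already been fully used to get perversity of $j_!\cP$ and $Rj_*\cP$; it does no further work. In effect your last paragraph is an attempt to reprove, from scratch, the statement $CC(K_1)=CC(K_2)$, and that is precisely the nontrivial input.

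The paper's proof avoids this entirely. Once $i_*j_!\cP$ and $i_*Rj_*\cP$ are both perverse on the smooth $Y$, \cref{CC_perv} gives $SS=\Supp CC$ for each, so it suffices to know $CC(i_*j_!\cP)=CC(i_*Rj_*\cP)$. This is exactly \cite[Lemma~5.13(3)]{cc}: for a perverse sheaf whose $!$- and $*$-extensions along an open immersion into a smooth scheme are both perverse, the two characteristic cycles agree. With that citation the proof is three lines. What you were trying to establish in your final paragraph \emph{is} Saito's lemma; you should cite it rather than rederive it.
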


\begin{proof}
Since $j : U\hookrightarrow X$ is an affine open immersion, the complexes $i_* j_! \cP$ and $i_* Rj_* \cP$ are perverse by \cite[Corollaire 4.1.10]{BBD}.
Hence $SS(i_* j_! \cP) =\Supp CC(i_* j_! \cP)$
and $SS(i_* Rj_* \cP) =\Supp CC(i_* Rj_* \cP)$ by \cref{CC_perv}.
Since $CC(i_* j_! \cP) =  CC(i_* Rj_* \cP)$ by \cite[Lemma 5.13-(3)]{cc}, the conclusion follows.
\end{proof}

The following index formula provides a positive characteristic analogue of Kashiwara-Dubson's formula for $\mathcal{D}$-modules.

\begin{theorem}[{\cite[Theorem 7.13]{cc}}]\label{CC_and_chi}
Let $X$ be a smooth projective variety over an algebraically closed field $k$.
For every $\cK\in D^b_{ctf}(X,\Lambda)$, we have 
$$
\chi(X,\cK)= (CC(\cK), \bT^*_{X} X)_{\bT^*X} \ .
$$
\end{theorem}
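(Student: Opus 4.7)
The plan is to prove this index formula via a Lefschetz pencil argument, reducing the global computation to local contributions controlled by the Milnor formula (\cref{Milnor_formula}) together with the Grothendieck--Ogg--Shafarevich formula on $\bP^1$. Since both sides of the identity are additive in distinguished triangles, I would first reduce to the case where $\cK$ is perverse, in which case $CC(\cK)$ is effective by \cref{CC_perv}. I would then choose a closed immersion $X \hookrightarrow \bP_k$ and a generic pencil of hyperplane sections with axis $A$ of codimension $2$. Writing $\pi : \tilde X \to X$ for the blowup of $X \cap A$, this produces a proper morphism $f : \tilde X \to \bP^1_k$. Using that $SS(\cK)$ has pure dimension $n = \dim X$ (\cref{beilinson_theorem}), a Bertini-type genericity argument ensures that $\pi$ is $SS(\cK)$-transversal and that $f$ is $\pi^{\circ} SS(\cK)$-transversal away from a finite set $\Sigma \subset \tilde X$ of isolated characteristic points.

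Applying \cref{Milnor_formula} at each $u \in \Sigma$ and summing yields
$$
-\sum_{u \in \Sigma} \dt R\Phi_u(\pi^*\cK, f) = (\pi^* CC(\cK), [df])_{\bT^*\tilde X}.
$$
The left-hand side can be computed using Leray for the proper map $f$ combined with the Grothendieck--Ogg--Shafarevich formula applied to $Rf_* \pi^*\cK$ on $\bP^1_k$: since the generic fibre of $f$ is a generic hyperplane section $X \cap H$, this gives
$$
\sum_{u \in \Sigma} \dt R\Phi_u(\pi^*\cK, f) = 2 \chi(X \cap H, \cK|_{X \cap H}) - \chi(\tilde X, \pi^*\cK).
$$
A standard blowup calculation, exploiting that the exceptional divisor of $\pi$ is a $\bP^1$-bundle over $X \cap A$, yields $\chi(\tilde X, \pi^*\cK) = \chi(X, \cK) + \chi(X \cap A, \cK|_{X \cap A})$.

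The final step is to identify the intersection number $(\pi^* CC(\cK), [df])_{\bT^*\tilde X}$ with $(CC(\cK), \bT^*_X X)_{\bT^*X}$ up to the Euler-characteristic corrections computed above, and I would carry this out by induction on $\dim X$. The base case $\dim X = 1$ is a smooth projective curve, where the statement reduces to the classical Grothendieck--Ogg--Shafarevich formula. For the induction step, one uses that, for a sufficiently generic hyperplane section, $CC(\cK|_{X \cap H})$ is obtained from $CC(\cK)$ by a transversal intersection, so that the formula applied to both $X \cap H$ and $X \cap A$, combined with the Milnor sum above, forces the identity for $X$. The main obstacle is precisely this compatibility of $CC$ with generic hyperplane restrictions, and the associated geometric control of $\pi^*CC(\cK)$ along the exceptional divisor and of $[df]$ near the axis; this forces the pencil to be generic not merely with respect to $SS(\cK)$ but with respect to a stratification adapted to both $SS(\cK)$ and its intersection with the axis.
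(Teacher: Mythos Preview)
The paper does not prove this statement: \cref{CC_and_chi} is quoted verbatim from \cite[Theorem 7.13]{cc} and used as a black box (notably in the proof of \cref{chiXL}). There is therefore no proof in the paper to compare your proposal against.

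That said, your sketch is broadly the strategy Saito himself uses in \cite{cc}: reduction to curves via a generic Lefschetz pencil, with the Milnor formula controlling the local contributions and Grothendieck--Ogg--Shafarevich handling the base curve. The main point you flag as an obstacle --- compatibility of $CC$ with restriction to a generic hyperplane section --- is exactly the content of Saito's pullback formula for $CC$ under properly transversal morphisms, which is established in \cite{cc} prior to the index formula and is the nontrivial input your induction needs. Your outline is reasonable as a high-level summary of Saito's argument, but since the present paper simply cites the result, there is nothing further to assess here.
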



\subsection{Bounding the ramification with coherent sheaves}\label{bounded_ramification_section}

Let $X$ be a scheme of finite type over $k$.
We denote by $\bQ[\Coh(X)]$ the free $\bQ$-vector space on the set of isomorphism classes of coherent sheaves on $X$.
Observe that the pullback along every morphism $f : Y\to X$ of schemes of finite type over $k$ induces a morphism of $\bQ$-vector spaces
$$
f^* : \bQ[\Coh(X)]\to \bQ[\Coh(Y)]\ .
$$
Assume now that $X$ is normal and let $\cE\in \Coh(X)$.
If $X^1\subset X $ denotes the set of codimension $1$ points of $X$, 
we define a Weil divisor on $X$ by the formula
$$
T(\cE):= \sum_{\eta\in X^1}    \length_{\cO_{X,\eta}}( \cE|_{X_{\eta}}^{\tors})\cdot \overline{\{\eta\}}
$$
where $X_{\eta}= \Spec \cO_{X,\eta}$ and where $\cE|_{X_{\eta}}^{\tors}$  is the torsion part of $\cE|_{X_{\eta}}$.

\begin{example}
If $R$ is an effective Cartier divisor of $X$ with ideal sheaf $\cI_R$ and if $\cE=\cO_X/\cI_R$, then $T(\cE)=R$.
\end{example}

If $\Weil(X)_{\bQ}$ is the space of $\bQ$-Weil divisors on $X$, the map $T : \Coh(X)\to \Weil(X)_{\bQ}$ induces a map of $\bQ$-vector spaces
$$
T : \bQ[\Coh(X)]\to \Weil(X)_{\bQ} \ .
$$

\begin{definition}\label{bounded_conductor}
Let $X$ be a scheme of finite type over $k$.
Let $\cK\in D_c^b(X,\Lambda)$ and  $\cE\in \bQ[\Coh(X)]$.
We say that \textit{$\cK$ has log conductors bounded by $\cE$} if for every morphism $f : C\to X$ over $k$ where $C$ is a smooth curve over $k$, we have
$$
LC(\cH^i\cK|_C)\leq T(f^*\cE) 
$$
for every $i\in \bZ$ where the left-hand side is defined in \cref{def_LC}.
We denote by $D_c^b(X,\cE,\Lambda)$ the full subcategory of $D_c^b(X,\Lambda)$ spanned by objects having log conductors bounded by $\cE$.
\end{definition}

The following is our main example of sheaf with explicit bound on the log conductors.

\begin{prop}[{\cite[Proposition 5.8]{HuTeyssierCohBoundedness}}]\label{bounded_ramification_ex}
Let $X$ be a normal scheme of finite type over $k$.
Let $D$ be an effective Cartier divisor of $X$ and put $j : U:=X-D\hookrightarrow X$.
Let  $\cL\in \LC_{\ft}(U,\Lambda)$ and $\cE\in \bQ[\Coh(X)]$.
\begin{enumerate}\itemsep=0.2cm
\item If $j_!\cL$ has log conductors bounded by $\cE$, then $LC_X(j_!\cL) \leq T(\cE)$.
\item If $X$ is smooth over $k$, then $j_!\cL$ has log conductors bounded by $(\lc_D(\cL)+1)\cdot \cO_D$.
\end{enumerate}

\end{prop}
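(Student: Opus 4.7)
The plan is to treat the two parts separately.

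For Part (1), I fix a prime Weil divisor $Z\subset X$ with generic point $\eta$. The inequality $LC_X(j_!\cL)\leq T(\cE)$ of $\bQ$-Weil divisors reduces, component by component, to
\[
\lc_Z(j_!\cL) \leq \length_{\cO_{X,\eta}}(\cE|_{X_\eta}^{\tors}).
\]
My plan is to exhibit a smooth curve $f\colon C \to X$ and a point $x\in C$ with $f(x)$ a smooth point of $Z$, such that (a) a local equation $\pi\in\cO_{X,f(x)}$ of $Z$ pulls back to a uniformizer of $\cO_{C,x}$ (that is, $f$ meets $Z$ transversally at $x$), and (b) $\lc_x(f^*j_!\cL) = \lc_Z(j_!\cL)$. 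From (a) it follows that, for each coherent sheaf $\cE_i$ occurring in a decomposition $\cE=\sum_i a_i[\cE_i]$, the torsion part of $(f^*\cE_i)_x$ has the same length as that of $\cE_i|_{X_\eta}$, so the coefficient of $x$ in $T(f^*\cE)$ equals the coefficient of $Z$ in $T(\cE)$. Combining with (b) and applying the hypothesis $LC(f^*j_!\cL) \leq T(f^*\cE)$ at $x$ then yields the claim. To construct such a curve: since $X$ is normal, $Z$ meets $X^{\sm}$ in a dense open, so I would work near a smooth point $y$ of $Z$ inside $X^{\sm}$; picking a regular system of parameters $(\pi,t_1,\ldots,t_{n-1})$ at $y$ with $(\pi)$ a local equation of $Z$, the closed subscheme $V(t_1,\ldots,t_{n-1})$ locally carves out a smooth curve $C$ through $y$ transversal to $Z$, settling (a). For (b), a sufficiently generic choice of the $t_i$ should ensure that the restriction of $\cL$ to $C$ realises the logarithmic Abbes--Saito slopes of $\cL$ along $Z$.

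For Part (2), I must show, for every smooth curve $f\colon C\to X$ and every $x\in C$,
\[
\lc_x(f^*j_!\cL)\leq (\lc_D(\cL)+1)\cdot m_x,
\]
where $m_x$ is the multiplicity of $f^*D$ at $x$. If $f(x)\notin D$ both sides vanish. Otherwise, setting $r := \lc_D(\cL)$, letting $\eta_1,\ldots,\eta_s$ be the generic points of components of $D$ whose closure contains $f(x)$ with multiplicities $d_i$ in $D$, and $e_x^{(i)} := v_x(f^*\pi_{\eta_i})$ (so that $m_x = \sum_i d_i e_x^{(i)}$), the behaviour of Abbes--Saito logarithmic slopes under pullback of Galois representations---passing to non-logarithmic conductors via \cref{inequalityLogNonLog}, scaling by $e_x^{(i)}$, and reverting via \cref{inequalityLogNonLog} once more---gives $\lc_x(f^*\cL)\leq (r+1)\sum_i e_x^{(i)} \leq (r+1) m_x$, which is what is required.

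The main obstacle lies in property (b) of Part (1): showing that a transversal curve \emph{realises} (and does not merely bound) the generic logarithmic conductor along $Z$ demands fine control of the Abbes--Saito filtration under the transversal base change, and is the technical heart of the argument. In Part (2), the delicate point is the combined treatment of several components of $D$ at a crossing, where wild ramification contributions from different directions must be controlled simultaneously; this is handled by working throughout in the logarithmic setting.
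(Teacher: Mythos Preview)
This proposition is quoted from the companion paper \cite{HuTeyssierCohBoundedness} and the present paper supplies no proof, so there is no in-paper argument to compare your proposal against. Here is an assessment of the sketch on its own merits.

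For Part~(1) your reduction to a single prime divisor $Z$ is correct, and you are right that step~(b) is the crux. But ``a sufficiently generic choice of the $t_i$ should ensure\ldots'' is not an argument. The claim that some transversal curve through a smooth point of $Z$ satisfies $\lc_x(\cL|_C)\geq \lc_Z(j_!\cL)$ is itself a theorem about the Abbes--Saito filtration (essentially that the generic logarithmic slope along $Z$ is detected by restriction to generic transversal curves), and it requires real input---either the characteristic-form description of ramification or the semi-continuity results of Saito and the first author. You should name the precise result you intend to invoke rather than leave it as an expectation.

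For Part~(2) there is a genuine gap. The phrase ``scaling by $e_x^{(i)}$'' presupposes an extension of complete discrete valuation fields from the local field at the generic point $\eta_i$ of $D_i$ to the local field $\widehat{\cO}_{C,x}$, along which one could transport the non-logarithmic filtration and read off a factor of $e_x^{(i)}$. No such extension exists: $\eta_i$ is a height-one point of $X$ with (typically imperfect) residue field $k(D_i)$, whereas $f(x)$ is a closed point of $X$, and $\widehat{\cO}_{C,x}$ receives no ring map from $\widehat{\cO}_{X,\eta_i}$. The bound you are after, $\lc_x(\cL|_C)\leq(\lc_D(\cL)+1)\cdot m_x$ for \emph{every} curve $C$, is not a formal consequence of \cref{inequalityLogNonLog} combined with a base-change identity; it is a semi-continuity statement for logarithmic conductors under restriction to curves. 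The proofs in the literature (including the cited reference) typically pass through a modification of $X$ along $D$ making the ramification non-degenerate along a simple normal crossings boundary, where explicit curve-restriction bounds are available; the ``$+1$'' in $(\lc_D(\cL)+1)\cdot\cO_D$ is exactly what absorbs the growth of the log conductor divisor under those blow-ups. Your outline does not touch this mechanism.
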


\begin{lem}\label{ramification _of_Hom}
Let $X$ be a scheme of finite type over $k$ and let $\cE\in \bQ[\Coh(X)]$.
Let $j : U\hookrightarrow X$ be an open immersion and let $\cL_1,\cL_2\in \Loc_{\ft}(U,\cE,\Lambda)$.
Then, $\cHom(\cL_1,\cL_2) \in \Loc_{\ft}(U,\cE,\Lambda)$.
\end{lem}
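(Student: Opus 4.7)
The plan is to reduce the statement to a purely local assertion on $\Lambda[G_K]$-modules and exploit the logarithmic slope decomposition recalled in \cref{propramfil}.

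First, I would verify that $\cHom(\cL_1,\cL_2)$ already lies in $\Loc_{\ft}(U,\Lambda)$. By \cite[Lemma 4.4.14]{Wei}, the stalks $\cL_{i,\bar x}$ are free $\Lambda$-modules of finite rank, so $\Hom_\Lambda(\cL_{1,\bar x},\cL_{2,\bar x})$ is again free of finite rank; in particular $\cHom(\cL_1,\cL_2)$ is locally constant constructible of finite tor-dimension. Only the log conductor bound remains to be verified.

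Next I would fix a morphism $f : C\to X$ with $C$ a smooth curve and a closed point $x\in C$, and consider the Galois representations $M_i$ on $\Lambda$-modules attached to the restriction of $\cL_i$ to the generic branch of $C$ at $x$ (over the fraction field $K$ of $\widehat{\cO}_{C,x}$); note that if $f(x)\notin U$, the restriction of $j_!\cL_i$ to $\Spec K$ coincides with that of $\cL_i$, so this makes sense in all cases. By hypothesis both $\lc_K(M_1)$ and $\lc_K(M_2)$ are at most $r:=\length_{\cO_{C,x}}\bigl((f^*\cE)^{\tors}_x\bigr)$. From the logarithmic slope decomposition recalled in \cref{propramfil}, one deduces that $\lc_K(M)\leq r$ is equivalent to the triviality of the $G^{r+}_{K,\log}$-action on $M$, since each slope piece $M_{\log}^{(s)}$ with $s\leq r$ is fixed by $G^{s+}_{K,\log}\supseteq G^{r+}_{K,\log}$.

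The crux is then the elementary observation that if $g\in G_K$ acts trivially on both $M_1$ and $M_2$, it acts trivially on $\Hom_\Lambda(M_1,M_2)$, since $(g\cdot\phi)(m)=g\cdot\phi(g^{-1}\cdot m)$ collapses to $\phi(m)$. Applied to every $g\in G^{r+}_{K,\log}$, this yields $\lc_K(\Hom_\Lambda(M_1,M_2))\leq r$, which is the desired bound at $x$. Since $x$ was arbitrary, $\cHom(\cL_1,\cL_2)$ has log conductors bounded by $\cE$. I do not anticipate any substantive obstacle; the whole argument is essentially the pointwise functoriality of the logarithmic ramification filtration under the internal $\Hom$ construction.
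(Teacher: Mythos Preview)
Your proof is correct and follows essentially the same approach as the paper. The paper's proof simply observes that the formation of $j_!\cHom(\cL_1,\cL_2)$ commutes with pullback and reduces to the case where $X$ is the spectrum of a strict henselian dvr, declaring the local statement ``obvious''; you have spelled out precisely that obvious step, namely that $\lc_K(M)\leq r$ is equivalent to triviality of the $G^{r+}_{K,\log}$-action and that this property is visibly preserved under $\Hom_\Lambda(-,-)$.
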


\begin{proof}
Since $\cL_1,\cL_2$ are locally constant constructible sheaves, the formation of the sheaf $j_!\cHom(\cL_1,\cL_2)$ commutes with pullback.
Hence, we reduce to an analogous statement where $X$ is the spectrum of a strict henselian dvr over $k$, where the statement is obvious.
\end{proof}

\begin{defin}\label{admissible}
Let $X$ be a scheme of finite type over $k$.
We say that a $\bQ$-linear map $\fc : \bQ[\Coh(X)]\to \bQ$ is \textit{admissible} if the following conditions are satisfied :
\begin{enumerate}\itemsep=0.2cm
\item For every $\cE\in \Coh(X)$, we have $\fc(\cE)\in \bN$.
\item For every $\cE_1,\cE_2\in \Coh(X)$, we have $\fc(\cE_1\bigoplus \cE_2)\leq \fc(\cE_1)+\fc(\cE_2)$.
\end{enumerate}
\end{defin}

\begin{defin}
Let $P$ be a property of morphisms of schemes over $k$ and let $n\geq 0$. 
A $P$-relative stratified scheme $(X/S,\Sigma)$ of relative dimension $\leq n$ will refer to a morphism $X\to S$ between schemes of finite type over $k$ satisfying $P$ such that the fibres of $X\to S$ have dimensions $\leq n$ and where $X$ is endowed with a finite stratification $\Sigma$.
\end{defin}

The following theorem is one of the main result of \cite{HuTeyssierMinkowski} :
\begin{thm}[{\cite[Corollary 7.30]{HuTeyssierMinkowski}}]\label{general_boundedness_complex}
Let $(X/S,\Sigma)$ be a proper relative  stratified scheme of relative dimension $\leq n$ and let $a\leq b$ be integers.
Then, there is an admissible function $\fc : \bQ[\Coh(X)]\to \bQ$ and  $\fd\in \mathds{N}[x]$ of degree $n$ such that for every algebraic geometric point $\sbar\to S$, every finite field $\Lambda$ of characteristic $\ell\neq p$,  every $\cE\in \bQ[\Coh(X)]$ and every  $\cK\in D_{\Sigma_{\sbar}}^{[a,b]}(X_{\sbar},\cE_{\sbar},\Lambda)$, we have 
$$
\sum_{j\in \bZ} h^j(X_{\sbar},\cK)\leq \fd(\fc(\cE))\cdot \Rk_{\Lambda}\cK  \ .
$$
\end{thm}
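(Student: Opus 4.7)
The plan is to prove the bound by induction on the relative dimension $n$, after reducing the global uniform statement to a bound on a single fiber of a relative curve fibration.

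First, I would reduce to the case where $\cK$ is a single $\Sigma_{\sbar}$-constructible sheaf concentrated in one degree. Since $\cK\in D_{\Sigma_{\sbar}}^{[a,b]}$, the spectral sequence from the canonical filtration shows that $\sum_j h^j(X_{\sbar},\cK)$ is bounded by $\sum_{j=a}^b \sum_i h^i(X_{\sbar}, \cH^j\cK)$, and each cohomology sheaf inherits the $\cE$-bound on log conductors (the notion in \cref{bounded_conductor} is pointwise in the cohomological degree). Using the stratification $\Sigma_{\sbar}$ and the excision triangles $j_!j^*\cF\to \cF\to i_*i^*\cF$ for the successive closed strata, one reduces further to bounding $\sum_i h^i(X_{\sbar}, j_{U,!}\cL)$ where $U$ is a stratum, $j_U : U\hookrightarrow X_{\sbar}$ its (locally closed) inclusion, and $\cL\in \LC_{\ft}(U,\Lambda)$. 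Since $X\to S$ is proper, $h^i(X_{\sbar}, j_{U,!}\cL) = h^i_c(U,\cL)$. The number of strata is a geometric invariant of $(X/S,\Sigma)$ independent of $\sbar$, so this only multiplies the eventual bound by a constant.

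Next, for the inductive step, I would choose a closed immersion $X\hookrightarrow \bP^N_S$ (possible since $X\to S$ is proper, and generic hyperplane data spread out étale locally on $S$) and, for each geometric fiber $U\subset X_{\sbar}$, select a Lefschetz pencil $\phi : \wt{U}\to \bP^1_{\sbar}$ realizing $U$ as a family of $(n-1)$-dimensional sections. The Leray spectral sequence for $\phi_!$ gives
$$
h^i_c(U,\cL)\ \leq\ \sum_{p+q=i} h^p_c(\bP^1_{\sbar}, R^q\phi_! \cL).
$$
The fiber cohomology $R^q\phi_!\cL$ at a generic point has rank controlled by applying the inductive hypothesis to the generic fiber (an $(n-1)$-dimensional scheme), yielding a bound of the form $\fd_{n-1}(\fc(\cE_{|Y}))\cdot\Rk_\Lambda\cL$ for a suitable hyperplane section $Y$. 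The bound on $\bP^1_{\sbar}$ is then a Grothendieck--Ogg--Shafarevich estimate:
$$
\sum_p h^p_c(\bP^1_{\sbar}, R^q\phi_! \cL)\ \leq\ 2\Rk_\Lambda R^q\phi_!\cL\ +\ \sum_{x}(\sw_x(R^q\phi_!\cL)+\Rk_\Lambda R^q\phi_!\cL),
$$
where the sum runs over critical values of $\phi$. The Swan conductors of $R^q\phi_!\cL$ at these points are linearly controlled by the log conductors of $\cL$ along the vanishing cycles via the Milnor-type formula \eqref{Milnor} and \cref{inequality_Swan_dimtot}; this is precisely the place where one uses that $\cE$ bounds log conductors along curves \emph{not} necessarily contained in some codimension-$1$ locus, i.e.\ the coherent sheaf formulation of \cref{defin_E_bound}.

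Finally, admissibility of $\fc$ (integer values, subadditive on direct sums) is natural because $T(\cE)$ is constructed from lengths of torsion quotients, which are additive in short exact sequences. Tracking the recursion, each inductive step multiplies the preceding bound by a linear factor in $\fc(\cE)$ (a number of critical points plus a generic Swan conductor, both linear in the conductor-divisor degrees that are themselves linear in $\fc(\cE)$). After $n$ steps this produces a polynomial $\fd\in \mathds{N}[x]$ of degree exactly $n$. The main obstacle is Step 2: producing a Lefschetz pencil $\phi$ whose critical locus can be controlled uniformly in $\sbar$ and whose behavior at the critical fibers is compatible with the $\cE$-boundedness condition along arbitrary curves inside $X_{\sbar}$. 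This is exactly why one must work with the coherent-sheaf-based bound $\cE\in\bQ[\Coh(X)]$ rather than with effective Cartier divisors supported on a fixed boundary: wild ramification arising from vanishing cycles of the pencil may concentrate in codimension $>1$ and would be invisible to a divisorial bound.
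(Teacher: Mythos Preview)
This statement is not proved in the present paper: it is quoted as \cite[Corollary 7.30]{HuTeyssierMinkowski} and used as a black box input to \cref{Bounddmdeg_normal}. There is therefore no proof here to compare your proposal against.

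On its own merits, your architecture (d\'evissage to $j_{U,!}\cL$ on strata, then induction on relative dimension via a pencil and Grothendieck--Ogg--Shafarevich on the base curve) is a plausible outline, but two steps are not yet arguments. First, ``proper'' does not imply ``projective'', so the closed immersion $X\hookrightarrow\bP^N_S$ you invoke needs justification (Chow's lemma, together with a check that the $\cE$-bound survives the resulting modification). Second, and more seriously, the crux is your assertion that the Swan conductors of $R^q\phi_!\cL$ at the critical values of the pencil, and the number of such critical values, are linearly controlled by $\fc(\cE)$ uniformly in $\sbar$ and in $\cL$. This is precisely the hard content: one must arrange the pencil so that both the singular locus and the wild ramification of the pushforward are governed by the fixed geometric datum $\cE$ rather than by the individual sheaf. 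Your appeal to the Milnor formula \eqref{Milnor} is problematic here, because that formula expresses $\dt$ of the vanishing cycles as an intersection number with $CC(\cL)$, and in this paper bounds on $CC$ are \emph{deduced from} the very theorem you are trying to prove (see \cref{Bounddmdeg_normal}); invoking them at this stage would be circular. An independent control on the nearby-cycle ramification under the $\cE$-bound is needed, and that is where the actual work of \cite{HuTeyssierMinkowski} presumably lies.
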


\section{Characteristic cycle boundedness}

\subsection{Geometric situation}\label{geo_situation}
      Let $S$ be a scheme of finite type over  $k$, let $E$ be a locally free sheaf  of $\cO_S$-modules on $S$.
      We let $Q_S$ be the relative universal hyperplane  in $\bP_S(E)$ and $F_S$ the relative universal projective line in $\bP_S(E^{\vee})$.
      Let $f: X\to S$ be a projective morphism over $k$ and let $i :X\hookrightarrow \bP_S(E)$ be a closed immersion.
Consider the commutative diagram with cartesian squares
$$
\begin{tikzcd}  
X_{L}\ar{r}   \ar{d}   & ( X\times_{\bP_S(E)} Q_S ) \times_{\bP_S(E^{\vee})} F_S   \ar{r}  \ar{d} \arrow[rightarrow, rr,bend left = 25,  "q_X"]   & X\times_{ \bP_S(E)} Q_S  \ar{r}{p_X} \ar{d}  &     X \ar{d}{i} \\
           \bP(E)_L  \ar{r}  \ar{d}{p^{\vee}_L}           &         F_S \times_{\bP_S(E^{\vee}) } Q_S \ar{r}  \ar{d}        &         Q_S      \ar{r}{p}  \ar{d}{p^{\vee}}     &\bP_S(E)   \\
L \ar{r}    \ar{d}      &  F_S \ar{r}  \ar{d}  & \bP_S(E^{\vee}) \arrow[leftarrow, uu, crossing over ,bend left = 50, near end,  "p_X^{\vee}"]  &  \\
\{L\}   \ar{r}             &    \bG_S(E^{\vee},2)     \arrow[leftarrow, uuu, crossing over ,bend left = 70, near end,  "q_X^{\vee}"]             & 
	\end{tikzcd}
$$
where $L\in  \bG_S(E^{\vee},2)$ is a pencil lying above an algebraic geometric point $\sbar\to S$ lying over $s\in S$.
        Let $\overline{\eta}_s\to \bP_S(E^{\vee})$ be an algebraic geometric point over the generic point of $\bP_{\sbar}(E_{\sbar}^{\vee})$.
       Let  $\overline{\xi}_s\to  \bG_S(E^{\vee},2)  $ be an algebraic  geometric point over  the generic point of $ \bG_{\sbar}(E_{\sbar}^{\vee},2)$.
We consider the cartesian squares
$$
	\begin{tikzcd}
		X_{\overline{\eta}_{s}}\arrow{r}{ } \arrow{d} & X\times_{ \bP_S(E)} Q_S  \arrow{d}{p_X^{\vee}} \\
		\overline{\eta}_s \arrow{r}{ } & \bP_S(E^{\vee})
	\end{tikzcd}
	\qquad 
	\begin{tikzcd}
		X_{\overline{\xi}_s} \arrow{r} \arrow{d}&  ( X\times_{ \bP_S(E)} Q_S ) \times_{\bP_S(E^{\vee})} F_S \arrow{d}{q_{X}^{\vee} } \\
		\overline{\xi}_s\arrow{r} & \bG_S(E^{\vee},2)  .
	\end{tikzcd}
$$

\begin{rem}
We think about $X_{\overline{\eta}_s}$ as the generic hyperplane section of $X_{\sbar}$  and about $X_{\overline{\xi}_s}$ as the generic pencil of $X_{\sbar}$.
\end{rem}

\begin{rem}\label{genericXs}
Assume that $S=\Spec k$, in which case we drop the subscript $S$, let $\Lambda$ be a finite field of characteristic $\ell \neq p$ and let $\cK \in D_c^b(X,\Lambda)$.
Since $
 \chi(X_{\overline{\eta}}, \cK|_{X_{\overline{\eta}}} )$ is the generic rank of $Rp_{X\ast}^{\vee}(\cK|_{ X\times_{ \bP(E)} Q  })$, we have 
$$
 \chi(X_{\overline{\eta}}, \cK|_{X_{\overline{\eta}}}) =\chi(X\cap H, \cK|_{X\cap H})
$$ 
for every  sufficiently generic hyperplan $H\subset \bP(E)$.
Similarly, since $\chi(X_{\overline{\xi}}, \cK|_{X_{\overline{\xi}}})$ is the generic rank of $Rq_{X\ast}^{\vee}(\cK|_{( X\times_{ \bP(E)} Q ) \times_{\bP(E^{\vee})} F })$, we have 
$$
\chi(X_{\overline{\xi}}, \cK|_{X_{\overline{\xi}}}) =\chi(X_{L}, \cK|_{X_L})
$$ 
for every  sufficiently generic pencil $L\subset \bP(E^{\vee})$.
\end{rem}

\begin{assumption}[{\cite[4.3]{bei}}]\label{relative_beilinson_assumption}
Let $S$ be a scheme of finite type over $k$.
Let $E$ be a locally free sheaf of $\cO_S$-modules on $S$.
Let $X$ be a smooth projective scheme over $S$,  let $i : X\hookrightarrow \bP_S(E)$ be a closed immersion and put $\cL := i^*\cO_{\bP_S(E)}(1)$.
For every algebraic geometric point $\sbar\to S$ and  any distinct closed points $u,v\in X_{\sbar}$,  the composition 
$$
E_{\sbar} \to \Gamma(X_{\sbar},\cL|_{X_{\sbar}})  \to \cL_u/\mathfrak{m}^2_u \cL_u \oplus \cL_v/\mathfrak{m}^2_v \cL_v
$$
is surjective.
\end{assumption}

\subsection{Recollection}
In the setting of \cref{geo_situation}, assume that  $X$ is a projective space $\bP$ over $k$ and let $L\subset \bP^{\vee}$ be a pencil with axis  $A\subset \bP$.
In that case, the diagram from \cref{geo_situation} simplifies into  the following cartesian diagram 
$$
\begin{tikzcd}    
\bP_L \ar{r}   \arrow[rr,  bend left = 40, "\pi"] \arrow{d}{p^{\vee}_L} & Q\ar{d}{p^{\vee}}  \ar{r}{p} & \bP\\
L \ar{r}&  \bP^{\vee}& 
	\end{tikzcd}
$$
where  $\pi : \bP_L \to \bP$ is the blow-up of $\bP$ along $A$.
We have 
$$
\bP_L^{\circ}:=\bP_L -\pi^{-1}(A)  = \bP-A \subset Q-\bP(\bT^*_A \bP)
$$
with $\pi^{-1}(A)$ identifying to $\bP(\bT^*_A \bP)$ via the closed immersion $\bP_L \to Q$.
For a closed conical subset $C\subset \bT^* \bP$, we put 
$$
Z_L(C):=\bP_L  \cap \bP(C)\subset \bP_L \ .
$$ 

\begin{lem}[{\cite[Lemma 3.10]{cc}}]\label{transversality_H}
Let $C\subset \bP$ be a closed conical subset of pure dimension $\dim \bP$.
The complement $Q-\bP(C)$ is the largest open subset such that $p^{\vee} : Q\to \bP^{\vee}$ is $p^{\circ}(C)$-transversal. 
In particular for every  $H\in \bP^{\vee}$, the following are equivalent :
\begin{enumerate}\itemsep=0.2cm
\item the inclusion $H \hookrightarrow \bP$ is $C$-transversal.
\item the map $p^{\vee} : Q\to \bP^{\vee}$ is $p^{\circ}(C)$-transversal at every point $(x,H), x\in H$.
\item $H$ lies in $\bP^{\vee}-p^{\vee}(\bP(C))$.
\end{enumerate}
\end{lem}

\begin{prop}[{\cite[Proposition 4.5]{bei}}]\label{generically_radicial}
Let $X$ be a smooth scheme of finite type of pure dimension $n$ over   $k$.
Let $i :X\hookrightarrow \bP$ be an immersion satisfying \cref{relative_beilinson_assumption}.
Let $C\subset \bT^*X$ be  a closed conical subset of pure dimension $n$ and let $(C_a)_a$ be its irreducible components.
Then
\begin{enumerate}\itemsep=0.2cm
\item If we put 
$$
D_a := \overline{ p^{\vee}(\bP(i_\circ C_a))} \ ,
$$
then $D_a$ is a divisor of $\bP^{\vee}$ and the induced map  $\bP(i_{\circ}C_a) \to D_a$ is generically radicial.
\item If $C_a \neq C_b$, then $D_a \neq D_b$.
\end{enumerate}
\end{prop}

\begin{rem}
As shown in \cite[Theorem 1.7]{bei}, the generic degree of $\bP(i_{\circ}C_a) \to D_a$ is 1 when $p\neq 2$ and 1 or 2 if $p=2$.
\end{rem}

The following  \cref{generic_L} is similar to \cite[Lemma 4.2.7]{UYZ} and \cite[Lemma 2.3]{SY}.

\begin{lem}\label{generic_L}
     Let $X$ be a smooth projective scheme of pure dimension $n$ over $k$ algebraically closed.
     Let $i :X\hookrightarrow \bP$ be a closed immersion satisfying \cref{relative_beilinson_assumption}.
     Let $C\subset \bT^*X$ be  a closed conical subset of pure dimension $n$ 
and put $D = p^{\vee}(\bP(i_{\circ}C))$ endowed with its reduced closed subscheme structure.
      Then, for every sufficiently generic  pencil $L$, we have :
\begin{enumerate}\itemsep=0.2cm
\item The map $\pi : \bP_L \to \bP$ is properly $i_{\circ}C$-transversal.
\item  $Z_L(i_{\circ}C)$ is a subset of $\deg D$ points of $\bP_L^{\circ}$ mapping bijectively to $D\cap L$.
\item the map $p^{\vee}_L : \bP_L\to L$ is $\pi^{\circ}i_{\circ}C$-transversal away from $Z_L(i_{\circ}C)$.
\item the points of $Z_L(i_{\circ}C)$ are isolated $\pi^{\circ}i_{\circ}C$-characteristic points for $p^{\vee}_L : \bP_L\to L$.
\end{enumerate}
\end{lem}
\begin{proof}
Since $C$ has pure dimension $n$,  note that $i_{\circ}C$ has pure dimension $\dim \mathds{P}$.
For a sufficiently generic pencil $L$ with axis $A$,  the inclusion $A\to \bP$ is properly $i_{\circ}C$-transversal by \cite[Lemma 1.3.7-(2)]{saito21}.
Since $\pi : \bP_L \to \bP$ is the blow-up of $\bP$ along $A$, item (1) follows from \cite[Lemma 4.2.4]{UYZ}.
For a sufficiently generic pencil $L$,  the set $D\cap L$ has exactly $\deg D$ elements.
Then (2) follows from \cref{generically_radicial}.
By \cref{transversality_H}, the right vertical arrow of the cartesian square
$$
\begin{tikzcd}    
\bP_L - Z_L(i_{\circ}C) \ar{r}   \arrow{d}{p^{\vee}_L}  & Q- \bP(i_{\circ}C) \ar{d}{p^{\vee}}    \\
L \ar{r}&  \bP^{\vee}                       & 
	\end{tikzcd}
$$
is $p^{\circ} i_{\circ}C$-transversal.
By \cite[Lemma 3.9-(2)]{cc},  we deduce (3).
We now prove (4). 
Choose a sufficiently generic pencil $L$ and pick $(x,H)\in Z_L(i_{\circ}C)$.
Let $U\subset Q$ be an open neighbourhood of $(x,H)$ containing no other point of $Z_L(i_{\circ}C)$ than $(x,H)$.
By (3),  we need to show that $p^{\vee}_L: U\cap \bP_L \to L $ is not $\pi^{\circ}i_{\circ}C$-transversal.
By (1) and \cite[Lemma 3.4-(3)]{cc},  the map $\bP_L\to Q$ is $p^{\circ} i_{\circ}C$-transversal.
Hence by  \cite[Lemma 3.9-(1)]{cc},  it is equivalent to show that $p^{\vee}: Q \to\bP^{\vee} $  is not $p^{\circ} i_{\circ}C$-transversal on a neighbourhood of $U\cap \bP_L$. 
Since $p^{\vee}: Q \to\bP^{\vee} $  is not $p^{\circ} i_{\circ}C$-transversal  at $(x,H)\in \bP(i_{\circ}C)$ in virtue of  \cref{transversality_H} the conclusion follows.

\end{proof}

\begin{rem}
Note that \cref{generic_L}-(4) is slightly stronger than needed for our purpose.
We will just need  that the points of $Z_L(i_{\circ}C)$ are \textit{at most} isolated $\pi^{\circ}i_{\circ}C$-characteristic points for $p^{\vee}_L : \bP_L\to L$.
\end{rem}


\begin{thm}\label{chiXL}
Let $X$ be a smooth projective scheme of pure dimension $n$ over $k$ algebraically closed.
Let $i :X\hookrightarrow \bP$ be a closed immersion in some projective space over $k$ of dimension $m$ satisfying \cref{relative_beilinson_assumption}.
Let $\Lambda$ be a finite field of characteristic $\ell \neq p$, let $\cK\in D_{c}^{b}(X,\Lambda)$ and write 
$$
SS(\cK)  =\bigcup_a C_a  \text{ and } CC(\cK)= \sum_a  m_a\cdot [C_a]
$$ 
where the $C_a \subset \bT^*X$ are distinct closed  irreducible conical subsets.
Put $D_a = p^{\vee}(\bP(i_\circ C_a))$ and let $d_a$ be the generic inseparable degree of $\bP(i_\circ C_a) \to D_a$.
Then,
$$
\chi(X_{\overline{\xi}}, \cK|_{X_{\overline{\xi}}})  =2 \cdot \chi(X_{\overline{\eta}}, \cK|_{X_{\overline{\eta}}} )+(-1)^{n-m} \sum_a d_a \cdot m_a \cdot \deg D_a \ .
$$
\end{thm}
\begin{proof}
By \cite[Lemma 5.13]{cc}, we have
$$
SS(i_*\cK) = \bigcup_a i_{\circ}C_a \text{ and } CC(i_* \cK)= (-1)^{n-m}\cdot \sum_a  m_a\cdot [i_\circ C_a]  \ .
$$
Let $L$ be a  sufficiently generic pencil. 
By \cref{generically_radicial}-(2), the $D_a\cap L$ are two by two disjoint.
Hence, so are the $Z_L(i_{\circ}C_a)$.
In particular, 
$$
Z_L(SS(i_*\cF)) = \bigsqcup_a Z_L(i_{\circ}C_a)  \subset \bP_L^{\circ}
$$
where $Z_L(i_{\circ}C_a)$  consists in exactly $\deg D_a$ points mapping bijectively to $D_a\cap L$ in virtue of \cref{generic_L}-(2).
By \cref{generic_L}-(1) and \cite[Corollary 8.15]{cc},  we have 
\[
SS(\pi^* i_*\cK)=\pi^{\circ}SS(i_*\cK) \ .
\]
Thus,   \cref{generic_L}-(3) implies that  $p_L^{\vee} : \bP_L\to L$ is $SS(\pi^* i_*\cK)$-transversal away from $Z_L(SS(i_*\cK)) $.
Since $\pi^* i_*\cK$ is micro-supported on $SS(\pi^* i_*\cK)$ and since $p_L^{\vee} : \bP_L\to L$ is proper, \cite[Lemma 4.3]{cc} implies that $Rp_{L\ast}^{\vee}\pi^* i_*\cK$  has locally constant cohomology sheaves on $L-\bigcup_a (D_a\bigcap L)$.
Thus
$$
CC(Rp_{L\ast}^{\vee}\pi^* i_*\cK) = -\rk_{\Lambda}(Rp_{L\ast}^{\vee}\pi^* i_*\cK) \cdot [\bT^*_L L] 
                                  +\sum_a \sum_{H\in D_a\bigcap L} n_H \cdot  [\bT^*_H L] \ ,
$$
where the $n_H$ are integers.
By proper base change, we have 
$$
Rp_{L\ast}^{\vee}\pi^* i_*\cK \simeq ( Rp_{X\ast}^{\vee} p_X^*\cK)|_L \ .
$$
For a sufficiently generic pencil $L$, we thus have 
$$
\rk_{\Lambda}(Rp_{L\ast}^{\vee}\pi^* i_*\cK)  = \rk_{\Lambda}(Rp_{X\ast}^{\vee} p_X^*\cK)= \chi(X_{\overline{\eta}}, \cK|_{X_{\overline{\eta}}} ) 
$$
where the last equality holds by  \cref{genericXs}.
By \cref{CC_and_chi}, we deduce that for a sufficiently generic pencil $L$, we have 
$$
\chi(X_{\overline{\xi}}, \cK|_{X_{\overline{\xi}}})  =2 \cdot \chi(X_{\overline{\eta}}, \cK|_{X_{\overline{\eta}}} )+\sum_a \sum_{H\in D_a\bigcap L} n_H \ .
$$
To conclude the proof of \cref{chiXL}, we are thus left to show that for a sufficiently generic pencil $L$, we have $n_H=(-1)^{n-m} \cdot d_a \cdot m_a$ for every  $H\in D_a\bigcap L$.
\cref{Milnor_formula} applied to $\id_L : L\to L$  at $H\in D_a\bigcap L$ yields
$$
n_H=-  \dimtot R\phi_H(Rp_{L\ast}^{\vee}\pi^* i_*\cK, \id_L)  \ .
$$
On the other hand, the compatibility of the vanishing cycles with proper push-forward yields a canonical equivalence 
$$
R\phi_H(Rp_{L\ast}^{\vee}\pi^* i_*\cK, \id_L) \simeq R\Gamma(H\times \{H\}, R\phi(\pi^* i_*\cK, p_{L}^{\vee})) \ .
$$
Let $(x,H)\in Z_L(SS(i_{\circ}C_a))$ be the unique point of $Z_L(SS(i_{\circ}C_a))$ lying over $H$.
Recall that no other point from $Z_L(SS(i_*\cK)) $ lies over $H$.
Hence, $p_L^{\vee} : \bP_L\to L$ is $SS(\pi^* i_*\cK)$-transversal at every point of $H\times \{H\}$ distinct from $(x,H)$.
Thus $p_L^{\vee} : \bP_L\to L$ is  locally acyclic with respect to $\pi^* i_*\cK$ at every point of $H\times \{H\}$ distinct from $(x,H)$.
Hence, the restriction of  $R\phi(\pi^* i_*\cK, p_{L}^{\vee})$ to $H\times \{H\}$ is supported on $(x,H)$.
Thus, 
$$
n_H= - \dimtot R\phi_{(x,H)}(\pi^* i_*\cK, p_{L}^{\vee}) \ .
$$
Since $(x,H)$ is an at most isolated $SS(\pi^* i_*\cK)$-characteristic point for $p_L^{\vee} : \bP_L\to L$ in virtue of \cref{generic_L}-(4), \cref{Milnor_formula}  yields
\begin{align*}
n_H& = (CC(\pi^* i_*\cK), dp_{L}^{\vee})_{\bT^*\bP_L,(x,H)} \\
      & =  (CC(i_*\cK), dp_{L}^{\vee\circ})_{\bT^*\bP,x}  \\
      &=  (-1)^{n-m} \cdot m_a \cdot (i_\circ C_a, dp_{L}^{\vee\circ})_{\bT^*\bP,x} \\
      & = (-1)^{n-m} \cdot  d_a \cdot m_a
\end{align*}
where the second equality follows from the fact that $(x,H)$ lies in  $\bP_L^{\circ}$, where the third equality follows from the fact that  $p_L^{\vee} : \bP_L\to L$ is $i_\circ C_b$-transversal  at $(x,H)$ for every $b\neq a$, and where the last equality follows from  \cite[Lemma 5.4]{cc}.
\end{proof}

\begin{defin}\label{total_degree_CC}
Let $X$ be a smooth projective scheme of pure dimension $n$ over $k$ and let $i : X\hookrightarrow \bP_k^m$ be a closed immersion over $k$ satisfying \cref{relative_beilinson_assumption}.
Let $CC$ be a $n$-cycle of $\bT^*X$ and write
$$
CC= \sum_{a}   m_{a} \cdot [C_{a} ]
$$ 
where the $C_{a}  \subset \bT^*X$ are distinct closed  irreducible conical subsets.
Define $D_{a}  = p^{\vee}(\bP(i_{\circ} C_{a} ))$ and let $d_{a} $ be the generic degree of $\bP(i_{ \circ}  C_{a} ) \to D_{a} $.
We define the \textit{total degree of CC} by
$$
\totdeg_i(CC):=(-1)^{n-m}\sum_{a}  d_{a}  \cdot m_{a}  \cdot \deg D_{a}  \ .
$$
\end{defin}

\begin{rem}\label{bound_deg_SS}
In the setting of \cref{total_degree_CC},  if $CC =CC(\cP)$ where $\cP \in \Perv_{\ft}(X,\Lambda)$, then the $m_a$ are positive and $SS(\cP)$ and $CC(\cP)$ have the same support by \cref{CC_perv}.
Thus, we have 
$$
\deg p^{\vee}(\bP(SS(i_\ast\cP)) \leq | \totdeg_i(CC(\cP)) |\ .
$$
\end{rem}

Before drawing the consequences of the Betti number estimates for the characteristic cycle, let us recall the following 

\begin{lem}[{\cite[Lemma 5.6]{UYZ}}]\label{SS_change_coef}
Let $X$ be a smooth scheme of finite type over $k$.
Let $\Lambda$ be a finite local ring of residue  characteristic $\ell \neq p$ with residue field $\Lambda_0$.
For every $\cK \in D_{ctf}^b(X,\Lambda)$, we have 
$$
SS(\cK)=SS(\cK \otimes_{\Lambda}^L \Lambda_0) \quad \text{and} \quad CC(\cK)=CC(\cK \otimes_{\Lambda}^L \Lambda_0) \ .
$$
\end{lem}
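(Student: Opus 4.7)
The proof combines two inputs: the compatibility of the vanishing cycles functor with derived reduction $-\otimes_\Lambda^L \Lambda_0$, and a careful analysis of how total dimensions of $P_K$-representations transform under such reduction.

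For the singular support, recall that $\cK$ is micro-supported on a closed conical subset $C \subset \bT^*X$ precisely when the vanishing cycles $R\Phi(h^*\cK,f)$ vanish at the relevant points for every $C$-transversal pair $(h,f)$. Since $\cK$ has finite tor-dimension over $\Lambda$, the canonical map
\[
R\Phi(h^*\cK,f) \otimes_\Lambda^L \Lambda_0 \longrightarrow R\Phi(h^*\cK \otimes_\Lambda^L \Lambda_0,\, f)
\]
is an isomorphism. The derived form of Nakayama's lemma, applied to the perfect $\Lambda$-complex $R\Phi(h^*\cK,f)$, then converts vanishing of this complex into vanishing after reduction, and back again. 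Therefore $\cK$ and $\cK \otimes_\Lambda^L \Lambda_0$ are micro-supported on exactly the same closed conical subsets, proving $SS(\cK)=SS(\cK \otimes_\Lambda^L \Lambda_0)$.

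For the characteristic cycle, the Milnor formula \cref{Milnor_formula} reduces the claim $CC(\cK)=CC(\cK \otimes_\Lambda^L \Lambda_0)$ to the pointwise numerical identity
\[
\dimtot_\Lambda(M) \;=\; \dimtot_{\Lambda_0}\bigl(M \otimes_\Lambda^L \Lambda_0\bigr)
\]
for $M := R\Phi_u(h^*\cK,f)$ a perfect $\Lambda$-complex with continuous $P_K$-action. By additivity of $\dimtot$ in distinguished triangles together with a bounded free resolution of $M$, this reduces to the case where $M$ is a free $\Lambda$-module of finite rank. In that case, each piece $M^{(r)}_{\log}$ of the logarithmic slope decomposition $M = \bigoplus_r M^{(r)}_{\log}$ is automatically free in view of \cite[Lemma 1.5]{KatzGauss}, and reduction mod $\mathfrak{m}$ respects the decomposition: $M^{(r)}_{\log} \otimes_\Lambda \Lambda_0$ is the slope-$r$ piece of $M \otimes_\Lambda \Lambda_0$, of the same rank over $\Lambda_0$ as $M^{(r)}_{\log}$ has over $\Lambda$. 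Both sides of the identity then equal $\sum_r r \cdot \rk_\Lambda(M^{(r)}_{\log})$.

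The main technical obstacle is the stability of the logarithmic slope decomposition under derived reduction. It rests on the purely group-theoretic fact that the kernel $1+\mathfrak{m} \subset \Lambda^*$ of the reduction $\Lambda^* \twoheadrightarrow \Lambda_0^*$ is pro-$\ell$, whereas $P_K$ is pro-$p$ with $p \neq \ell$. Consequently, any wild part of a $P_K$-action on a free $\Lambda$-module injects faithfully into the wild part of its reduction, so no slope can be collapsed or lost when passing from $\Lambda$ to $\Lambda_0$ coefficients.
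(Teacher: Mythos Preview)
The paper does not prove this lemma; it simply quotes it from \cite[Lemma 5.6]{UYZ}.  Your argument is essentially the standard one and is correct in outline, so there is nothing substantive to compare against.

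Two small points deserve cleaning up.  First, in the $SS$ argument you characterise micro-support via vanishing of $R\Phi(h^*\cK,f)$, but the paper's definition requires \emph{universal} local acyclicity.  This is harmless: the compatibility of vanishing cycles with $-\otimes_\Lambda^L\Lambda_0$ and the derived Nakayama step both persist after arbitrary base change, so the same reasoning gives the ULA equivalence.  Second, and more concretely, in the $CC$ argument you invoke the logarithmic slope decomposition $M=\bigoplus_r M^{(r)}_{\log}$ and conclude that both sides equal $\sum_r r\cdot\rk_\Lambda M^{(r)}_{\log}$.  That quantity is the Swan conductor $\sw_K(M)$, not the total dimension $\dimtot_K(M)=\sum_{r\geq 1} r\cdot\rk_\Lambda M^{(r)}$ appearing in the Milnor formula.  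The fix is purely notational: run the identical argument with the non-logarithmic slope decomposition $M=\bigoplus_r M^{(r)}$ (the freeness of the pieces and the stability under reduction hold for the same reason, namely that $P_K$ is pro-$p$ with $p$ invertible in $\Lambda$, so the slope idempotents live in $\Lambda[G]$ for the finite $p$-group $G$ through which the action factors).
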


The above lemma suggests to introduce the following

\begin{defin}\label{Rk_ring}
Let $X$ be a scheme of finite type over $k$ and let $\Lambda$ be a finite local ring of residue  characteristic $\ell\neq p$ with residue field $\Lambda_0$. 
For $\cK \in D_{ctf}^b(X,\Lambda)$, we put 
$$
\Rk_{\Lambda}  \cK := \Rk_{\Lambda_0}  \cK\otimes_{\Lambda}^L \Lambda_0 \ .
$$
\end{defin}

\begin{thm}\label{Bounddmdeg_normal}
Let $(X/S,\Sigma)$ be a  relative smooth projective stratified scheme of relative dimension $\leq n$.
Let $i : X\hookrightarrow \bP_S(E)$ be a closed immersion over $S$ satisfying \cref{relative_beilinson_assumption}.
Let $a\leq b$ be integers.
Then, there is an admissible function $\fc : \bQ[\Coh(X)]\to \bQ$ and  $\fd\in \mathds{N}[x]$ of degree $n$ such that for every finite local ring $\Lambda$ of residue characteristic $\ell \neq p$, every algebraic geometric point $\sbar\to S$, every $\cE\in \bQ[\Coh(X)]$ and every $\cK\in D_{\Sigma_{\sbar}, \ft}^{[a,b]}(X_{\sbar},\cE_{\sbar},\Lambda)$,  we have
\begin{equation}\label{eq_Bounddmdeg_normal}
|\totdeg_{i_{\sbar}}(CC(\cK)) | \leq \fd(\fc(\cE)) \cdot \Rk_{\Lambda} \cK \ .
\end{equation}
In particular, if $\cK$ is perverse, the number of irreducible components of $SS(\cK)$ and the multiplicities of $CC(\cK)$ are smaller than $\fd(\fc(\cE)) \cdot \Rk_{\Lambda} \cK$.
\end{thm}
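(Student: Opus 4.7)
The plan is to combine the cohomological formula \cref{chiXL} for the total degree with the Betti number boundedness of \cref{general_boundedness_complex}, applied to two auxiliary relative proper stratified schemes built from $X$.

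First, I would reduce to the case where $\Lambda$ is a finite field via $\Lambda_0 := \Lambda/\mathfrak{m}_\Lambda$: by \cref{SS_change_coef} and \cref{Rk_ring}, both sides of \eqref{eq_Bounddmdeg_normal} are preserved under the base change $\cK \mapsto \cK \otimes_\Lambda^L \Lambda_0$, and the log conductor bound $\cE$ transfers to $\cK \otimes_\Lambda^L \Lambda_0$ because its cohomology sheaves are built from subquotients of $\cH^i(\cK) \otimes_\Lambda \Lambda_0$, and log conductors descend to subquotients of $P_K$-modules.

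Next, apply \cref{chiXL} to the smooth projective scheme $X_\sbar \hookrightarrow \bP_\sbar(E_\sbar)$ to get
\[
\totdeg_{i_\sbar}(CC(\cK)) = \chi(X_{\overline{\xi}_s}, \cK|_{X_{\overline{\xi}_s}}) - 2\chi(X_{\overline{\eta}_s}, \cK|_{X_{\overline{\eta}_s}}),
\]
where by construction $X_{\overline{\eta}_s}$ and $X_{\overline{\xi}_s}$ are the geometric generic fibers of the proper morphisms $p_X^\vee : Y_1 := X \times_{\bP_S(E)} Q_S \to \bP_S(E^\vee)$ and $q_X^\vee : Y_2 := Y_1 \times_{\bP_S(E^\vee)} F_S \to \bG_S(E^\vee,2)$. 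The schemes $Y_1, Y_2$ are thus proper relative stratified schemes (with pullback stratifications $p_X^{-1}\Sigma, q_X^{-1}\Sigma$) of relative dimensions $\leq n$. Since the projections $p_X : Y_1 \to X$ and $q_X : Y_2 \to X$ are smooth (both factor through projective bundles), the pullbacks $p_X^*\cK, q_X^*\cK$ are of finite tor-dimension, lie in $D^{[a,b]}$, satisfy $\Rk = \Rk\cK$, and have log conductors bounded by $p_X^*\cE, q_X^*\cE$: any smooth curve $C \to Y_i$ composes to a morphism $C \to X$, and pullback of coherent sheaves commutes with the torsion divisor used to bound log conductors.

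Applying \cref{general_boundedness_complex} to each of $(Y_i/\bullet, p_X^{-1}\Sigma)$ furnishes admissible functions $\fc_i$ and polynomials $\fd_i \in \bN[x]$ of degree $n$ bounding the absolute values of the two Euler characteristics by $\fd_i(\fc_i(p_X^*\cE)) \cdot \Rk\cK$. Setting $\fc(\cE) := \fc_1(p_X^*\cE) + \fc_2(q_X^*\cE)$ and $\fd(x) := \fd_2(x) + 2\fd_1(x) \in \bN[x]$ of degree $n$, admissibility of $\fc$ follows from that of $\fc_1, \fc_2$ together with the additivity of pullback on direct sums, while monotonicity of the $\fd_i$ yields \eqref{eq_Bounddmdeg_normal}. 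The ``in particular'' statement follows from \cref{CC_perv}: when $\cK$ is perverse the multiplicities $m_a$ are positive integers and $d_a, \deg D_a \geq 1$, so $|\totdeg(CC(\cK))| = \sum_a d_a m_a \deg D_a$ dominates both the number of irreducible components of $SS(\cK)$ and each multiplicity $m_a$. The main obstacle I expect is the clean verification that the log conductor bound survives the reduction $\cK \mapsto \cK \otimes_\Lambda^L \Lambda_0$ and the smooth pullbacks $p_X^*, q_X^*$; both follow from the subquotient stability of the Abbes--Saito ramification filtration, but have to be checked carefully.
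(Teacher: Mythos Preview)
Your proposal is correct and follows essentially the same route as the paper: reduce to finite field coefficients via \cref{SS_change_coef}, invoke \cref{chiXL} to express $\totdeg$ as a difference of Euler characteristics over the generic hyperplane section and the generic pencil, and bound each of these via \cref{general_boundedness_complex} applied to the two auxiliary proper families $p_X^\vee$ and $q_X^\vee$. The only cosmetic difference is that the paper records the sharper relative dimension $\leq n-1$ for $p_X^\vee$ (hence $\deg\fd_1=n-1$), while you use $\leq n$ for both; this does not affect the conclusion since the final polynomial has degree $n$ regardless, and your factor $2\fd_1+\fd_2$ is in fact the more accurate combination.
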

\begin{proof}
The final claim is a consequence of \eqref{eq_Bounddmdeg_normal}  and \cref{CC_perv}.
By \cref{SS_change_coef} and by \cref{Rk_ring}, both sides of \eqref{eq_Bounddmdeg_normal} depend on $\cK$ only via its reduction to the residue field of $\Lambda$.
Hence, we can suppose that $\Lambda$ is a finite field  of characteristic $\ell \neq p$.
By \cref{chiXL}, we have 
$$
|\totdeg_{i_{\sbar}}(CC(\cK)) |\leq |\chi(X_{\overline{\xi}_{s}},\cK|_{X_{\overline{\xi}_{s}}})| + 2\cdot |\chi(X_{\overline{\eta}_{s}},\cK|_{X_{\overline{\eta}_{s}}})|
$$
where $\overline{\eta}_{s}\to \bP_{\sbar}(E_{\sbar}^{\vee})$ is an algebraic geometric point  over the generic point of $\bP_{\sbar}(E_{\sbar}^{\vee})$ and where $\overline{\xi}_s\to  \bG_S(E^{\vee},2)  $ is an algebraic  geometric point  over  the generic point of $ \bG_{\sbar}(E_{\sbar}^{\vee},2)$.
By \cref{general_boundedness_complex} applied to the proper morphism
$$
p_X^{\vee}: X\times_{ \bP_S(E)} Q_S  \to \bP_S(E^{\vee})
$$
of relative dimension $\leq n-1$ endowed with $p_X^*\Sigma$, there is an admissible function $\fc_1 : \bQ[\Coh(X\times_{ \bP_S(E)} Q_S)]\to \bQ$ and $\fd_1\in \mathds{N}[x]$ of degree $n-1$  independent of $\sbar$ and 
$\cK$ such that
$$
|\chi(X_{\overline{\eta}_{s}},\cK|_{X_{\overline{\eta}_{s}}})|\leq 
\fd_1(\fc_1(p_X^*\cE))\cdot \Rk_{\Lambda}\cK \ .
$$
By \cref{general_boundedness_complex} applied to the proper morphism
$$
q_{X}^{\vee}:  ( X\times_{ \bP_S(E)} Q_S ) \times_{\bP_S(E^{\vee})} F_S \to \bG_S(E^{\vee},2) 
$$
of relative dimension $\leq n$ endowed with $q_X^*\Sigma$, there is an admissible function $\fc_2 : \bQ[\Coh( ( X\times_{ \bP_S(E)} Q_S ) \times_{\bP_S(E^{\vee})} F_S)]\to \bQ$ and $\fd_2\in \mathds{N}[x]$ of degree $n$ independent of $\sbar$ and $\cK$ such that
$$
|\chi(X_{\overline{\xi}_{s}},\cK|_{X_{\overline{\xi}_{s}}})|\leq \fd_2(\fc_2(q_X^*(\cE)))\cdot \Rk_{\Lambda}\cK  \ .
$$
The conclusion thus follows by putting $\fc:=\fc_1\circ p_X^*+\fc_2\circ q_X^*$ and $\fd:=2\fd_1+\fd_2$.  
\end{proof}

Similarly to \cite[Theorem 7.34]{HuTeyssierMinkowski},  in the absolute setting when $\cK$ is the extension by 0 of a locally constant constructible sheaf, \cref{Bounddmdeg_normal} admits the following slightly simpler formulation:

\begin{thm}\label{Bounddmdeg_smooth}
Let $X$ be a smooth projective scheme  of pure dimension $n$ over  $k$.
Let $D\subset X$ be an effective Cartier divisor and put $j : U:=X-D\hookrightarrow X$.
Let $i : X\hookrightarrow \bP_k^m$ be a closed immersion over $k$ satisfying \cref{relative_beilinson_assumption}.
Then, there is  $\fd\in \mathds{N}[x]$ of degree $n$ such that for every finite local ring $\Lambda$  of residue characteristic $\ell\neq p$  and every  $\cL\in \Loc_{\ft}(U,\Lambda)$, we have 
$$
| \totdeg_i(CC(j_!\cL)) | \leq \fd(\lc_D(\cL))\cdot \rk_{\Lambda} \cL \ . 
$$
In particular,  the number of irreducible components of $SS(j_!\cL)$ and the multiplicities of $CC(j_!\cL)$ are smaller than $\fd(\lc_D(\cL)) \cdot \rk_{\Lambda} \cL$.

\end{thm}

\begin{proof}
Combine \cref{Bounddmdeg_normal} with \cref{bounded_ramification_ex}.
\end{proof}

\begin{lem}\label{points_avoinding_hypersurface_absolute}
Let $E$ be a vector space over $k$ and let $m\geq \binom{\rk E-1+d}{d}$.
Then, the set of points $(x_1,\dots, x_m) \in   \bP_k(E)^m$ such that no hypersurface of degree $d$ in $\bP_k(E)$ contains all the $x_i$ contains a dense open subset.
\end{lem}

\begin{proof}
We can suppose that $m=\binom{\rk E-1+d}{d}$.
Let $i_d : \bP_k(E) \hookrightarrow  \bP^{m-1}_k$ be the $d$-Veronese embedding and let $V_{d}\subset \bP^{m-1}_k$ be its image.
Let $U\subset (\bP^{m-1}_k)^m$ be the dense open set of $m$ ordered points in generic position in $\bP^{m-1}_k$. 
That is, any $(x_1,\dots, x_m) \in U$ spans $\bP^{m-1}_k$.
Then, for every $(x_1,\dots, x_m) \in   \bP_k(E)^m$, we have
\begin{align*}
& \text{ no hypersurface of degree $d$ in $\bP_k(E)$ contains all the $x_i$} \\
\Longleftrightarrow &  
\text{ no hyperplane in $\bP^{m-1}_k$ contains all the $i_d(x_i)$} \\
\Longleftrightarrow & \text{ $(i_d(x_1),\dots, i_d(x_m))\in U$}.
\end{align*}
Hence, we are left to show that $U\cap V_{d}^{m}$ is  dense open in $V_{d}^m$.
Since $V_{d}^m$ is irreducible and since $U$ is open, it is enough to show that  $U\cap V_{d}^{m}$ is not empty.
Since $V_{d}$ is not contained in any hyperplane of $\bP^{m-1}_k$, one can recursively construct $m$ points on $V_{d}$ spanning $\bP^{m-1}_k$  and the conclusion follows.
\end{proof}

\begin{thm}\label{micro_local_Lefschetz_hyperplane}
Let $(X,\Sigma)$ be a projective stratified scheme over $k$ algebraically closed.
Then, there is a closed immersion $i : X \hookrightarrow \bP_k(E)$, an admissible function $\fc : \bQ[\Coh(X)]\to \bQ$ and  $\fd\in \mathds{N}[x]$ such that for every  $r\geq 1$, every $\cE\in \bQ[\Coh(X)]$, there is a dense open subset $U\subset (\bP_k(E)^{\vee})^{N}$ with $N:=\binom{\rk E-1+\fd(\fc(\cE) ) \cdot r}{\fd(\fc(\cE) ) \cdot r}$ satisfying the following property : for every finite local ring $\Lambda$ of residue characteristic $\ell\neq p$, every $(H_1,\dots, H_N)\in U$ and every $\cP \in \Perv_{\Sigma,tf}^{\leq r}(X,
\cE,\Lambda)$, there is $1\leq j\leq N$ such that $H_j \hookrightarrow \bP_k(E)$ is $SS(i_\ast\cP)$-transversal.
\end{thm}

\begin{proof}
Let $\iota : X\hookrightarrow Y$ be a closed embedding in some smooth projective scheme over $k$.
Let $\Sigma_Y$ be the stratification on $Y$ given by $Y-X$ and the $Z\in \Sigma$.
Choose a closed immersion $i_Y : Y \hookrightarrow \bP_k(E)$ satisfying \cref{relative_beilinson_assumption} and assume that \cref{micro_local_Lefschetz_hyperplane} holds for $(Y,\Sigma_Y,i_Y)$. 
Let 
$\fc_Y : \bQ[\Coh(Y)]\to \bQ$ and $\fd_Y\in \mathds{N}[x]$ as given by \cref{micro_local_Lefschetz_hyperplane}.
If we put $i := i_Y \circ \iota :  X \hookrightarrow \bP_k(E)$, then $\fc_Y \circ \iota_* : \bQ[\Coh(X)]\to \bQ$ and $\fd_Y\in \mathds{N}[x]$ do the job.
Hence, we are left to prove \cref{micro_local_Lefschetz_hyperplane} in the case where $X$ is smooth over $k$ and  we have a closed immersion $i : X \hookrightarrow \bP_k(E)$ satisfying \cref{relative_beilinson_assumption}.\\ \indent
Apply \cref{Bounddmdeg_normal} to $(X,\Sigma)$ and $i : X \hookrightarrow \bP_k(E)$ with $a=-\dim X$ and $b=0$.
Let $\fc : \bQ[\Coh(X)]\to \bQ$ and  $\fd\in \mathds{N}[x]$ be the corresponding data and pick $r\geq 1$ and $\cE\in \bQ[\Coh(X)]$.
By \cref{points_avoinding_hypersurface_absolute}, there is a dense open subset $U\subset (\bP_k(E)^{\vee})^{N}$  of points $(H_1,\dots, H_N)$ such that no hypersurface of $\bP_k(E)^{\vee}$ of degree $\leq \fd(\fc(\cE) ) \cdot r$ contains all the $H_i$.
Let $\Lambda$ be a finite local ring of residue characteristic $\ell\neq p$, let $(H_1,\dots, H_N)\in U$ and $\cP \in \Perv_{\Sigma,tf}^{\leq r}(X,
\cE,\Lambda)$.
By \cref{bound_deg_SS}, we have
$$
\deg p^{\vee}(\bP(SS(i_{\ast}\cP))) \leq |\totdeg_{i}(CC(\cP)) |\leq 
\fd(\fc(\cE)) \cdot r \ .
$$
Hence, there is $1\leq j\leq N$ such that $H_j \notin  p^{\vee}(\bP(SS(i_{\ast}\cP)))$.
Thus $H_j \hookrightarrow \bP_k(E)$ is $SS(i_\ast\cP)$-transversal in virtue of \cref{transversality_H}.
\end{proof}

\section{Wild Lefschetz theorem for finite coefficients}


\begin{defin}\label{monodromy_group}
Let $U$ be a connected scheme of finite type over $k$ and let $\ubar\to U$ be a geometric point.
Let $\cL\in \Loc_{\ft}(U,\Lambda)$ and let $\rho : \pi_1(U,\ubar)\to \GL(\cL_{\ubar})$ be the corresponding representation.
We define the \textit{monodromy group of $\cL$ at $\ubar$} as the image of $\rho$.
\end{defin}

\begin{defin}\label{same_monodromy_group}
In the setting of \cref{monodromy_group}, let $f : V\to U$ be a morphism between connected schemes of finite type over  $k$.
Let $\overline{v}\to V$ be a geometric point and put $\ubar=f(\overline{v})$.
We say that $\cL$ and $f^*\cL$ \textit{have the same monodromy group} if the monodromy group of $\cL$ at $\ubar$ is the monodromy group of $f^*\cL$ at $\overline{v}$.
\end{defin}

\begin{rem}
Since two geometric points of $V$ are connected by a path, having the same monodromy group does not depend on the choice of a base point of $V$.
\end{rem}

The following lemma is elementary :

\begin{lem}\label{same_monodromy_group_ss_abstract}
Let $G_1\to G_2$ be a morphism of groups. 
Let $r\geq 0$, let $A$ be a commutative ring and let $\rho : G_2\to \GL_r(A)$ be a representation.
Assume that $\rho$ and $\rho|_{G_1}$ have the same image.
Then, if  $\rho$ is simple (resp. semi-simple), the representation $\rho|_{G_1}$ is simple (resp. semi-simple).
\end{lem}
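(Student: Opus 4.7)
The plan is to reduce everything to the elementary observation that the property for an $A$-submodule $M \subseteq A^r$ to be stable under a subgroup $H \subseteq \GL_r(A)$ depends only on $H$ itself, not on the way $H$ is presented as the image of some abstract group. Since by hypothesis $\rho(G_1) = \rho(G_2)$ as subgroups of $\GL_r(A)$, the lattice of $\rho$-stable $A$-submodules of $A^r$ coincides with the lattice of $\rho|_{G_1}$-stable ones. All the content of the lemma flows from this identification.

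From this, the simple case is immediate: if $A^r$ admits no nontrivial proper $\rho$-stable $A$-submodule, the same holds for $\rho|_{G_1}$. For the semi-simple case, I would choose a decomposition $A^r = \bigoplus_i V_i$ into $\rho$-simple sub-$A[G_2]$-modules. Each $V_i$ is in particular $\rho(G_2)$-stable, hence $\rho(G_1)$-stable by the preceding observation, so it is a $\rho|_{G_1}$-subrepresentation. Moreover, the images of $G_1$ and $G_2$ in $\GL(V_i)$ still coincide, since both are obtained from the common image $\rho(G_1) = \rho(G_2) \subseteq \GL(A^r)$ by restriction to the invariant submodule $V_i$. Applying the already established simple case to each $V_i$ shows that each $V_i$ is $\rho|_{G_1}$-simple, and therefore the decomposition $A^r = \bigoplus_i V_i$ exhibits $\rho|_{G_1}$ as semi-simple.

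There is essentially no obstacle beyond fixing conventions: once one agrees that \textit{simple} and \textit{semi-simple} refer to the $A[G]$-module structure on $A^r$, the argument is purely formal and uses no property of $A$, $r$, or the map $G_1 \to G_2$ other than the assumed equality of images. The only minor point requiring attention is the verification that the images of $G_1$ and $G_2$ agree after restriction to each $V_i$, which is automatic from the fact that restriction of matrices to a common invariant subspace is a group homomorphism.
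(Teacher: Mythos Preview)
The paper does not actually prove this lemma: it is introduced with the sentence ``The following lemma is elementary'' and left without proof. Your argument is correct and is exactly the kind of one-line justification the authors are implicitly invoking---the stable submodules, and hence the notions of simplicity and semi-simplicity, depend only on the image subgroup $\rho(G_2)=\rho|_{G_1}(G_1)\subset\GL_r(A)$.
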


\begin{cor}\label{same_monodromy_group_ss}
In the setting of \cref{same_monodromy_group}, assume that  $\cL$ and $f^*\cL$ have the same monodromy group.
Then, if  $\cL$ is simple (resp. semi-simple), the pullback $f^*\cL$ is simple (resp. semi-simple).
\end{cor}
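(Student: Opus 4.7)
The plan is to derive \cref{same_monodromy_group_ss} as an immediate consequence of the abstract group-theoretic \cref{same_monodromy_group_ss_abstract}, with no étale-sheaf input beyond the functoriality of monodromy representations.

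First, since $\ubar = f(\overline{v})$ and both $U$ and $V$ are connected, the functoriality of the étale fundamental group supplies a canonical homomorphism $f_{\ast} : \pi_1(V,\overline{v}) \to \pi_1(U,\ubar)$. If $\rho : \pi_1(U,\ubar) \to \GL(\cL_{\ubar})$ denotes the representation associated to $\cL$, then by construction the representation associated to $f^{\ast}\cL$ at $\overline{v}$ is exactly the composition $\rho \circ f_{\ast}$, acting on the same underlying $\Lambda$-module $\cL_{\ubar} = (f^{\ast}\cL)_{\overline{v}}$. The assumption that $\cL$ and $f^{\ast}\cL$ have the same monodromy group (in the sense of \cref{same_monodromy_group}) then translates into the equality $\Image(\rho) = \Image(\rho \circ f_{\ast})$ of subgroups of $\GL(\cL_{\ubar})$.

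With this reformulation in hand, we are in the precise situation of \cref{same_monodromy_group_ss_abstract}, applied to $G_1 := \pi_1(V,\overline{v})$, $G_2 := \pi_1(U,\ubar)$, the map $f_{\ast}$, and the representation $\rho$. Its conclusion yields directly that if $\rho$ is simple (resp. semi-simple), so is $\rho|_{G_1} = \rho \circ f_{\ast}$; translating back through \cref{monodromy_group}, this is exactly the statement that $f^{\ast}\cL$ is simple (resp. semi-simple).

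No real obstacle arises: the only substance is in \cref{same_monodromy_group_ss_abstract}, which is itself essentially tautological, since any $A$-submodule of $A^r$ stabilized by $\Image(\rho)$ is stabilized by $\Image(\rho \circ f_{\ast})$ and conversely, so the lattices of invariant submodules for $\rho$ and for $\rho|_{G_1}$ coincide, making both simplicity and the existence of an invariant complement pass from one representation to the other. The only point to take care of is to observe that two geometric base points of $V$ yield conjugate monodromy groups, so that the statement is independent of the chosen $\overline{v}$, which is already noted in the remark preceding \cref{same_monodromy_group_ss_abstract}.
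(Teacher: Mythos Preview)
Your proof is correct and follows exactly the paper's approach: the corollary is stated immediately after \cref{same_monodromy_group_ss_abstract} with no separate proof, and is meant to be the direct specialization you give, taking $G_1=\pi_1(V,\overline{v})\to G_2=\pi_1(U,\ubar)$ via $f_\ast$ and $\rho$ the monodromy representation of $\cL$.
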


\begin{construction}\label{representation_local_system}
Let $U$ be a connected scheme of finite type over $k$ and let $\ubar\to U$ be a geometric point.
Let $\cL\in \Loc_{\ft}(U,\Lambda)$ and let $\rho : \pi_1(U,\ubar)\to \GL(\cL_{\ubar})$ be the corresponding representation.
Let $G$ be the monodromy group of $\cL$ at $\ubar$.
We define $\Lambda[\cL,\ubar]$ as the object of $\Loc_{\ft}(U,\Lambda)$ corresponding to the representation 
$$
\pi_1(U,\ubar)\to G\to \GL(\Lambda[G])
$$
where the first arrow is induced by $\rho$ and where the second arrow is the regular representation of $G$.
\end{construction}

\begin{rem}\label{independence_in_u}
The isomorphism class of $\Lambda[\cL,\ubar]$ does not depend on $\ubar$.
\end{rem}

\begin{lem}\label{representation_local_system_lc_bound}
In the setting of \cref{representation_local_system}, let $j : U\to X$ be an open immersion, let $\cE\in \bQ[\Coh(X)]$ and assume that $j_!\cL$ has log conductors bounded by $\cE$.
Then, so does $j_!\Lambda[\cL,\ubar]$.
\end{lem}

\begin{proof}
Immediate from the fact that the action of $\pi_1(U,\overline{x})$ on the fibre of $\Lambda[\cL,\ubar]$ at a geometric point $\overline{x}\to U$ factors through that on $\cL_{\overline{x}}$.
\end{proof}

\begin{lem}\label{same_monodromy_criterion}
Let $G_1\to G_2$ be a morphism of groups. 
Let $r\geq 0$, let $A$ be a commutative ring and let $\rho : G_2\to \GL_r(A)$ be a representation with finite image $G$. 
Let $\phi : G\to \GL(A[G])$ be the regular representation of $G$, so that $G_1$ and $G_2$ act on $A[G]$.
Then $A[G]^{G_1}=A[G]^{G_2}$ if and only if the representations $\rho$ and $\rho|_{G_1}$ have the same image.
\end{lem}

\begin{proof}
The converse implication is obvious.
Let us prove the direct implication.
Let $H$ be the image of  $\rho|_{G_1}: G_1\to  \GL_r(A)$.  
We have $H\subset G$ and we have to show that $H=G$.
By assumption, we have
$$
\sum_{h\in H} h\in A[G]^{G_1}=A[G]^{G_2}=A\cdot  \sum_{g\in G} g \ .
$$
This is possible only if $H=G$.
\end{proof}

\begin{lem}\label{sheaf_criterion_same_monodromy}
Let $f : V\to U$ be a morphism between connected schemes of finite type over $k$.
Let $\ubar\to U$ be a geometric point.
Let $\cL\in \Loc_{tf}(U,\Lambda)$.
The following conditions are equivalent :
\begin{enumerate}\itemsep=0.2cm
\item The sheaves $\cL$ and $f^*\cL$ have the same monodromy group.
\item The canonical morphism
$$
H^0(U,\Lambda[\cL,\ubar]) \to H^0(V,f^*\Lambda[\cL,\ubar]) 
$$
is an isomorphism.
\end{enumerate}
\end{lem}

\begin{proof}
Immediate from \cref{same_monodromy_criterion}.
\end{proof}

The following proposition provides a criterion for \cref{sheaf_criterion_same_monodromy}-(2) to hold in terms of the singular support.

\begin{prop}\label{transversality_and_Hom}
Let $X$ be a projective scheme of finite type of pure dimension $n\geq 2$ over $k$ algebraically closed. 
Let $j : U\hookrightarrow X$ be an affine open immersion with $U$ smooth over $k$.
Let $i : X \hookrightarrow \mathds{P}_k$ be a closed immersion.
Let $H\subset \mathds{P}_k$ be a hyperplane such that $U\cap H$ is smooth of pure dimension $n-1$. 
Let $\cL,\cM \in \Loc_{\ft}(U,\Lambda)$ such that $H \hookrightarrow \mathds{P}_k$ is $SS(i_*j_!\cHom(\cL,\cM))$-transversal.
Then, the canonical morphism
$$
\Hom(\cL,\cM)\to \Hom(\cL|_{U\cap H},\cM|_{U\cap H})
$$
is an isomorphism.
\end{prop}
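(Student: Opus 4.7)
Set $\cF := \cHom(\cL,\cM) \in \Loc_{\ft}(U,\Lambda)$, using that $\cL$ is locally dualizable. Then
\[
\Hom(\cL,\cM) = H^0(U,\cF), \qquad \Hom(\cL|_{U\cap H},\cM|_{U\cap H}) = H^0(U\cap H,\cF|_{U\cap H}),
\]
and the map in the statement is the restriction map. Let $\cG := i_*Rj_*\cF$ on $\mathds{P}_k$. Since $U$ is smooth of pure dimension $n$ and $j$ is affine, $\cF[n]$ is perverse on $U$ and $\cG[n]$ is perverse on $\mathds{P}_k$. Applying \cref{automatic_transversality} to the perverse sheaf $\cF[n]$ gives $SS(\cG) = SS(i_*j_!\cF)$, so the hypothesis translates into: $i_H : H\hookrightarrow\mathds{P}_k$ is $SS(\cG)$-transversal.

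The first step is to identify $R\Gamma(H,\cG|_H)$ with $R\Gamma(U\cap H,\cF|_{U\cap H})$. I would apply \cref{basechange} to the cartesian square
\[
\begin{tikzcd}
U\cap H \rar \dar & U \dar \\
H \rar & \mathds{P}_k
\end{tikzcd}
\]
with $\cK = \cF$ on $U$ and $g : U \to \mathds{P}_k$ equal to $i \circ j$. The closed immersion $i_H$ is separated; the codimensions agree ($c_{i_H} = c_{f'} = -1$) because $U\cap H$ is smooth of pure dimension $n-1$; and the inner closed immersion $f' : U\cap H \hookrightarrow U$ is automatically $SS(\cF)$-transversal because $\cF$ is locally constant, hence $SS(\cF)$ is the zero section. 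The assumed $SS(\cG)$-transversality of $i_H$ then gives the base change isomorphism $i_H^*\cG \iso Rg'_*(\cF|_{U\cap H})$, and taking global sections on $H$ yields the desired identification.

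For the second step, apply the excision triangle against the affine complement $u : \mathds{A}^m = \mathds{P}_k \setminus H \hookrightarrow \mathds{P}_k$ (with $m = \dim\mathds{P}_k$):
\[
u_!u^*\cG \longrightarrow \cG \longrightarrow i_{H*}i_H^*\cG \xrightarrow{+1}.
\]
Taking $R\Gamma(\mathds{P}_k,-)$ and using $R\Gamma(\mathds{P}_k,\cG) = R\Gamma(U,\cF)$ together with the previous step produces the distinguished triangle
\[
R\Gamma_c(\mathds{A}^m,\cG|_{\mathds{A}^m}) \longrightarrow R\Gamma(U,\cF) \longrightarrow R\Gamma(U\cap H,\cF|_{U\cap H}) \xrightarrow{+1}.
\]
Since $\cG|_{\mathds{A}^m}[n]$ is perverse on the affine variety $\mathds{A}^m$, Artin's vanishing with compact supports gives $H^j_c(\mathds{A}^m,\cG|_{\mathds{A}^m}) = 0$ for $j < n$; the assumption $n\geq 2$ then forces $H^0_c = H^1_c = 0$, and the long exact sequence yields the desired isomorphism on $H^0$. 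The main technical point is the base change step, where one must transport the transversality hypothesis from $i_*j_!\cF$ to $\cG$ via \cref{automatic_transversality} and carefully verify the codimension equality and inner $SS$-transversality hypotheses of \cref{basechange}; once this is in place, the perversity of $\cG[n]$ and the bound $n\geq 2$ close the argument via Artin vanishing.
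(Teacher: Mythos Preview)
Your proof is correct and follows essentially the same route as the paper: transfer the transversality hypothesis from $i_*j_!\cF$ to $\cG=i_*Rj_*\cF$ via \cref{automatic_transversality}, apply \cref{basechange} to the square $U\cap H\to U\to \mathds{P}_k\leftarrow H$ to identify $\cG|_H$ with the pushforward from $U\cap H$, and finish with the excision triangle on $\mathds{P}_k\setminus H$ together with Artin vanishing for the perverse sheaf $\cG|_{\mathds{A}}[n]$. The only cosmetic difference is that the paper phrases the last step as ``weak Lefschetz'' and singles out $H^1_c$, whereas you (more precisely) note that Artin vanishing kills both $H^0_c$ and $H^1_c$ once $n\geq 2$.
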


\begin{proof}
Consider the diagram with cartesian squares
$$
	\begin{tikzcd}
U\cap H\arrow{r}\arrow{d}{j'}&U\arrow{d}{j}  \\
X\cap H\arrow{r}  \arrow{d}{i'} &X \arrow{d}{i} \\
 H\arrow{r} & \mathds{P}_k    \ .
	\end{tikzcd}
$$
We have 
$$
\Hom(\cL,\cM) = H^0(U,\cHom(\cL,\cM))=H^0(\mathds{P}_k,i_* Rj_*\cHom(\cL,\cM))
$$
and 
$$
\Hom(\cL|_{U\cap H},\cM|_{U\cap H}) =H^0(H,i'_* Rj'_*\cHom(\cL|_{U\cap H},\cM|_{U\cap H})) \ .
$$
Note that $\cHom(\cL,\cM)[n]$ is perverse of finite tor-dimension on $U$.
\cref{automatic_transversality} gives
$$
   SS(i_*Rj_*\cHom(\cL,\cM))= SS(i_*j_!\cHom(\cL,\cM) ) \ .
$$
Hence, $H\hookrightarrow\mathds{P}_k$ is $SS(i_*Rj_*\cHom(\cL,\cM))$-transversal.
On the other hand, the map $U\cap H\hookrightarrow U$ is tautologically $SS(\cHom(\cL,\cM))$-transversal.
By assumption, we have
$$
\dim U\cap H -\dim U = -1 = \dim H - \dim \mathds{P}_k
$$
where all the above schemes are smooth of pure dimension.
Hence, \cref{basechange} implies that the base change 
$$
(i_* Rj_* \cHom(\cL,\cM) )|_H \to i'_* Rj'_* \cHom(\cL|_{U\cap H},\cM|_{U\cap H}) 
$$
is an isomorphism.
Put $\mathds{A}_k:=\mathds{P}_k-H$ and let $\jmath :  \mathds{A}_k \hookrightarrow  \mathds{P}_k$ is the inclusion.
Then, there is a distinguished triangle
$$
\jmath _! \jmath^* i_* Rj_* \cHom(\cL,\cM)  \to i_* Rj_* \cHom(\cL,\cM)  \to i'_* Rj'_* \cHom(\cL|_{U\cap H},\cM|_{U\cap H})  \ .
$$
Hence, to conclude the proof of \cref{transversality_and_Hom}, we are left to show that 
$$
H^1_c(\mathds{A}_k,\jmath^* i_* Rj_* \cHom(\cL,\cM) )
$$
vanishes.
Since $\jmath^* i_* Rj_* \cHom(\cL,\cM)[n]$ is  perverse on $\mathds{A}_k$ and since $\mathds{A}_k$ has dimension at least $2$, the sought-after vanishing thus follows from Artin's vanishing theorem for perverse sheaves \cite[Corollaire 4.1.2]{BBD}, \cite[XIV, Théorème 3.1]{SGA4-3}.
\end{proof}
 

\begin{thm}\label{Wild_Lefschetz_hyperplane}
Let $X$ be a projective  scheme of pure dimension $n\geq 2$ over $k$ algebraically closed.
Let $j : U\hookrightarrow X$ be an affine open immersion with $U$ smooth connected over $k$.
Then, there is a closed immersion $i : X\hookrightarrow \bP_k(E)$, there is an admissible  function $\fc : \bQ[\Coh(X)]\to \bQ$ and $\fd\in \mathds{N}[x]$ such that for every finite local ring  $\Lambda$ of residue characteristic $\ell \neq p$, every  $r\geq 1$, every $\cE\in \bQ[\Coh(X)]$,  
there is a dense open subset $V\subset (\bP_k(E)^{\vee})^{N}$ with 
$$
N:=\binom{\rk E-1+\fd(\fc(\cE) ) \cdot |\GL_r(\Lambda)|}{\fd(\fc(\cE) ) \cdot |\GL_r(\Lambda)|}
$$ 
satisfying the following  : for every $(H_1,\dots, H_N)\in V$ and every $\cL\in \Loc^{\leq r}_{tf}(U,\Lambda)$ where $j_!\cL$ has log conductors bounded by $\cE$, there is $1\leq a \leq N$ such that $\cL$ and $\cL|_{U\cap H_a}$ have the same monodromy group.
\end{thm}

\begin{proof}
Put $\Sigma=\{U,X-U\}$.
Let $i : X \hookrightarrow \bP_k(E)$,  $\fc : \bQ[\Coh(X)]\to \bQ$ and  $\fd\in \mathds{N}[x]$ as given by  \cref{micro_local_Lefschetz_hyperplane} applied to $(X,\Sigma)$.
Let  $r\geq 1$ and  $\cE\in \bQ[\Coh(X)]$.
Let  $V\subset (\bP_k(E)^{\vee})^{N}$ with $N:=\binom{\rk E-1+\fd(\fc(\cE) ) \cdot |\GL_r(\Lambda)|}{\fd(\fc(\cE) ) \cdot |\GL_r(\Lambda)|}$ be the dense open subset attached to $(|\GL_r(\Lambda)|,\cE)$ by  \cref{micro_local_Lefschetz_hyperplane}.
At the cost of shrinking $V$, we can assume that for every  $(H_1,\dots, H_N)\in V$ and every $1\leq a\leq N$,  the hyperplane $H_a$ is transverse to $U$ and $U\cap H_a$ is connected.
Let us prove that $V$ meets our requirements.
\\ \indent
Let $(H_1,\dots, H_N)\in V$ and $\cL\in \Loc^{\leq r}_{tf}(U,\Lambda)$ where $j_!\cL$ has log conductors bounded by $\cE$.
Let $\ubar\to U$ be a geometric point.
Then \cref{representation_local_system_lc_bound} implies that $j_!\Lambda[\cL,\ubar]$ has log conductors bounded by $\cE$ as well.
Since $j : U\hookrightarrow X$ is affine, we deduce
$$
j_!\Lambda[\cL,\ubar][n]\in \Perv_{\Sigma, tf}(X,\cE,\Lambda) \ .
$$
Since $j_!\Lambda[\cL,\ubar][n]$  has rank smaller than $|\GL_r(\Lambda)|$
there is $1\leq a \leq N$ such that $H_a \hookrightarrow \bP_k(E)$ is $SS(i_\ast j_!\Lambda[\cL,\ubar])$-transversal.
By \cref{transversality_and_Hom}, we deduce that the canonical morphism
$$
H^0(U,\Lambda[\cL,\ubar]) \to H^0(U\cap H_a,\Lambda[\cL,\ubar]|_{U\cap H_a}) 
$$
is an isomorphism.
By \cref{sheaf_criterion_same_monodromy}, the sheaves  $\cL$ and $\cL|_{U\cap H_a}$ have the same monodromy group.
\end{proof}

\textit{Proof of  \cref{thm_5}}. 
     We are going to use \cref{Wild_Lefschetz_hyperplane}.
     Since $D:=X-U$ is the support of an effective Cartier divisor, the open immersion $j : U\hookrightarrow X$ is affine.
    Let then $i,\mu,P$ as given by \cref{Wild_Lefschetz_hyperplane} and fix a finite local ring $\Lambda$ of residue characteristic $\ell \neq p$.
    Let  $D_1,\dots, D_a$ be the irreducible components of $D$, let $R=m_1 D_1+\dots +m_a D_a$ be an effective Cartier divisor supported on $D$ and fix $r\geq 0$.
     We need to define an integer $N(\deg R,r)$ as in \cref{thm_5}.
     Saying that $\cL\in \Loc_{tf}(U,\Lambda)$ has log conductors bounded by $R$ in the sense of \cref{def_provisoire} means that $j_!\cL$ has log conductors bounded by $\cO_R$ in the sense of \cref{bounded_conductor}. 
     By \cref{Wild_Lefschetz_hyperplane} applied to $\cE=\cO_R$, if we put 
     $$
     M(R,r,\Lambda) := \binom{\dim \bP_k+\fd(\fc(\cO_R) ) \cdot |\GL_r(\Lambda)|}{\fd(\fc(\cO_R) ) \cdot |\GL_r(\Lambda)|} \ ,
     $$
     there is a dense open subset 
     $V\subset (\bP_k^{\vee})^{M(R,r,\Lambda)}$ 
satisfying the following  : for every $(H_1,\dots, H_{M(R,r,\Lambda)})\in V$ and every $\cL\in \Loc^{\leq r}_{tf}(U,\Lambda)$ where $\cL$ has log conductors bounded by $R$, there is $1\leq a \leq M(R,r,\Lambda)$ such that $\cL$ and $\cL|_{U\cap H_a}$ have the same monodromy group.
     On the other hand, we have 
\[
\mu(\cO_R)=\mu(\bigoplus_{i=1}^a m_i \cdot \cO_{D_i})\leq \deg R\cdot \max_i \{\mu(\cO_{D_i})\} \ .
\]
If we define $N : \bN^2 \to \bN$ by 
\[
(m,r) \to \binom{\dim \bP_k+\fd(m\cdot \max_i \{\mu(\cO_{D_i})\} ) \cdot |\GL_r(\Lambda)|}{\fd(m\cdot \max_i \{\mu(\cO_{D_i})\} ) \cdot |\GL_r(\Lambda)|} \ ,
\]
then $M(R,r,\Lambda)\leq N(\deg R,r)$ and the conclusion follows.

\section{Moduli of multi-flags}\label{multi_flag}

\begin{notation}
For a scheme $S$, we will denote by $\Sch_S$ the category of schemes over $S$.
\end{notation}

\begin{recollection}
Let $S$ be a scheme of finite type over $k$ and let $E$ be a locally free sheaf of $\cO_S$-modules on $S$.
For integers $1\leq n_1<\dots < n_r \leq \rk E$, we denote by  $\Fl_S(E,n_1,\dots, n_r)$ the scheme over $S$ of $(n_1,\dots, n_r)$-flags in $E$.
Recall that $\Fl_S(E,n_1,\dots, n_r)$ is smooth over $S$.
By definition, $\Fl_S(E,n_1,\dots, n_r)$ represents the functor sending $f : T\to S$ to the set of isomorphism classes of epimorphisms 
$$
f^*E \to E_r \to \cdots \to E_1
$$
where  $E_i$ is a locally free sheaf of rank $n_i$ on $T$ for every $1\leq i \leq r$.
If $r=1$, we note $\bG_S(E,n)$ for $\Fl_S(E,n)$ and if $r=n=1$, we note $\bP_S(E)$ for $\bG_S(E,1)$.
\end{recollection}

\begin{construction}\label{hypersurface_vanishing_at_point}
Let $S$ be a scheme of finite type over $k$ and let $E$ be a locally free sheaf of $\cO_S$-modules on $S$.
Let $\pi : \bP_S(E) \to S$ be the structural morphism.
For $d\geq 1$, the universal locally free rank one quotient
$$
\pi^*E \to \cO_{\bP_S(E) }(1) 
$$
induces a short exact sequence of locally free sheaves
\begin{equation}\label{degree_d_sequence}
0 \to K \to \pi^* \Sym^d E \to \cO_{\bP_S(E) }(d) \to 0
\end{equation}
Intuitively, the fiber $K_x$ of $K$ at a point $x\in \bP_S(E)$  over $s\in S$ is the hyperplane of homogeneous polynomials of degree $d$ of $\bP_s(E_s)$  vanishing at $x$. 

\end{construction}


\begin{lem}\label{points_avoinding_hypersurface}
In the setting of \cref{hypersurface_vanishing_at_point}, for every $m\geq \binom{\rk E-1+d}{d}$, there is a dense open subset 
$$
U\subset \bigtimes_{S}^m \bP_S(E)
$$ 
of the  $m$-th self product of $\bP_S(E)$ in $\Sch_S$ such that for every $(x_1,\dots, x_m) \in U(\sbar)$ over a geometric algebraic point $\sbar \to S$, no hypersurface of degree at most $d$ in $\bP_{\sbar }(E_{\sbar })$ contains all the $x_i$.
\end{lem}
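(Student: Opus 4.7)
The plan is to globalize the absolute Lemma \ref{points_avoinding_hypersurface_absolute} by constructing $U$ as the non-degeneracy locus of a specific morphism of locally free sheaves on $X := \bigtimes_S^m \bP_S(E)$, and then verifying density fiberwise.

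First I would introduce, on $X$, the morphism
\[
\phi : \pi_X^{\ast}\Sym^d E \longrightarrow \bigoplus_{i=1}^m \pr_i^{\ast}\cO_{\bP_S(E)}(d)
\]
where $\pi_X : X \to S$ is the structural map and $\pr_i : X \to \bP_S(E)$ is the $i$-th projection, the $i$-th factor of $\phi$ being the pullback by $\pr_i$ of the surjection in \eqref{degree_d_sequence}. Setting $N:=\binom{\rk E-1+d}{d}$, the source of $\phi$ is locally free of rank $N$ and the target of rank $m\geq N$. I let $U\subset X$ be the locus where $\phi$ has rank $N$, cut out by the non-vanishing of the $N\times N$-minors of $\phi$; this is an open subscheme of $X$. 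At a closed point $(x_1,\dots,x_m)\in X$ lying over a geometric point $\sbar\to S$, the fiber of $\phi$ is the natural map $\Sym^d E_{\sbar}\to \bigoplus_i \Sym^d E_{\sbar}/K_{x_i}$, whose kernel is exactly the space of homogeneous polynomials of degree $d$ on $\bP_{\sbar}(E_{\sbar})$ vanishing at all $x_i$. Hence $(x_1,\dots,x_m)\in U$ if and only if no hypersurface of degree exactly $d$ in $\bP_{\sbar}(E_{\sbar})$ contains all the $x_i$.

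Second, I would check density. Applying \cref{points_avoinding_hypersurface_absolute} to $E_{\sbar}$ for every geometric point $\sbar\to S$ shows that $U_{\sbar}$ is dense open in $X_{\sbar}=\bP_{\sbar}(E_{\sbar})^m$. Since $\bP_S(E)\to S$ is smooth projective with geometrically irreducible fibres, so is $X\to S$, and the irreducible components of $X$ are thus in bijection with those of $S$: each component of $X$ is the closure of its generic fibre over the generic point of a component of $S$. Because $U$ meets every geometric generic fibre in a dense open, it contains every generic point of $X$, hence is dense in $X$.

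Finally, I would upgrade "no hypersurface of degree $d$" to "no hypersurface of degree at most $d$". If $E=0$ there is nothing to prove; otherwise, given a geometric point $\sbar\to S$ and a nonzero linear form $\ell\in E_{\sbar}$, any hypersurface cut out by $g\in \Sym^{d'} E_{\sbar}$ with $d'<d$ vanishing on $x_1,\dots,x_m$ would yield the hypersurface $g\cdot \ell^{d-d'}$ of degree exactly $d$ vanishing on the same points, contradicting $(x_1,\dots,x_m)\in U(\sbar)$. There is no genuine obstacle here; the only subtlety is keeping track of geometric irreducibility of fibres to pass from "dense in each fibre" to "dense in $X$", which is handled by the standard fact that $\bP_S(E)\to S$ has geometrically integral fibres.
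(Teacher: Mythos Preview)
Your proof is correct and follows essentially the same approach as the paper: both construct the evaluation map $p^*\Sym^d E \to \bigoplus_i \pi_i^*\cO_{\bP_S(E)}(d)$ and take $U$ to be the locus where it is fibrewise injective, then invoke \cref{points_avoinding_hypersurface_absolute} on fibres to obtain density. The only cosmetic difference is that the paper first restricts to a dense open where the image and cokernel of this map are locally free (so that the kernel commutes with base change) and then takes the vanishing locus of the kernel sheaf, whereas you work directly with the rank-$N$ degeneracy locus of $\phi$; your formulation is slightly cleaner and avoids that intermediate step.
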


\begin{proof}
It is enough to find a dense open subset $U\subset \bigtimes_{S}^m \bP_S(E)$ such that for every  $(x_1,\dots, x_m) \in U(\sbar )$  over a geometric algebraic point $\sbar \to S$, no hypersurface of degree exactly $d$ in $\bP_{\sbar }(E_{\sbar })$ contains all the $x_i$.
For $1\leq i \leq m$, let 
$$
\pi_i : \bigtimes_{S}^m \bP_S(E) \to \bP_S(E) 
$$
be the projection on the $i$-th factor and let 
$$
p : \bigtimes_{S}^m \bP_S(E) \to S
$$
be the structural morphism.
Consider the exact sequence of coherent sheaves 
$$
0 \to \bigcap_{i=1}^m  \pi_i^{\ast} K \to p^* \Sym^d E \to \bigoplus_{i=1}^m \pi_i^{\ast} \cO_{\bP_S(E) }(d)  
$$
on $\bigtimes_{S}^m \bP_S(E)$.
Let $F$ be the image of the last arrow and consider the short exact sequence
$$
0 \to F \to \bigoplus_{i=1}^m \pi_i^{\ast} \cO_{\bP_S(E) }(d)  \to G \to 0 \ .
$$
Choose a dense open subset $V\subset \bigtimes_{S}^m \bP_S(E)$ such that $F$ and $G$ are locally free on $V$.
Then, for every  $x=(x_1,\dots,x_m)\in V(\sbar )$  over a geometric algebraic point $\sbar \to S$, there is a canonical isomorphism 
$$
x^* \bigcap_{i=1}^m  \pi_i^{\ast} K \simeq \bigcap_{i=1}^m  x_i^* K 
$$
where the right-hand side is the subspace of $\Sym^d E_{\sbar }$ of homogeneous polynomials of degree $d$ vanishing at $x_1,\dots, x_m$.
To prove \cref{points_avoinding_hypersurface}, we are thus left to show the existence of a dense open subset $U \subset V$ on which $\bigcap_{i=1}^m  \pi_i^{\ast} K$ vanishes.
Since the vanishing locus of a coherent sheaf is an open subset, we are left to show the existence of a dense set of points in $V$ on which $\bigcap_{i=1}^m  \pi_i^{\ast} K$ vanishes.
This follows from \cref{points_avoinding_hypersurface_absolute}.
\end{proof}

\begin{defin}\label{defin_tree}
A \textit{rooted tree} is a finite poset $\Tr$ with an initial object $0$ such that for every $v\in \Tr$, the subset  $\Tr^{\leq v} :=\{w\in \Tr \text{ with } w\leq v\}$ is totally ordered.
For a vertex $v\in \Tr,$ the natural number $d(v):=|\Tr^{\leq v}|-1$ is the  \textit{depth} of $v$.
The \textit{depth} of $\Tr$ is the maximal depth of its vertices.
A  \textit{branch} of $\Tr$ is a maximal totally ordered subset of $\Tr$.
A \textit{subtree} of $\Tr$ is a subposet $\Tr'\subset \Tr$ which is closed downwards.


\end{defin}

\begin{construction}\label{construction_multiflag}
Let $S$ be a scheme of finite type over $k$ and let $E$ be a non zero locally free sheaf of $\cO_S$-modules on $S$.
Let $\Tr$ be a rooted tree and let $W : \Tr^{\op} \to \mathds{N}^{\ast}$ be a strictly decreasing morphism of posets sending 0 to $\rk E$.
Define
$$
\Fl_S(E,\Tr,W) : \Sch^{\op}_S  \to \Ens
$$
as the functor sending $f : T \to S$ to the set of isomorphism classes of diagrams 
$$
E_{\bullet} : \Tr\to \Coh(T)
$$ 
where 
\begin{enumerate}\itemsep=0.2cm
\item $E_0 = f^*E$ and for every $v\in \Tr$, the sheaf $E_v$ is  locally free of rank $W(v)$.

\item The arrows of $\Tr$ are sent to epimorphisms of $\Coh(T)$.
\end{enumerate}

\end{construction}

\begin{rem}\label{subspace_viewpoint}
The diagram $E_{\bullet} : \Tr\to \Coh(T)$ is equivalent to an ordered set of projective subspaces over $T$
$$
F_v \subset \bP_T(f^*E),  \    v\in \Tr
$$
where $F_v$ has relative dimension $W(v)-1$ over $T$.
We will tacitly use both descriptions.
\end{rem}

\begin{rem}
If $W=\rk E-d(-)$ where $d(-)$ is the depth function, we simply note $\Fl_S(E,\Tr)$  for $\Fl_S(E,\Tr,W)$.
\end{rem}

\begin{example}\label{multiflag_n}
If $\Tr$  has a single branch, then $\Fl_S(E,\Tr)$  is a partial flag functor.
In particular, $\Fl_S(E,\Tr)$ is a smooth scheme of finite type over $S$.
\end{example}

\begin{example}\label{multiflag_1}
If every element of $\Tr$ distinct from $0$ has depth $1$, then
$$
\Fl_S(E,\Tr,W)=\prod_{v\in \Tr\setminus \{0\}}\mathds{G}_S(E,W(v)) 
$$
where the product is performed in the category of schemes over $S$.
\end{example}

\begin{lem}\label{flat_tree_is_a_scheme}
The functor $\Fl_S(E,\Tr,W)$ is a scheme of finite type over $S$.
\end{lem}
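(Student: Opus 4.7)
The plan is to prove the claim by induction on the cardinality of $\Tr$, exhibiting $\Fl_S(E,\Tr,W)$ as an iterated tower of relative Grassmannian bundles over $S$. Along the way, one carries as inductive data not only the representability, but also the existence of tautological locally free quotients $E_v^{\taut}$ of rank $W(v)$ for each $v \in \Tr$ fitting into a universal diagram as in \cref{construction_multiflag}.

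When $\Tr = \{0\}$, the functor sends $f : T \to S$ to the single diagram $E_0 = f^*E$, so $\Fl_S(E,\Tr,W)$ is represented by $S$ itself with tautological sheaf $E_0^{\taut} = E$. For the inductive step, I would pick a maximal vertex $v \in \Tr$ and set $\Tr' := \Tr \setminus \{v\}$; since $v$ is maximal, $\Tr'$ is still a tree (closed downwards), and $W|_{\Tr'}$ inherits the strict decreasingness. By induction, $X' := \Fl_S(E,\Tr',W|_{\Tr'})$ is a scheme of finite type over $S$ carrying tautological sheaves $E_w^{\taut}$ for each $w \in \Tr'$, and in particular a locally free quotient $E_{p(v)}^{\taut}$ of rank $W(p(v))$ at the immediate predecessor $p(v)$ of $v$. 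Since $W(v) < W(p(v))$, the relative Grassmannian
$$
X := \bG_{X'}\bigl(E_{p(v)}^{\taut},\,W(v)\bigr)
$$
is a smooth scheme of finite type over $X'$, hence over $S$, and comes equipped with a universal epimorphism from (the pullback of) $E_{p(v)}^{\taut}$ onto a locally free quotient of rank $W(v)$. Pulling back the remaining $E_w^{\taut}$ from $X'$ and setting $E_v^{\taut}$ to be this new universal quotient, one obtains a tautological diagram over $X$ extending the $\Tr'$-diagram by $v$. Verifying the universal property of $X$ is a direct unwinding: a $T$-point of $\Fl_S(E,\Tr,W)$ is the same as a $T$-point of $\Fl_S(E,\Tr',W|_{\Tr'})$ plus an epimorphism from $f^*E_{p(v)}^{\taut}$ onto a locally free sheaf of rank $W(v)$, which is exactly a $T$-point of $X$.

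There is essentially no obstacle: the construction is a routine fibered iteration of Grothendieck's relative Grassmannian, and the only points requiring attention are (i)~checking that the strict decreasingness of $W$ ensures that each $\bG_{X'}(E_{p(v)}^{\taut},W(v))$ makes sense with a strict quotient, and (ii)~checking that the resulting scheme does not depend on the choice of maximal vertex $v$, which is immediate from the universal property. Since $\Tr$ is finite, the induction terminates and produces the desired representing scheme.
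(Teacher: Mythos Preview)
Your proof is correct but takes a different route from the paper. The paper argues by induction on the \emph{depth} of $\Tr$: it writes $\Tr$ as a pushout of the subtree $\Tr^{\leq d-1}$ with the totally ordered chains $\Tr^{\leq v}$ for $v$ of maximal depth, and deduces that $\Fl_S(E,\Tr,W)$ is a fiber product of the inductively known $\Fl_S(E,\Tr^{\leq d-1},\ldots)$ with products of ordinary partial flag varieties (\cref{multiflag_n}). You instead induct on the \emph{cardinality} of $\Tr$, peeling off a single maximal vertex at each step and realizing the larger moduli as one relative Grassmannian over the smaller one. Your argument is more elementary and in effect anticipates the paper's subsequent \cref{Fl_and_Gr}, which isolates exactly this Grassmannian-bundle description; the paper's pushout approach is a bit more structural and treats all top-depth vertices at once, but both produce the same iterated tower of smooth projective bundles over $S$.
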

\begin{proof}
We argue by recursion on the depth $d$ of $\Tr$.
The case of depth $0$ follows from \cref{multiflag_n}.
Assume that $\Tr$ has depth $d>0$.
Let $\Tr^{\leq d-1} \subset \Tr$ be the subtree of vertices of depth smaller than $d-1$ and $\Tr^{ d} \subset \Tr$ be the set of vertices of depth $d$.
Then, the square
$$
  \begin{tikzcd}
  \bigsqcup_{v\in  \Tr^{d} }  \Tr^{< v}  \arrow{r} \arrow{d}   &  \Tr^{\leq d-1}\arrow{d}    \\
\bigsqcup_{v\in  \Tr^{d} }  \Tr^{\leq v}\arrow{r}                   & \Tr
  \end{tikzcd}
$$
is a pushout in the category of posets.
Thus, the induced square 
$$
  \begin{tikzcd}
  \Fl_S(E,\Tr,W)  \arrow{r} \arrow{d}   &  \Fl_S(E,\Tr^{\leq  d-1},W|_{\Tr^{\leq d-1}})\arrow{d}    \\
\prod_{v\in \Tr^{d}  }  \Fl_S(E,\Tr^{\leq  v},W|_{\Tr^{\leq v}}) \arrow{r}                   & \prod_{v\in \Tr^{d}  }   \Fl_S(E,\Tr^{< v},W|_{\Tr^{< v}}) 
  \end{tikzcd}
$$
in $\Fun(\Sch^{\op}_S,\Ens)$ is cartesian.
By recursion assumption, the upper right functor is a scheme of finite type over $S$.
By \cref{multiflag_n}, so are the bottom functors.
The conclusion thus follows.
\end{proof}

\begin{lem}\label{Fl_and_Gr}
In the setting of \cref{construction_multiflag}, let $v,w\in \Tr$ where  $w$ is an immediate successor of $v$ and is  maximal in $\Tr$.
Put $\Tr^{\circ} := \Tr \setminus \{w\}$ and $W^{\circ}:=W|_{\Tr \setminus \{w\}}$. 
Let 
$$
\cE_{\bullet} : \Tr^{\circ} \to \Coh(\Fl_S(E,\Tr^{\circ},W^{\circ}))
$$ 
be the universal object of $\Fl_S(E,\Tr^{\circ},W^{\circ})$.
Then the restriction
$$
\Fl_S(E,\Tr,W)\to \Fl_S(E,\Tr^{\circ},W^{\circ})
$$
exhibits $\Fl_S(E,\Tr,W)$ as the Grassmannian of $W(w)$-plans in $\cE_{v}$ over $\Fl_S(E,\Tr^{\circ} ,W^{\circ})$.
\end{lem}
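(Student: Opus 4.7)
The plan is to establish the claim via the Yoneda lemma by exhibiting a natural bijection at the level of $T$-points, for every $S$-scheme $T$. Both sides are schemes of finite type over $S$: the left-hand side by \cref{flat_tree_is_a_scheme} and the right-hand side by standard representability of relative Grassmannians. It thus suffices to produce a natural isomorphism of functors of points compatible with the projection to $\Fl_S(E,\Tr^{\circ},W^{\circ})$.

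First, I would unwind the definitions. A $T$-point of $\Fl_S(E,\Tr,W)$ over $f : T \to S$ is a functor $E_\bullet : \Tr \to \Coh(T)$ sending arrows to epimorphisms, with $E_0 = f^*E$ and $E_u$ locally free of rank $W(u)$ for every $u \in \Tr$. The key combinatorial observation is that, since $w$ is maximal in $\Tr$ and is an immediate successor of $v$, the subset $\Tr^{\leq w}$ equals $\Tr^{\leq v} \sqcup \{w\}$. Consequently, every arrow of $\Tr$ with target $w$ factors uniquely through the single arrow $v \to w$, and there are no arrows out of $w$. By functoriality, specifying the extension of $E_\bullet|_{\Tr^{\circ}}$ to $E_\bullet$ therefore amounts to specifying the single epimorphism $E_v \twoheadrightarrow E_w$ with $E_w$ locally free of rank $W(w)$; no further consistency condition is imposed.

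By the universal property of $\Fl_S(E,\Tr^{\circ},W^{\circ})$, the restriction $E_\bullet|_{\Tr^{\circ}}$ corresponds bijectively to a morphism $g : T \to \Fl_S(E,\Tr^{\circ},W^{\circ})$ over $S$, and under this bijection $E_v = g^*\cE_v$. The remaining datum of an epimorphism $g^*\cE_v \twoheadrightarrow E_w$ onto a locally free sheaf of rank $W(w)$ is precisely the datum of a lift of $g$ to the relative Grassmannian of rank-$W(w)$ quotients of $\cE_v$ over $\Fl_S(E,\Tr^{\circ},W^{\circ})$. Assembling these two bijections produces the desired natural isomorphism of functors over $\Fl_S(E,\Tr^{\circ},W^{\circ})$.

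The main obstacle is simply the combinatorial bookkeeping: one must verify that the maximality of $w$ and the immediate-successor condition together ensure that the only new data introduced when passing from $\Tr^{\circ}$ to $\Tr$ is the arrow $E_v \twoheadrightarrow E_w$, with all other required arrows $E_u \to E_w$ (for $u < v$) being forced by composition with the arrows already encoded in $E_\bullet|_{\Tr^{\circ}}$. Once this point is clear, the remainder of the argument is a routine application of Yoneda.
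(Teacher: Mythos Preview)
Your proof is correct and follows essentially the same approach as the paper: both arguments identify $T$-points of $\Fl_S(E,\Tr,W)$ over $\Fl_S(E,\Tr^{\circ},W^{\circ})$ with the datum of a single locally free rank-$W(w)$ quotient of (the pullback of) $\cE_v$, using the maximality of $w$ to ensure that concatenating this quotient onto the restricted diagram imposes no further constraints. Your version is slightly more explicit about the combinatorial bookkeeping (arrows $E_u\to E_w$ for $u<v$ being forced by composition), but the underlying Yoneda argument is the same.
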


\begin{proof}
For $f : T\to \Fl_S(E,\Tr^{\circ},W^{\circ})$, we need to construct a bijection
$$
\alpha_T : \Gr_{\Fl_S(E,\Tr^{\circ},W^{\circ})}(\cE_{v},W(w))(T)\to \Fl_S(E,\Tr,W)(T)
$$
natural in $T$.
An element of the left hand side is the datum of an epimorphism
\begin{equation}\label{eqFl_and_Gr}
h : f^*\cE_{v}  \to F
\end{equation}
where $F$ is a locally free sheaf of rank $W(w)$ on $T$.
Since $w$ is maximal in  $\Tr$, concatenating the pullback diagram
$$
f^* \cE_{\bullet} : \Tr^{\circ} \to \Coh(T)
$$
with \eqref{eqFl_and_Gr} gives rise to a diagram
$$
\alpha_T(h) : \Tr \to \Coh(T)
$$
natural in the choice of $f : T\to \Fl_S(E,\Tr^{\circ},W^{\circ})$.
One readily checks that $\alpha$ is an isomorphism of functors.
\end{proof}

\begin{lem}\label{restriction_subtree_smooth}
In the setting of \cref{construction_multiflag}, let $\Tr' \subset \Tr$ be a subtree  and put $W':=W|_{\Tr' }$.
Then, the induced morphism over $S$
$$
\Fl_S(E,\Tr,W)\to \Fl_S(E,\Tr',W')
$$
is smooth projective surjective of relative pure dimension 
$$
\sum_{v\in \Tr} \sum_{\substack{w\in  \Tr\setminus \Tr' ,v\leq w  \\ d(w)=d(v)+1}}  W(w)(W(v)-W(w)) \ .
$$
In particular $\Fl_S(E,\Tr,W)$ is smooth proper surjective over $S$ of relative pure dimension 
$$
\sum_{\substack{v\leq w \\ d(w)=d(v)+1}}  W(w)(W(v)-W(w)) \ .
$$
\end{lem}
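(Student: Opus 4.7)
The plan is to argue by induction on $|\Tr \setminus \Tr'|$, peeling off one vertex at a time via \cref{Fl_and_Gr}. The base case $\Tr = \Tr'$ is trivial: the map is the identity and the dimension sum is empty.

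For the inductive step, the first task is to produce a vertex $w \in \Tr \setminus \Tr'$ which is maximal not only in $\Tr \setminus \Tr'$ but in the whole of $\Tr$. To do this, I would pick any $w_0 \in \Tr \setminus \Tr'$ and let $w$ be a maximal element of the finite non-empty subset $\{v \in \Tr : v \geq w_0\}$. By downward closedness of the subtree $\Tr'$, such a $w$ cannot belong to $\Tr'$, for otherwise $w_0 \leq w$ would force $w_0 \in \Tr'$. Setting $\Tr'' := \Tr \setminus \{w\}$, the maximality of $w$ in $\Tr$ implies that $\Tr''$ is again downward closed and contains $\Tr'$, so the induction hypothesis applies to the pair $(\Tr'', \Tr')$.

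Let $v$ denote the unique immediate predecessor of $w$ in $\Tr$, which lies in $\Tr''$ by maximality of $w$. By \cref{Fl_and_Gr}, the forgetful map $\Fl_S(E,\Tr,W) \to \Fl_S(E,\Tr'',W|_{\Tr''})$ identifies with the Grassmannian bundle of rank $W(w)$ locally free quotients of the universal sheaf $\cE_v$ over the base. Since $W$ is strictly decreasing, $0 < W(w) < W(v) = \rk \cE_v$, so this Grassmannian bundle is smooth, projective and surjective of relative pure dimension $W(w)(W(v) - W(w))$. Composing with the morphism $\Fl_S(E,\Tr'',W|_{\Tr''}) \to \Fl_S(E,\Tr',W')$ supplied by the induction hypothesis yields the smoothness, projectivity and surjectivity claims; the relative dimension formula follows from the observation that pairs $(v,w)$ with $v \leq w$ and $d(w) = d(v)+1$ correspond bijectively to vertices $w \neq 0$ equipped with their unique immediate predecessor $v$. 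The ``in particular'' assertion is then immediate by taking $\Tr' = \{0\}$, in which case the unique datum parametrized by $\Fl_S(E,\Tr',W|_{\Tr'})$ is the identity $f^*E \to f^*E$, so the latter is canonically isomorphic to $S$. I do not anticipate a substantive obstacle; the whole argument is a direct structural consequence of \cref{Fl_and_Gr} together with the elementary combinatorial lemma on the existence of the vertex $w$.
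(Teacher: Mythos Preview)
Your proof is correct and follows essentially the same approach as the paper: both reduce to the case $\Tr' = \Tr \setminus \{w\}$ for $w$ maximal in $\Tr$ and then invoke \cref{Fl_and_Gr} to identify the forgetful map with a Grassmannian bundle. Your write-up is simply more explicit, spelling out the induction, the existence of a maximal $w \in \Tr \setminus \Tr'$ that is maximal in $\Tr$, and the identification $\Fl_S(E,\{0\},W|_{\{0\}}) \simeq S$.
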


\begin{proof}
Since $\Tr'$ is obtained from $\Tr$ by  successive deletions of maximal vertices, we can suppose that  $\Tr' =\Tr \setminus \{w\}$ where $w\in \Tr$ is maximal.
Then \cref{restriction_subtree_smooth} follows from \cref{Fl_and_Gr}.
\end{proof}

\begin{lem}\label{restriction_subtree_smooth_irr}
In the setting of \cref{construction_multiflag},  assume that $S$ is irreducible.
Then so is $\Fl_S(E,\Tr,W)$.
\end{lem}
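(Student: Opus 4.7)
The plan is to argue by induction on the number of vertices $|\Tr|$. The base case $|\Tr|=1$ forces $\Tr=\{0\}$ and $W(0)=\rk E$, so the only object parameterized is the identity epimorphism $f^*E \to f^*E$. Hence $\Fl_S(E,\Tr,W)=S$, which is irreducible by hypothesis.

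For the inductive step, assume $|\Tr|\geq 2$ and pick a maximal vertex $w\in \Tr$. Put $\Tr^{\circ}:=\Tr\setminus\{w\}$ and $W^{\circ}:=W|_{\Tr^{\circ}}$; this is again a tree together with a strictly decreasing weight function, so $\Fl_S(E,\Tr^{\circ},W^{\circ})$ is defined and by induction irreducible. Let $v\in \Tr^{\circ}$ be the (unique) immediate predecessor of $w$. By \cref{Fl_and_Gr}, the restriction morphism
$$
\pi : \Fl_S(E,\Tr,W)\to \Fl_S(E,\Tr^{\circ},W^{\circ})
$$
exhibits $\Fl_S(E,\Tr,W)$ as the relative Grassmannian $\Gr(\cE_v,W(w))$ of $W(w)$-plans in the universal locally free sheaf $\cE_v$ of rank $W(v)$ on $\Fl_S(E,\Tr^{\circ},W^{\circ})$.

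Since $\cE_v$ is locally free, the morphism $\pi$ is Zariski-locally on the target isomorphic to the projection from a product with an ordinary Grassmannian. In particular $\pi$ is smooth, flat, surjective, with geometrically irreducible fibers. The conclusion then follows from the standard fact that a flat surjective morphism with irreducible base and geometrically irreducible fibers has irreducible total space: any irreducible component of the source dominates the irreducible base by flatness (hence openness), so it contains the generic fiber, and the generic fiber being irreducible forces there to be only one component.

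I do not anticipate a serious obstacle: the entire argument is a clean induction powered by \cref{Fl_and_Gr}, together with the Zariski-local triviality of Grassmannian bundles attached to locally free sheaves. The only mild care needed is to verify that the hypotheses of \cref{Fl_and_Gr} are met at each step, i.e., that the vertex we remove is maximal and is an immediate successor of some vertex $v\in \Tr^{\circ}$ (which is automatic once $|\Tr|\geq 2$ and $w$ is chosen maximal).
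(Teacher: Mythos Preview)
Your proof is correct and follows essentially the same approach as the paper: induction on $|\Tr|$, removing a maximal vertex, invoking \cref{Fl_and_Gr} to identify the multi-flag space with a Grassmannian bundle over the smaller one, and concluding via the standard fact that a flat surjective morphism with irreducible base and (geometrically) irreducible fibers has irreducible source. The paper phrases this last step slightly differently (going-down plus irreducibility of the generic fiber), but the content is the same.
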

\begin{proof}
Recall that by going-down property, for every flat morphism $f : X\to S$ over an irreducible base and with irreducible generic fiber, the source $X$ is irreducible.
To prove the irreducibility of $\Fl_S(E,\Tr,W)$, we argue recursively on the cardinality of $\Tr$.
If $\Tr=\{0\}$, we have $\Fl_S(E,\Tr,W)\simeq S$ and there is nothing to prove.
Let $w\in \Tr$ be a maximal element.
Put $\Tr^{\circ} := \Tr \setminus \{w\}$ and $W^{\circ}:=W|_{\Tr \setminus \{w\}}$. 
Then, \cref{Fl_and_Gr} ensures that  $\Fl_S(E,\Tr,W)$ identifies canonically with a Grassmannian over $\Fl_S(E,\Tr^{\circ},W^{\circ})$.
By recursion assumption, $\Fl_S(E,\Tr^{\circ},W^{\circ})$ is irreducible.
We thus conclude with the above observation and the fact that the grassmannian over a field is irreducible.
\end{proof}

\begin{lem}\label{fiber_product_grass}
In the setting of \cref{construction_multiflag}, let  $v\in \Tr$ such that every immediate successor $w_1,\dots, w_n$ of $v$ is maximal.
Let $\Tr'\subset \Tr$ be the subtree obtained from $\Tr$ by removing the immediate successors of $v$ and put $W':=W|_{\Tr'}$. 
Let 
$$
\cE_{\bullet} : \Tr' \to \Coh(\Fl_S(E,\Tr',W'))
$$ 
be the universal object of $\Fl_S(E,\Tr',W')$.
Then, the induced morphism over $S$
$$
\Fl_S(E,\Tr,W)\to \Fl_S(E,\Tr',W')
$$
exhibits $\Fl_S(E,\Tr,W)$ as the fiber product over $\Fl_S(E,\Tr',W')$ of the Grassmannian of $W(w_i)$-plans in $\cE_{v}$ over $\Fl_S(E,\Tr',W')$ where $1\leq i \leq n$.
\end{lem}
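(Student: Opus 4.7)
My plan is to reduce to \cref{Fl_and_Gr} by removing the vertices $w_1,\ldots,w_n$ one at a time. Define inductively $\Tr^{(0)}:=\Tr$ and $\Tr^{(i)}:=\Tr^{(i-1)}\setminus\{w_i\}$, with $W^{(i)}:=W|_{\Tr^{(i)}}$, so that $\Tr^{(n)}=\Tr'$ and $W^{(n)}=W'$. At each step $i$, the vertex $w_i$ is still maximal in $\Tr^{(i-1)}$ (since originally all $w_j$ were maximal in $\Tr$ and removing other leaves cannot create relations) and remains an immediate successor of $v\in\Tr^{(i-1)}$. Hence \cref{Fl_and_Gr} applies and exhibits the forgetful map
$$
\Fl_S(E,\Tr^{(i-1)},W^{(i-1)})\to \Fl_S(E,\Tr^{(i)},W^{(i)})
$$
as the Grassmannian of $W(w_i)$-plans in the universal sheaf at $v$ on $\Fl_S(E,\Tr^{(i)},W^{(i)})$.

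The key observation is that this universal sheaf at $v$ on each intermediate moduli is the pullback of $\cE_v$ from $\Fl_S(E,\Tr',W')$. Indeed, since $v\in\Tr'\subset\Tr^{(i)}$ for all $i$, the sheaf at $v$ on $\Fl_S(E,\Tr^{(i)},W^{(i)})$ is by construction the pullback of $\cE_v$ along the structural morphism $\Fl_S(E,\Tr^{(i)},W^{(i)})\to\Fl_S(E,\Tr',W')$. Consequently, each step of the tower is the pullback to $\Fl_S(E,\Tr^{(i)},W^{(i)})$ of the Grassmannian $\Gr_{\Fl_S(E,\Tr',W')}(\cE_v,W(w_i))$. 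A tower of pullback bundles over a common base is the fiber product of the bundles over that base, which gives the claim.

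Alternatively, and perhaps more cleanly, one can verify the statement directly at the level of functors of points. For $f:T\to S$, an element of $\Fl_S(E,\Tr,W)(T)$ is a diagram $E_\bullet:\Tr\to\Coh(T)$ as in \cref{construction_multiflag}. Because each $w_i$ is maximal and is an immediate successor of $v$ only, the datum at $w_i$ is an epimorphism $E_v\twoheadrightarrow E_{w_i}$ with $E_{w_i}$ locally free of rank $W(w_i)$, and these $n$ epimorphisms are mutually independent. Thus giving $E_\bullet$ is equivalent to giving its restriction $E_\bullet|_{\Tr'}\in\Fl_S(E,\Tr',W')(T)$ together with $n$ independent Grassmannian quotients of the resulting sheaf at $v$. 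This is manifestly the functor represented by the asserted fiber product.

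I don't foresee a main obstacle: the statement is a formal consequence of \cref{Fl_and_Gr} once one notes that distinct maximal leaves attached to the same parent contribute decoupled data. The only care needed is to keep track of the identification of the universal sheaf at $v$ across the various strata, which is automatic from the compatibility of universal objects under the forgetful maps between flag moduli.
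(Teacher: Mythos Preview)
Your proposal is correct. Both of your arguments reduce, as does the paper's, to \cref{Fl_and_Gr} together with the independence of the data attached to the distinct maximal leaves $w_1,\dots,w_n$ above $v$. The paper packages this slightly differently: it observes that the square of posets
\[
\begin{tikzcd}
\bigsqcup_{i=1}^n \Tr' \arrow{r}\arrow{d} & \Tr' \arrow{d} \\
\bigsqcup_{i=1}^n \Tr'_i \arrow{r} & \Tr
\end{tikzcd}
\]
with $\Tr'_i:=\Tr'\cup\{w_i\}$ is a pushout, hence induces a pullback square of flag schemes, and then applies \cref{Fl_and_Gr} to each factor $\Fl_S(E,\Tr'_i,W_i)\to\Fl_S(E,\Tr',W')$. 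Your first argument unwinds this same fiber product one leaf at a time, and your second argument is the direct functor-of-points verification underlying both; there is no substantive difference.
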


\begin{proof}
For $1\leq i \leq n$, put $\Tr'_i = \Tr' \cup \{w_i\}$ and $W_i :=W|_{\Tr'_i }$.
Then, the pushout square of posets 
$$
  \begin{tikzcd}
\bigsqcup_{i=1}^n    \Tr'         \arrow{r} \arrow{d}   &  \Tr'   \arrow{d}    \\
\bigsqcup_{i=1}^n   \Tr'_i   \arrow{r}                   & \Tr
  \end{tikzcd}
$$
gives rise to a pullback of schemes 
$$
  \begin{tikzcd}
\Fl_S(E,\Tr,W)          \arrow{r} \arrow{d}   &  \Fl_S(E,\Tr',W')     \arrow{d}    \\
\prod_{i=1}^n  \Fl_S(E,\Tr'_i,W_i)     \arrow{r}                   & \prod_{i=1}^n \Fl_S(E,\Tr',W')
  \end{tikzcd}
$$
where the right vertical arrow is the diagonal map.
Hence, there is a canonical isomorphism
$$
\Fl_S(E,\Tr,W)  \simeq \Fl_S(E,\Tr'_1,W_1)\times_{\Fl_S(E,\Tr',W')} \cdots \times_{\Fl_S(E,\Tr',W')}    \Fl_S(E,\Tr'_n,W_n)
$$
Thus, the conclusion follows from \cref{Fl_and_Gr}.
\end{proof}

\begin{defin}\label{ramified_tree}
Let $r,d\geq 1$.
We say that a rooted tree $\Tr$  \textit{ramifies enough with respect to $(r,d)$} if $\Tr$ has depth at most $r-2$, has at least two vertices and if every non maximal vertex $v\in \Tr$ has at least 
$$
\binom{r-1-d(v)+d}{d}
$$ 
immediate successors, where $d(-) : \Tr \to \mathds{N}$ is the depth function. 
\end{defin}

\begin{rem}
The above definition would make sense for $d$ rational number by replacing $d$ by $\lfloor d \rfloor$ in the above formula.
\end{rem}

\begin{rem}\label{smallest_ramified_tree}
The smallest rooted tree  $\Tr$ whose maximal vertices have all depth $\delta\leq r-2$ and ramifying enough with respect to $(r,d)$ has exactly
$$
N(r,d,a):=\prod_{i=0}^{a-1} \binom{r-1-i+d}{d}
$$
vertices of depth $a$ for every $0\leq a \leq \delta$.
In that case, \cref{restriction_subtree_smooth} gives that for every field $k$ and every finite dimensional vector space $E$ over $k$, we have
$$
\dim \Fl_k(E,\Tr) = \sum_{a=1}^{\delta} (\rk E-a)\cdot \prod_{i=0}^{a-1} \binom{r-1-i+d}{d} \ .
$$
We denote by $C(r,d,\delta)$ the above dimension.
\end{rem}

\begin{lem}\label{multiple_avoidance_hypersurface}
Let $S$ be a scheme of finite type over $k$, let $E$ be a locally free sheaf of $\cO_S$-modules on $S$ and let $d\geq 1$.
Let $\Tr$ be a rooted  tree ramifying enough with respect to $(\rk E,d)$.
Then, there is a dense open subset $U \subset \Fl_S(E,\Tr)$ such that for every geometric algebraic point $\sbar \to S$, every $F_{\bullet}\in U(\sbar)$ and every non maximal vertex $v\in \Tr$, no hypersurface of $F_v^{\vee}$ of degree at most $d$  contains all the $F_w$,  for $w$ immediate successor of $v$.
\end{lem}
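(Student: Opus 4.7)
My plan is to reduce to \cref{points_avoinding_hypersurface} applied relatively at each non-maximal vertex of $\Tr$, and then intersect the resulting dense open subsets.

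First I would fix a non-maximal vertex $v \in \Tr$ with immediate successors $w_1, \ldots, w_n$ and set $\Tr_v := \Tr^{\leq v} \cup \{w_1, \ldots, w_n\}$. The condition at $v$ involves only $F_v$ and the $F_{w_i}$, so it factors through the forgetful morphism $\pi_v : \Fl_S(E, \Tr) \to \Fl_S(E, \Tr_v)$, which is smooth and surjective by \cref{restriction_subtree_smooth}. It thus suffices to exhibit a dense open $U'_v \subset \Fl_S(E, \Tr_v)$ realizing the condition at $v$ and then set $U_v := \pi_v^{-1}(U'_v)$, which will be dense open by smoothness and surjectivity of $\pi_v$.

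Since the $w_i$ are all maximal in $\Tr_v$ and are the immediate successors of $v$ there, I would invoke \cref{fiber_product_grass} to identify $\Fl_S(E, \Tr_v)$ with the $n$-fold self fiber product of $\bG_{\Fl_S(E, \Tr^{\leq v})}(\cE_v, W(v) - 1)$ over $\Fl_S(E, \Tr^{\leq v})$, where $\cE_v$ denotes the universal locally free quotient of rank $W(v) = \rk E - d(v)$. This Grassmannian of codimension-one quotients of $\cE_v$ is a projective bundle of relative dimension $W(v)-1$ over $\Fl_S(E, \Tr^{\leq v})$, whose fiber over a geometric point parametrizes hyperplanes in $F_v$ and so identifies with $F_v^{\vee}$. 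Applying \cref{points_avoinding_hypersurface} with base $\Fl_S(E, \Tr^{\leq v})$ and the rank-$W(v)$ locally free sheaf $\cE'$ for which this Grassmannian is $\bP(\cE')$, the numerical threshold becomes
\[
n \geq \binom{W(v) - 1 + d}{d} = \binom{\rk E - 1 - d(v) + d}{d},
\]
which is precisely the ramifying-enough hypothesis. This produces the desired dense open $U'_v$.

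Finally, taking $U := \bigcap_v U_v$ over all non-maximal $v \in \Tr$ yields a finite intersection of dense opens in $\Fl_S(E, \Tr)$, hence a dense open (checked irreducible-component-wise via \cref{restriction_subtree_smooth_irr} if $S$ is not irreducible). The only step that requires care is the identification of the Grassmannian $\bG(\cE_v, W(v)-1)$ with a projective bundle $\bP(\cE')$ in a manner compatible with the notion of degree-$d$ hypersurface used in \cref{points_avoinding_hypersurface}; this is a classical Grassmannian/projective bundle duality, so it does not represent a genuine obstacle, merely a bookkeeping issue with conventions.
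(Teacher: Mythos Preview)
Your proposal is correct and follows essentially the same approach as the paper: fix a non-maximal vertex $v$, pass to a subtree where the immediate successors of $v$ are maximal, invoke \cref{fiber_product_grass} to identify the moduli with a self fiber product of $\bG(\cE_v,\rk\cE_v-1)\simeq\bP(\cE_v^\vee)$, apply \cref{points_avoinding_hypersurface}, and intersect over $v$. The only organizational difference is that you restrict directly to the minimal subtree $\Tr_v=\Tr^{\leq v}\cup\{w_1,\dots,w_n\}$, whereas the paper first truncates $\Tr$ at depth $d(v)+1$ and then removes the $w_i$; both are valid subtrees to which \cref{restriction_subtree_smooth} applies, and the going-down/flatness argument for pulling back dense opens is the same.
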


\begin{proof}
Let $v\in \Tr$ be a non maximal vertex.
Since the intersection of a finite number of dense open subsets is again dense open, it is enough to find a dense open subset $U_v \subset \Fl_S(E,\Tr)$ such that for every geometric algebraic point $\sbar \to S$ and every $F_{\bullet}\in U_v(\sbar)$, no hypersurface of $F_v^{\vee}$ of degree at most $d$  contains all the $F_w$,  for $w$ immediate successor of $v$.
Let $\Tr'\subset \Tr$ be the subtree obtained by removing the vertices of depth at least $d(v)+2$.
By \cref{restriction_subtree_smooth}, the restriction morphism
$$
\Fl_S(E,\Tr)\to \Fl_S(E,\Tr')
$$
is flat.
By going-down property,  the pre-image of any dense open subset of $\Fl_S(E,\Tr')$ is dense open in $\Fl_S(E,\Tr)$.
Hence, at the cost of replacing $\Tr$ by $\Tr'$, we can suppose that every immediate successor of $v$ is maximal in $\Tr$.
In that case,  let $\Tr'\subset \Tr$ be the subtree obtained from $\Tr$ by removing the immediate successors of $v$.
Let 
$$
\cE_{\bullet} : \Tr' \to \Coh(\Fl_S(E,\Tr'))
$$ 
be the universal object of $\Fl_S(E,\Tr')$.
Then \cref{fiber_product_grass} gives a canonical isomorphism
$$
\Fl_S(E,\Tr)\simeq  \bG_{\Fl_S(E,\Tr')}(\cE_v, \rk \cE_v-1) \times_{\Fl_S(E,\Tr')} \cdots  \times_{\Fl_S(E,\Tr')}  \bG_{\Fl_S(E,\Tr')}(\cE_v, \rk \cE_v-1)
$$
where each factor corresponds to an immediate successor of $v$. 
Thus, 
$$
\Fl_S(E,\Tr)\simeq  \bP_{\Fl_S(E,\Tr')}(\cE_v^\vee) \times_{\Fl_S(E,\Tr')} \cdots  \times_{\Fl_S(E,\Tr')}  \bP_{\Fl_S(E,\Tr')}(\cE_v^\vee) \ .
$$
Since $\Tr$ ramifies enough, $v$ has at least 
$$
\binom{\rk E-1-d(v)+d}{d} =\binom{\rk \cE_v-1+d}{d}
$$
successors.
Thus, the existence of $U_v$  follows from \cref{points_avoinding_hypersurface}.
\end{proof}

The following lemma gives some flexibility in the choice of the dense open subset $U$ from \cref{multiple_avoidance_hypersurface}.

\begin{lem}\label{dense_invers_image}
In the setting of \cref{construction_multiflag},
let
\begin{equation}\label{eq_dense_invers_image}
\Fl_S(E,\Tr,W) \to \prod_{v\in \Tr\setminus \{0\}} \bG_S(E,W(v)) 
\end{equation}
be the morphism of schemes over $S$ induced by the injections of posets $\{0,v\}\subset \Tr$ for $v\in \Tr\setminus \{0\}$.
For every $1\leq a \leq \rk E$, choose a dense open subset $U_a\subset \bG_S(E,a)$.
Then, the inverse image of 
\begin{equation}\label{eq_dense_invers_image_open}
 \prod_{v\in \Tr\setminus \{0\}} U_{W(v)} 
\end{equation}
by \eqref{eq_dense_invers_image} is dense open in $\Fl_S(E,\Tr,W)$.
\end{lem}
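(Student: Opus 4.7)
The plan is to treat openness and density of the preimage separately. Openness is immediate: the subset $\prod_{v\in \Tr\setminus\{0\}} U_{W(v)}$ is open in $\prod_{v\in \Tr\setminus\{0\}} \bG_S(E,W(v))$ as a finite product of opens, and its preimage under any morphism of schemes is open. For density, I would use that the preimage of the product equals $\bigcap_{v\in \Tr\setminus\{0\}} \pi_v^{-1}(U_{W(v)})$, where $\pi_v : \Fl_S(E,\Tr,W) \to \bG_S(E,W(v))$ is the projection to the $v$-th factor of \eqref{eq_dense_invers_image}. Since a finite intersection of dense open subsets of any scheme is again dense (both properties are tested on generic points of irreducible components), it suffices to show that each $\pi_v^{-1}(U_{W(v)})$ is dense in $\Fl_S(E,\Tr,W)$.

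To this end, I would factor $\pi_v$ as
\[
\Fl_S(E,\Tr,W) \longrightarrow \Fl_S(E,\Tr^{\leq v}, W|_{\Tr^{\leq v}}) \longrightarrow \bG_S(E,W(v)).
\]
The subset $\Tr^{\leq v} \subset \Tr$ is a subtree in the sense of \cref{defin_tree}: it is downward closed and contains the initial object $0$. Hence the first arrow is smooth surjective by \cref{restriction_subtree_smooth}. Since $\Tr^{\leq v}$ is totally ordered by the very definition of a tree, the source of the second arrow is a partial flag variety, and the second arrow itself is the forgetful map onto one of its constituent Grassmannians, which is an iterated Grassmannian bundle and hence also smooth and surjective. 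Therefore $\pi_v$ is flat.

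To finish, I would invoke the elementary fact that a flat morphism $f : X \to Y$ pulls dense opens back to dense opens: by lifting of generizations along $f$, every generic point of $X$ is sent to a generic point of some irreducible component of $Y$, hence lies in the preimage of any dense open subset of $Y$. Applied to $\pi_v$, this yields density of $\pi_v^{-1}(U_{W(v)})$, which together with the first paragraph concludes the proof. The only step with any real content is the factorization through the subtree $\Tr^{\leq v}$, and even this is immediate once one recognizes $\Tr^{\leq v}$ as a downward closed subposet of $\Tr$ so that \cref{restriction_subtree_smooth} applies; the rest is formal manipulation of generic points and finite intersections.
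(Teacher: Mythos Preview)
Your proof is correct and takes a genuinely different route from the paper's.  The paper first reduces to $S$ irreducible, invokes \cref{restriction_subtree_smooth_irr} to conclude that $\Fl_S(E,\Tr,W)$ is itself irreducible, and then only has to exhibit a single point in the preimage; it does so by induction on $|\Tr|$, peeling off a maximal vertex and shrinking the $U_a$ along the way.  You instead bypass irreducibility entirely: writing the preimage as $\bigcap_v \pi_v^{-1}(U_{W(v)})$ and showing each $\pi_v$ is flat via the factorization through the totally ordered subtree $\Tr^{\leq v}$ is both shorter and more robust --- it works uniformly over any base $S$ without preliminary reductions, and it does not need \cref{restriction_subtree_smooth_irr} at all.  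The paper's argument, on the other hand, is constructive: it actually produces a point of the preimage, which your argument does not.  One small remark: your second arrow $\Fl_S(E,\Tr^{\leq v},W|_{\Tr^{\leq v}}) \to \bG_S(E,W(v))$ is not covered by \cref{restriction_subtree_smooth} since $\{0,v\}$ is not downward closed in $\Tr^{\leq v}$ when $d(v)>1$, but you correctly handle this by the classical fact that forgetful maps between partial flag schemes are iterated Grassmannian bundles.
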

\begin{proof}
Since  $\Fl_S(E,\Tr,W)$ is flat over $S$ by \cref{restriction_subtree_smooth}, the going-down property implies that the inverse image of a dense open subset of $S$ is dense in $\Fl_S(E,\Tr,S)$.
Thus, we can always replace $S$ by a dense open subset.
Hence, we can suppose that $S$ is irreducible.
By \cref{restriction_subtree_smooth_irr}, the scheme $\Fl_S(E,\Tr,W)$ is then irreducible.
Hence, to prove \cref{dense_invers_image}, we are left to show that the inverse image of 
\eqref{eq_dense_invers_image_open} by \eqref{eq_dense_invers_image} is not empty.
To do this, we argue by recursion on the cardinality of $\Tr$.
If $|\Tr|=1$, both \eqref{eq_dense_invers_image_open} and the target of \eqref{eq_dense_invers_image} are the terminal object of $\Sch_S$, that is $S$. 
In that case, there is thus  nothing to prove.
Assume that $|\Tr|>1$.
Choose a maximal element $w\in \Tr$ and let $v$ be the its immediate antecedent.
Put  $\Tr':=\Tr\setminus \{w\}$ and $W':=W|_{\Tr\setminus \{w\}}$ and consider the span
$$
  \begin{tikzcd}
\bG_S(E,W(v))   &   \Fl_S(E,W(w),W(v))   \arrow{l}{p}   \arrow{r}{q} & \bG_S(E,W(w))  \ .
  \end{tikzcd}
$$
By \cref{restriction_subtree_smooth}, the map $p$ is flat.
Thus,  $p(q^{-1}(U_{W(w)}))$ is an  open subset of $\bG_S(E,W(v))$.
Since $S$ is irreducible,  so is $\bG_S(E,W(v))$.
Hence,  $p(q^{-1}(U_{W(w)}))$ is a dense open subset of $\bG_S(E,W(v))$.
For $a\neq W(v)$,  put $U'_a=U_a$ and put
$$
U'_{W(v)} := U_{W(v)} \cap p(q^{-1}(U_{W(w)}))\ .
$$
By recursion assumption applied to $\Tr'$ and to the $U'_a$,  there is $F_{\bullet}\in \Fl_S(E,\Tr',W')$ such that $F_{v'}\in U'_{W(v')} \subset U_{W(v')}$ for every $v'\in \Tr'= \Tr\setminus \{w\}$.
By construction, $F_v\in  U_{W(v)}$ contains a $W(w)$-dimensional subspace  $F_w \in  U_{W(w)}$.
Then, completing $F_{\bullet}$ with $F_w$ gives a point of $\Fl_S(E,\Tr,W)$ meeting our requirement.
\end{proof}

\section{Higher codimension Lefschetz theorems via multi-flags}

The goal of this section is to extend \cref{micro_local_Lefschetz_hyperplane}
and \cref{Wild_Lefschetz_hyperplane} to higher codimension.

\subsection{Boundedness for total degrees}

\begin{construction}\label{family_projective_subspace}
We are going to apply \cref{Bounddmdeg_normal} to the families of sections by projective subspaces.
Let $X$ be a smooth projective scheme over $k$.
Let $i : X\hookrightarrow \bP_k(E)$ be a closed immersion satisfying \cref{relative_beilinson_assumption}.
For $2\leq r \leq \rk E$, let $\Fl_k(E,1,r)$
be the universal dimension $r-1$-projective subspace in $\bP_k(E)$  and
 consider the commutative diagram
$$
\begin{tikzcd}  
X(E,r)\arrow{r}{p_{r,X}}   \arrow{d}{i_r}  &  X\arrow{d}{i} \\
\Fl_k(E,1,r)\arrow{r} \arrow{d}  &   \bP_k(E)   \arrow{d}  \\
 \bG_k(E,r)  \arrow[leftarrow, uu,bend left = 70, near end,  "p_{r,X}^{\vee}"] \arrow{r}   &   \Spec k
\end{tikzcd}
$$
 with cartesian upper square.
\end{construction}

\begin{lem}\label{existence_Ua}
In the setting of \cref{family_projective_subspace}, there is a dense open subset $U \subset \bG_k(E,r)$ above which $p_{r,X}^{\vee} : X(E,r) \to  \bG_k(E,r) $ is smooth.
\end{lem}

\begin{proof}
By Bertini's theorem \cite[Théorème 6.10.2]{Jouanolou}, the generic fibre of $p_{r,X}^{\vee} : X(E,r) \to  \bG_k(E,r) $ is smooth.  
By generic flatness theorem and openness of the smooth locus \cite[Théorème 12.2.4]{EGAIV} for proper flat morphisms of finite presentation, there is a dense open subset $U \subset \bG_k(E,r)$ above which $p_{r,X}^{\vee} : X(E,r) \to  \bG_k(E,r) $ is flat with smooth fibres.
The conclusion follows.
\end{proof}

\begin{lem}\label{beilinson_assumption_on_subspace}
Let $X$ be a smooth projective variety over  $k$ and let  $i : X\hookrightarrow \bP_k(E)$ be a closed immersion satisfying \cref{relative_beilinson_assumption}.
Let $F\subset \bP_k(E)$ be a projective subspace such that 
$X\cap F$ is smooth.
Then,  $X\cap F\hookrightarrow F$ satisfies \cref{relative_beilinson_assumption}.
\end{lem}

\begin{proof}
Put  $\cL:=i^*\cO_{\bP_k(E)}(1)$ and let $u,v \in X\cap F$ be closed points.
By assumption, the top horizontal arrow of the following commutative diagram
$$
\begin{tikzcd}  
\Gamma(\bP_k(E),\cO_{\bP_k(E)}(1)) \arrow{r} \arrow{d}   &  \Gamma(X,\cL) \arrow{r} \arrow{d}  &  \cL_u/\mathfrak{m}^2_u \cL_u \oplus \cL_v/\mathfrak{m}^2_v \cL_v    \arrow{d}   \\
\Gamma(F,\cO_{F}(1)) \arrow{r}    &  \Gamma(X\cap F,\cL|_F) \arrow{r}  &  (\cL|_F)_u/\mathfrak{m}^2_u (\cL|_F)_u \oplus (\cL|_F)_v/\mathfrak{m}^2_v (\cL|_F)_v   
\end{tikzcd}
$$
is surjective.
Since the right vertical arrow is surjective as well, so is the bottom horizontal arrow.
\end{proof}

\begin{lem}\label{Ua_Beilinson_assumption}
In the setting of \cref{family_projective_subspace}, let
 $U \subset \bG_k(E,r)$ be an open subset above which $p_{r,X}^{\vee} : X(E,r) \to  \bG_k(E,r) $  is smooth.
Let $E' \in \Coh(\bG_k(E,r))$ be the universal locally free quotient of rank $r$ of $E$.
Then, via the  canonical identification 
$$
\Fl_k(E,1,r) \simeq\bP_{\bG_k(E,r)}(E')
$$
supplied by \cref{Fl_and_Gr}, the closed immersion
$$
i_r : X(E,r) \hookrightarrow \Fl_k(E,1,r) 
$$
over $\bG_k(E,r)$ satisfies \cref{relative_beilinson_assumption} above $U$.
\end{lem}

\begin{proof}
Let $\overline{x} \to U$ be an algebraic geometric point lying above an algebraic geometric point $\sbar\to \Spec k$ and let $F\subset \bP_{\sbar}(E_{\sbar})$ be the corresponding projective subspace.
Then, the pullback of $i_r$ over $\overline{x}$  reads as $X_{\sbar}\cap F \hookrightarrow F$.
Thus, \cref{Ua_Beilinson_assumption} follows from \cref{beilinson_assumption_on_subspace}.
\end{proof}

\begin{lem}\label{bounded_degree_on_subspace}
Let $(X,\Sigma)$ be a smooth projective stratified scheme over $k$ algebraically closed.
Let $i : X\hookrightarrow \bP_k(E)$ be a closed immersion  satisfying \cref{relative_beilinson_assumption}.
Let $2 \leq r \leq \rk E$ and $a\leq b$ be integers.
Then, there is  an admissible function $\fc :\bQ[\Coh(X)]\to \bQ$ and $\fd\in \mathds{N}[x]$, there is a dense open subset $U\subset \bG_k(E,r)$ such that for every finite local ring $\Lambda$ of residue  characteristic $\ell \neq p$, every  $F \in U(k)$, every $\cE\in \bQ[\Coh(X)]$  and $\cK\in D_{\Sigma|_{X\cap F},\ft}^{[a,b]}(X\cap F,\cE|_{X\cap F},\Lambda)$,  we have :
\begin{enumerate}\itemsep=0.2cm
\item  The scheme $X\cap F$  is smooth.

\item  $|\totdeg_{i_F}(CC(\cK)) | \leq \fd(\fc(\cE)) \cdot \Rk_{\Lambda} \cK$ where $i_F : X\cap F \hookrightarrow F$ is the inclusion.
\end{enumerate}

\end{lem}
\begin{proof}
We use the notations from \cref{family_projective_subspace}.
By \cref{existence_Ua}, there is a dense open subset $U\subset \bG_k(E,r)$ such that 
$$
X(E,r) \times_{\bG_k(E,r)} U\to  U
$$ 
is smooth projective.
By \cref{Ua_Beilinson_assumption}, the closed immersion
$$
X(E,r) \times_{\bG_k(E,r)} U \hookrightarrow \Fl_k(E,1,r) \times_{\bG_k(E,r)} U
$$
satisfies \cref{relative_beilinson_assumption} above $U$.
Put $X(E,r)_U :=X(E,r)\times_{\bG_k(E,r)} U$ endowed with the stratification induced by $\Sigma$.
By \cref{Bounddmdeg_normal} applied to  $X(E,r)_U \to U$, there is an admissible function $\fc : \bQ[\Coh(X(E,r)_U)]\to \bQ$ and $\fd \in \bN[x]$   such that for every finite local ring $\Lambda$ of residue characteristic $\ell \neq p$, every  $F \in U(k)$, every $\cE\in \bQ[\Coh(X)]$ and every $\cK\in D_{\Sigma|_{X \cap F},\ft}^{[a,b]}(X \cap F,\cE|_{X \cap F},\Lambda)$, we have 
$$
|\totdeg_{i_F}(CC(\cK)) | \leq \fd(\fc(\cE|_{X(E,r)_U})) \cdot \Rk_{\Lambda} \cK \ .
$$
The conclusion thus follows.
\end{proof}

\begin{defin}\label{C_transversality}
Let $X$ be a projective  scheme over $k$.
Let $i : X\hookrightarrow \bP_k(E)$ be a closed immersion, let $\Tr$ be a rooted  tree and let $\cC\subset D_{ctf}^b(X,\Lambda)$ be a full subcategory.
We say that  $F_{\bullet}\in \Fl_k(E,\Tr)(k)$ is \textit{$\cC$-transversal} if for every $\cK \in \cC$, the tree $\Tr$ admits a branch $B$ such that for any consecutive vertices $v\leq w$ in $B$, the map $F_w \hookrightarrow  F_v$ is $SS((i_{\ast}\cK)|_{F_v})$-transversal.
\end{defin}

\begin{rem}
We will abuse the terminology and refer to a branch $B\subset \Tr$ as in \cref{C_transversality} as a \textit{$\cK$-transversal branch of $F_{\bullet}$}.
\end{rem}

\begin{rem}
     Observe that \cref{C_transversality} only asks for $F_w \hookrightarrow  F_v$ to be $SS((i_{\ast}\cK)|_{F_v})$-transversal, as opposed to \textit{proper} $SS((i_{\ast}\cK)|_{F_v})$-transversal.
     In particular, we don't claim any relationship between $SS((i_{\ast}\cK)|_{F_v})$ and $SS(i_{\ast}\cK)$.
\end{rem}

\begin{rem}\label{perv_transversal}
If $\cP\in \Perv_{tf}(X,\Lambda)$ and if $F_{\bullet}\in \Fl_k(E,\Tr)(k)$ admits a  $\cP$-transversal branch $B\subset \Tr$, then $\cP|_{X\cap F_v}[-d(v)]$ is perverse for every $v\in B$ by \cite[Lemma 8.6.5]{cc}.
\end{rem}

For the notion of ramified enough rooted  tree, let us refer to \cref{ramified_tree}.

\begin{thm}\label{micro_local_Lefschetz}
Let $(X,\Sigma)$ be a projective stratified scheme over $k$ algebraically closed.
Then, there is a closed immersion $i : X \hookrightarrow \bP_k(E)$, an admissible function $\fc : \bQ[\Coh(X)]\to \bQ$ and $\fd\in \mathds{N}[x]$  such that for every $2\leq a \leq \rk E$, every $U_a \subset \bG_k(E,a)$ dense open, every $r\geq 1$, every $\cE\in \bQ[\Coh(X)]$ and every rooted  tree $\Tr$ ramified enough with respect to $(\rk E,\fd(\fc(\cE) ) \cdot r)$,  there is a dense open 
subset $U \subset \Fl_k(E,\Tr)$ satisfying the following property : for every finite local ring $\Lambda$ of residue  characteristic $\ell \neq p$ and every $F_{\bullet}\in U(k)$, we have :
\begin{enumerate}\itemsep=0.2cm
\item $F_v\in U_{\rk E-d(v)}$ for every $v\in \Tr$.
\item $F_{\bullet}$ is $\Perv_{\Sigma,tf}^{\leq r}(X,
\cE,\Lambda)$-transversal.
\end{enumerate}
\end{thm}

\begin{proof}
Let $\iota : X\hookrightarrow Y$ be a closed embedding in some smooth projective scheme over $k$.
Let $\Sigma_Y$ be the stratification on $Y$ given by $Y-X$ and the $Z\in \Sigma$.
Choose a closed immersion $i_Y : Y \hookrightarrow \bP_k(E)$ satisfying \cref{relative_beilinson_assumption} and assume that \cref{micro_local_Lefschetz} holds for $(Y,\Sigma_Y,i_Y)$. 
Let 
$\fc_Y : \bQ[\Coh(Y)]\to \bQ$ and $\fd_Y\in \mathds{N}[x]$ as given by \cref{micro_local_Lefschetz} applied to $(Y,\Sigma_Y,i_Y)$.
If we put $i := i_Y \circ \iota :  X \hookrightarrow \bP_k(E)$, then $\fc_Y \circ \iota_* : \bQ[\Coh(X)]\to \bQ$ and $\fd_Y\in \mathds{N}[x]$ do the job.
Hence, we are left to prove \cref{micro_local_Lefschetz} in the case where $X$ is smooth over $k$ and we have a closed immersion $i : X \hookrightarrow \bP_k(E)$ satisfying \cref{relative_beilinson_assumption}.\\ \indent
By \cref{bounded_degree_on_subspace}, there is an admissible function $\fc : \bQ[\Coh(X)]\to \bQ$, a polynomial $\fd\in \mathds{N}[x]$  and dense open subsets $V_a\subset \bG_k(E,a)$ for $a=2,\dots, \rk E$ such that for every finite local ring $\Lambda$ of residue characteristic $\ell \neq p$, every $r\geq 1$ and $\cE\in \bQ[\Coh(X)]$,  every $F \in V_a(k)$, the scheme $X \cap F$  is smooth  and for every $\cP\in \Perv_{\Sigma|_{X \cap F},tf}^{\leq r}(X \cap F,\cE|_{X \cap F},\Lambda)$, we have
$$
|\totdeg_{i_{F}}(CC(\cP)) | \leq \fd(\fc(\cE)) \cdot r 
$$
where $i_F : X\cap F \hookrightarrow F$ is the inclusion.
Fix $r\geq 1$ and $\cE\in \bQ[\Coh(X)]$ and  $U_a \subset \bG_k(E,a)$ dense open for $2\leq a \leq \rk E$.
Let $\Tr$ be a rooted  tree ramifying enough with respect to $(\rk E, \fd(\fc(\cE))\cdot r)$. 
Consider the morphism
$$
\Fl_k(E,\Tr) \to \prod_{v\in \Tr\setminus \{0\}} \bG_k(E,\rk E - d(v))
$$
induced by evaluation at $v\in \Tr \setminus \{0\}$ and let  $A\subset \Fl_k(E,\Tr)$ be the inverse image of 
$$
\prod_{v\in \Tr\setminus \{0\}} U_{\rk E - d(v)}\cap V_{\rk E - d(v)} \ .
$$
By \cref{dense_invers_image}, the subset $A$ is dense open in $\Fl_k(E,\Tr)$.
By \cref{multiple_avoidance_hypersurface}, there is a dense open subset $B \subset \Fl_k(E,\Tr)$ such that for every  $F_{\bullet}\in B(k)$ and every non maximal vertex $v\in \Tr$, no hypersurface of $F_v^{\vee}$ of degree at most $\fd(\fc(\cE)) \cdot r$  contains all the $F_w$,  for $w$ immediate successor of $v$.
Put $U:=A\cap B$. 
It is enough to show that $U$ satisfies (2).\\ \indent
Let $\Lambda$  be a finite local ring of residue  characteristic $\ell \neq p$, let $F_{\bullet}\in U(k)$ and let $\cP\in \Perv_{\Sigma,tf}^{\leq r}(X,\cE,\Lambda)$.
We construct the branch $B\subset \Tr$ step by step by starting from the initial object 0.
Suppose that we have found $v\in \Tr$ non maximal such that for any consecutive vertices $v'\leq w'$ smaller than $v$, the map  
$F_{w'} \hookrightarrow  F_{v'}$ is $SS((i_{\ast}\cP)|_{F_{v'}})$-transversal.
By \cref{perv_transversal}, the complex $(i_{\ast}\cP)|_{F_v}[-d(v)]$ is perverse.
Hence, 
$$
\cP|_{X\cap  F_v}[-d(v)] \in 
\Perv_{\Sigma|
_{X \cap  F_v},tf}^{\leq r}(X\cap F_v,\cE|
_{X\cap  F_v},\Lambda) \ .
$$
Let     
$$
	\begin{tikzcd}
F_v^{\vee}& Q_v \arrow[l,"p_v^{\vee}"]  \arrow{r}{p_v}&   F_v   
	\end{tikzcd}
$$
be the universal hypersurface of $F_v$.
Since $F_{\bullet}$ lies in $A$, \cref{bound_deg_SS} gives 
$$
\deg p_v^{\vee}(\bP(SS((i_{\ast}\cP)|_{F_v}))) \leq |\totdeg_{i_{F_v}}(CC(\cP|_{X\cap F_v}[-d(v)])) |\leq 
\fd(\fc(\cE)) \cdot r \ .
$$
Since $F_{\bullet}$ lies in $B$,  the non maximal vertex $v$ admits an immediate successor $w$ such that 
$$
F_w\notin p_v^{\vee}(\bP(SS((i_{\ast}\cP)|_{F_v})))\subset F_v^{\vee} \ .
$$
By \cref{transversality_H}, the inclusion $F_w \hookrightarrow F_v$ is thus $SS((i_{\ast}\cP)|_{F_v})$-transversal.
 \end{proof}

The condition (1) from  \cref{micro_local_Lefschetz} provides some flexibility in the choice of the subspaces $F_v$.
The flexibility needed in the present paper suggests to introduce the following

\begin{defin}\label{good_position}
Let $U$ be a smooth scheme of finite type over $k$ of pure dimension $n\geq 2$ and let $i : U \hookrightarrow \bP_k(E)$ be an immersion.
Let $\Tr$ be a rooted  tree of depth $\leq n-1$.
We say that $F_{\bullet} \in  \Fl_k(E,\Tr)(k)$ is in \textit{good position with respect to $U$} if for every  $v\in \Tr$, for every irreducible component $Z$ of $U$, the space $F_v$ is transverse to $Z$ and  $Z\cap F_v$ is irreducible.
\end{defin}

\begin{rem}
In the setting of \cref{good_position}, assume that $k$ is algebraically closed.
By Bertini's theorem \cite[Théorème 6.3]{Jouanolou}  and \cref{dense_invers_image}, there is a dense open  subset $V\subset  \Fl_k(E,\Tr)$ such that every  $F_{\bullet} \in  V(k)$ is in good position with respect to $U$.
\end{rem}


%
%

\subsection{Lefschetz recognition principle}



\begin{lem}\label{transversality_and_Hom_branch}
Let $X$ be a projective scheme of finite type of pure dimension $n\geq 2$ over $k$ algebraically closed.
Let $j : U\hookrightarrow X$ be an affine open immersion with $U$ smooth over $k$.
Let $i : X\hookrightarrow \bP_k(E)$ be a closed immersion and let $\cL,\cM\in \Loc_{\ft}(U,\Lambda)$.
Let $B$ be a single-branched rooted  tree of depth at most $n-1$ and let $F_{\bullet}\in Fl_k(E,B)(k)$  in good position with respect to $U$ for which $B$ is $j_!\cHom(\cL,\cM)$-transversal.
Then,  the canonical morphism
$$
\Hom(\cL,\cM)\to \Hom(\cL|_{U\cap F_v},\cM|_{U\cap F_v})
$$
is an isomorphism for every $v\in B$.
\end{lem}

\begin{proof}
Let $v,w\in B$ such that $w$ is an immediate successor of $v$.
It is enough to show that the canonical morphism
$$
\Hom(\cL|_{U\cap F_v},\cM|_{U\cap F_v})\to \Hom(\cL|_{U\cap F_w},\cM|_{U\cap F_w})
$$
is an isomorphism.
Consider the following commutative diagram with cartesian squares
$$
	\begin{tikzcd}
	        U\cap F_v    \arrow{d}   \arrow{r}{j_v} &    X\cap F_v    \arrow{d}   \arrow{r}{i_v}& F_v   \arrow{d} \\
   U\arrow{r}{j}      &    X  \arrow{r}{i}  &  \bP_k(E)\ .
	\end{tikzcd}
$$
We have 
$$
(i_*j_!(\cHom(\cL,\cM)))|_{F_v} \simeq i_{v*}j_{v!}(\cHom(\cL|_{U\cap F_v},\cM|_{U\cap F_v})) \ .
$$ 

On the other hand, $U$ is smooth, $F_{\bullet}$ is in good position with respect to $U$ and the inclusion $F_w\hookrightarrow F_v$ is $SS((i_*j_!(\cHom(\cL,\cM)))|_{F_v})$-transversal.
The conclusion then follows from \cref{transversality_and_Hom}.
\end{proof}

\begin{defin}\label{realizes_Lefschetz_recognition}
Let $X$ be a scheme of finite type over $k$.
Let $i : X\hookrightarrow \bP_k(E)$ be an immersion, let $\Tr$ be a rooted tree and let $\cC\subset D^b_{ctf}(X,\Lambda)$ be a full subcategory.
We say that  $F_{\bullet}\in \Fl_k(E,\Tr)(k)$ \textit{realizes the Lefschetz recognition principle for $\cC$} if for every  $\cK_1,\cK_2 \in \cC$, there is a branch $B\subset \Tr$ such that for 
$a,b\in \{1,2\}$, the canonical morphism
$$
\Hom(\cK_a,\cK_b)\to \Hom(\cK_a|_{U\cap F_v},\cK_b|_{U\cap F_v})
$$
is an isomorphism for every $v\in B$.
\end{defin}

\begin{rem}\label{branch_distinguish}
If $B$ is a branch as in \cref{realizes_Lefschetz_recognition}, then for every $v\in B$,  we have $\cK_1\simeq\cK_2$   if and only if  $\cK_1|_{U\cap F_v} \simeq \cK_2|_{U\cap F_v} $.
\end{rem}

\begin{thm}\label{Lefschetz_recognition}
Let $X$ be a projective scheme of pure dimension $n\geq 2$ over $k$ algebraically closed.
Let $j : U\hookrightarrow X$ be an affine open immersion with $U$ smooth over $k$.
Then, there is a closed immersion $i : X\hookrightarrow \bP_k(E)$, there is an admissible function $\fc : \bQ[\Coh(X)]\to \bQ$ and $\fd\in \mathds{N}[x]$ depending only on $(X,U,i)$ such that for every $r\geq 1$, every $\cE\in \bQ[\Coh(X)]$ and every rooted  tree $\Tr$ of depth $\leq n-1$ ramified enough with respect to $(\rk E,\fd(\fc(\cE) ) \cdot 4r^2)$, there is a dense open subset $V \subset \Fl_k(E,\Tr)$ satisfying the following property : for every finite local ring  $\Lambda$ of residue characteristic $\ell \neq p$,  every $F_{\bullet} \in V(k)$ is in good position with respect to $U$ and realizes the Lefschetz recognition principle for  $\Loc^{\leq r}_{tf}(U,\cE,\Lambda)$.
\end{thm}

\begin{proof}
Put $\Sigma:=\{U,D\}$, let $i : X\hookrightarrow \bP_k(E)$ and $\fc : \bQ[\Coh(X)]\to \bQ$ and $\fd\in \mathds{N}[x]$ as given by  \cref{micro_local_Lefschetz} applied to $(X,\Sigma)$.
Let  $r\geq 1$ and $\cE\in \bQ[\Coh(X)]$.
Let $\Tr$ be a rooted  tree of depth smaller than $n-1$ ramifying enough with respect to $(\rk E,\fd(\fc(\cE) ) \cdot 4r^2)$.
Then, there is a dense open subset $V\subset \Fl_k(E,\Tr)$ such that for  every $F_{\bullet} \in V(k)$ and every finite local ring  $\Lambda$ of residue characteristic $\ell \neq p$, we have
\begin{enumerate}\itemsep=0.2cm
\item $F_{\bullet}$  is in good position with respect to $U$.

\item $F_{\bullet}$ is $\Perv_{\Sigma,tf}^{\leq 4r^2}(X,
\cE,\Lambda)$-transversal.
\end{enumerate}
Let $F_{\bullet} \in V(k)$ and $\cL_1,\cL_2\in \Loc^{\leq r}_{tf}(U,\cE,\Lambda)$.
By \cref{transversality_and_Hom_branch}, it is enough to find a branch $B$  which is $j_!\cHom(\cL_{a},\cL_{b})$-transversal for every $a,b\in \{1,2\}$.
Since 
$$
\bigoplus_{a,b\in \{1,2\}} j_!\cHom(\cL_{a},\cL_{b})[n]  
$$
is an object of $\Perv_{\Sigma,ft}^{\leq 4r^2 }(X,\cE,\Lambda)$ by  \cref{ramification _of_Hom} and the fact that $j : U\hookrightarrow X$ is affine,  the existence of $B$ follows from the property (2).
\end{proof}

\begin{rem}
By exploiting the fact that the dense open subset $V$ does not depend on $\Lambda$,  \cref{Lefschetz_recognition} upgrades easily to $\Qlbar$ coefficients.
\end{rem}

\begin{rem}
An analogue of \cref{Lefschetz_recognition} for flat bundles in characteristic 0 was obtained in \cite{HuTeyssierDRBoundedness}.
\end{rem}

\subsection{Wild Lefschetz in higher codimension}

\begin{defin}\label{realizes_wild_Lefschetz}
Let $U$ be a connected scheme of finite type of pure dimension $n\geq 2$ over  $k$.
Let $i : U\hookrightarrow \bP_k(E)$ be an immersion, let $\Tr$ be a rooted  tree of depth  $\leq n-1$ and let $\cC\subset \Loc_{tf}(U,\Lambda)$ be a full subcategory.
We say that  $F_{\bullet}\in \Fl_k(E,\Tr)(k)$ \textit{realizes the wild Lefschetz theorem for $\cC$} if  $F_{\bullet}$ is in good position with respect to $U$ and for every $\cL\in \cC$, there is a branch $B\subset \Tr$ such that for every $v\in B$, the sheaves $\cL$ and $\cL|_{U\cap F_{v}}$ have the same monodromy group.
We say that such $B$ \textit{preserves the monodromy group of $\cL$}.
\end{defin}

\begin{lem}\label{transversality_and_monodromy}
Let $X$ be a projective scheme of pure dimension $n\geq 2$ over $k$ algebraically closed.
Let $j : U\hookrightarrow X$ be an affine open immersion with $U$ smooth connected over $k$ and let $\ubar\to U$ be a geometric point.
Let $i : X\hookrightarrow \bP_k(E)$  be a closed immersion, let $\Tr$ be a rooted  tree of depth smaller than $n-1$ and let $F_{\bullet}\in \Fl_k(E,\Tr)(k)$ in good position with respect to $U$.
Let $\cL\in \Loc_{tf}(U,\Lambda)$.
Assume that $F_{\bullet}$ admits a $j_!\Lambda[\cL,\ubar]$-transversal branch $B$.
Then, $B$ preserves the monodromy group of $\cL$.
\end{lem}
\begin{proof}
Let $v\in B$.
We have to show that $\cL$ and $\cL|_{U\cap F_{v}}$ have the same monodromy group.
By \cref{sheaf_criterion_same_monodromy}, we have to show that the canonical morphism
$$
H^0(U,\Lambda[\cL,\ubar]) \to H^0(U\cap F_v,\Lambda[\cL,\ubar]|_{U\cap F_v}) 
$$
is an isomorphism.
By \cref{transversality_and_Hom_branch}, the conclusion follows from the fact that $B$ is  $j_!\Lambda[\cL,\ubar]$-transversal.
\end{proof}

\begin{thm}\label{Wild_Lefschetz}
Let $X$ be a projective  scheme of pure dimension $n\geq 2$ over $k$ algebraically closed.
Let $j : U\hookrightarrow X$ be an affine open immersion with $U$ smooth connected over $k$.
Then, there is a closed immersion $i : X\hookrightarrow \bP_k(E)$, there is an admissible  function $\fc : \bQ[\Coh(X)]\to \bQ$ and $\fd\in \mathds{N}[x]$ such that for every $r\geq 1$, every $\cE\in \bQ[\Coh(X)]$ and every rooted  tree $\Tr$ of depth $\leq n-1$ ramifying enough with respect to $(\rk E,\fd(\fc(\cE))\cdot |\GL_r(\Lambda)|)$, there is a dense open subset 
$V\subset \Fl_k(E,\Tr)$ with the property that every $F_{\bullet} \in V(k)$ realizes the wild Lefschetz theorem for $\Loc^{\leq r}_{tf}(U,\cE,\Lambda)$.
\end{thm}

\begin{proof}
Let $i : X\hookrightarrow \bP_k(E)$, let $\fc : \bQ[\Coh(X)]\to \bQ$ and $\fd\in \mathds{N}[x]$ as given by  \cref{micro_local_Lefschetz} applied to $(X,\Sigma)$ where $\Sigma:=\{U,X-U\}$.
Let  $r\geq 1$ and $\cE\in \bQ[\Coh(X)]$.
Let $\Tr$ be a rooted  tree of depth  $\leq n-1$  ramifying enough with respect to $(\rk E,\fd(\fc(\cE))\cdot |\GL_r(\Lambda)|)$ and let $V\subset \Fl_k(E,\Tr)$ be a dense open subset such that for every $F_{\bullet} \in  V(k)$, we have 
\begin{enumerate}\itemsep=0.2cm
\item $F_{\bullet}$ is in good position with respect to  $U$.

\item $F_{\bullet}$ is $\Perv_{\Sigma,tf}^{\leq |\GL_r(\Lambda)|}(X,
\cE,\Lambda)$-transversal.
\end{enumerate}
Let $F_{\bullet} \in V(k)$ and let $\cL\in \Loc^{\leq r}_{tf}(U,\cE,\Lambda)$.
We have to show the existence of a branch preserving the monodromy group of $\cL$.
Given a geometric point $\ubar\to U$, we have to show by \cref{transversality_and_monodromy} that $F_{\bullet}$ admits a $j_!\Lambda[\cL,\ubar]$-transversal branch.
Since $j_!\cL$ has log conductors bounded by $\cE$, \cref{representation_local_system_lc_bound} implies that $j_!\Lambda[\cL,\ubar]$ has log conductors bounded by $\cE$ as well.
Since $j : U\hookrightarrow X$ is affine, we deduce
$$
j_!\Lambda[\cL,\ubar][n]\in \Perv_{\Sigma, tf}(X,\cE,\Lambda) \ .
$$
Since $j_!\Lambda[\cL,\ubar][n]$  has rank smaller than $|\GL_r(\Lambda)|$, the existence of the sought-after branch follows by construction of $V$.

\end{proof}

Similarly as in \cref{thm_5bis}, if one wishes to ignore the quantitative aspect of \cref{thm_5},  one gets the following
\begin{cor}
In the setting of \cref{Wild_Lefschetz},  there is a closed  immersion $i : X\hookrightarrow \bP_k$  such that for every prime $\ell \neq p$,  every $r\geq 0$, every $1\leq d\leq \dim \bP_k -1$ and every effective Cartier divisor $R$ supported on $D$,  there exist linear subspaces $(F_1,\dots, F_{N})$ of dimension $d$ with $N$ depending on $\ell,  r$ and $R$ such that for every  $\cL\in  \Loc(U,\bF_{\ell})$ of rank $\leq r$ with log conductors bounded by $R$,  one of the $F_i$ preserves the monodromy group of $\cL$.
\end{cor}


\begin{thebibliography}{{Sai}17b}




\bibitem[AS02]{RamImperfect}
A.~{Abbes} and T.~{Saito}, \emph{{{Ramification of local fields with imperfect
  residue fields}}}, {{Amer. J. Math.}} \textbf{{124}} ({2002}).

\bibitem[AS03]{as ii}
\bysame, 
\emph{Ramification of local fields with imperfect residue fields II}.
Doc. Math. Extra Volume Kato, (2003).


\bibitem[AS11]{Ram_and_clean}
\bysame, \emph{Ramification and cleanness}, {Tohoku Math. J.} \textbf{{63}} ({2011}).


\bibitem[SGA4III]{SGA4-3}
M.~{Artin}, A.~{Grothendieck}, and J.-L {Verdier}, \emph{Th{\'e}orie des
  {T}opos et {C}ohomologie {E}tale des {S}ch{\'e}mas}, Lecture {N}otes in
  {M}athematics, vol. 305, Springer-{V}erlag, 1973.


\bibitem[{Bei}16]{bei}
A.~{Beilinson}, \emph{{Constructible sheaves are holonomic}}, {Sel. Math. New.
  Ser.} \textbf{{22}} ({2016}).


\bibitem[BBDG18]{BBD}
A.~{Beilinson}, J.~{Bernstein}, and P. {Deligne} and O. {Gabber}, \emph{Faisceaux pervers}, Astérisque, vol. 100, {SMF}, 2018.










  
 \bibitem[Del87]{FinMon}
P. ~{Deligne},  \emph{Un théorème de finitude pour la monodromie}, in Discrete Groups in Geometry and Analysis (New Haven, 1984), Progr. Math. {\bf 67}, Birkhäuser Boston, (1987). 

 



\bibitem[{Del}16]{DeltoEsn}
\bysame, \emph{{Letter to E. Esnault}}, {February 29th, 2016}.
  
  
  
\bibitem[D12]{DriDel}
V. {Drinfeld}, \emph{
On a conjecture of Deligne}, {Moscow Mathematical Journal} \textbf{{12}} ({2012}).





\bibitem[E17]{ELefschetz}
H. {Esnault}, \emph{
Survey on some aspects of Lefschetz theorems in algebraic geometry
}, {Revista matemática complutense} \textbf{{30}} ({2017}).




\bibitem[{EK}16]{EKin}
H. {Esnault} and L. {Kindler}, \emph{Lefschetz theorems for tamely ramified coverings}, {Proceedings of the American Mathematical Society} \textbf{{144}} ({2016}).



\bibitem[{ES}21]{ES}
H. {Esnault} and V. {Srinivas}, \emph{Bounding ramification by covers and curves}, {Proceedings of Symposia in Pure Mathematics} \textbf{{104}} ({2021}).

\bibitem[{Fu}11]{fu}
L.~{Fu}, \emph{{{Etale Cohomology Theory}}}, {{Nankai Tracts in Mathematics}},
  vol.~{13}, {2011}.


\bibitem[{GM}88]{GM}
M. {Goresky} and R. {MacPherson}, \emph{Stratified Morse Theory}, {Ergebnisse der Mathematik und ihrer Grenzgebiete}, {Springer,} {\bf 14}, (1988).



\bibitem[{Gro}68]{SGA2}
A. {Grothendieck}, \emph{Cohomologie locale des faisceaux cohérents et théorème de Lefschetz
locaux et globaux (SGA2)}, {Advanced Studies in Pure Math., North-Holland Amsterdam, vol. 2} ({1968}).



\bibitem[GD61]{EGA3-1}
\bysame,  \emph{El{\'e}ments de {G}{\'e}om{\'e}trie {A}lg{\'e}brique {III}}, Publications {M}ath{\'e}matiques de l'{IHES}, vol. 11,  1961.




\bibitem[GD64]{EGAIV}
\bysame,  \emph{El{\'e}ments de {G}{\'e}om{\'e}trie {A}lg{\'e}brique {IV}}, Publications {M}ath{\'e}matiques de l'{IHES}, vol. 20,24,28,32,  1966.


\bibitem[{HL}85]{HL}
H. {Hamm} and L. {Dũng Tráng}, \emph{Lefschetz theorems on quasi-projective varieties}, {Bulletin de la Société Mathématique de France} \textbf{{113}} ({1985}).








\bibitem[Hir17]{Hiranouchi}
T. {Hiranouchi}, \emph{A Hermite–Minkowski type theorem of varieties over finite fields}, {Journal of Number Theory} \textbf{{176}} ({2017}).

\bibitem[HT21]{HT}
H. {Hu}, J.-B. {Teyssier}, \emph{Characteristic cycle and wild ramification for nearby cycles of étale sheaves}, {J. für die Reine und Angew. Math.} \textbf{{776}} ({2021}).

\bibitem[HT24]{HuTeyssierDRBoundedness}
\bysame, \emph{Cohomological boundedness for flat bundles on surfaces and applications},
{Compositio Mathe\-matica.} \textbf{{160}}  ({2024}).


\bibitem[HT25a]{HuTeyssierSemicontinuity}
\bysame, \emph{Semi-continuity for conductors of étale sheaves},
{International Mathematics Research Notices.}  \textbf{{2026}} ({2026}).

\bibitem[HT25b]{HuTeyssierCohBoundedness}
\bysame, \emph{Bounding ramification with coherent sheaves},
{Tunisian J. Math.} \textbf{{8}} ({2026}).


\bibitem[HT25c]{HuTeyssierMinkowski}
\bysame, \emph{Estimates for Betti numbers and relative Hermite-Minkowski theorem for perverse sheaves},
{Preprint}, 2025.






\bibitem[Jou83]{Jouanolou}
J.-P.~{Jouanolou}, \emph{{Théorèmes de Bertini et applications}}, {Progress in Math., Birkhäuser Boston} \textbf{{42}} ({1983}).


\bibitem[Ka88]{KatzGauss}
N.~{Katz}, \emph{{Gauss sum, Kloosterman sums, and monodromy groups}}, {Ann. of Math. Stud., Princeton
University Press} \textbf{{116}} ({1988}).






\bibitem[KS90]{Kashiwara_Schapira}
M.~{Kashiwara} and P. {Schapira}, \emph{Sheaves on manifolds,} Springer-{V}erlag, (1990).




\bibitem[KS14]{KS}
M.~{Kerz} and S. {Saito}, \emph{Lefschetz theorem for abelian fundamental group with modulus,} Algebra Number Theory, {\bf 8}, (2014).













\bibitem[{Sai}08]{logcc}
T.~{Saito}, \emph{Wild ramification and the characteristic cycles of an $\ell$-adic sheaf,} J. of Inst. of Math. Jussieu, {\bf 8} 4, (2008).


\bibitem[{Sai}17a]{wr}
\bysame, \emph{{Wild ramification and the cotangent bundle}}, {{J. Algebraic
  Geom.}} \textbf{{26}} ({2017}).

\bibitem[{Sai}17b]{cc}
\bysame, \emph{{The characteristic cycle and the singular support of a
  constructible sheaf}}, {{Invent. Math.}} \textbf{{207(2)}} ({2017}).

\bibitem[{Sai}21]{saito21}
\bysame, \emph{Characteristic cycles and the conductor of direct image,} Journal of the American Mathematical Society \textbf{34}, (2021).


\bibitem[{Sai}22]{saito22}
\bysame, \emph{Cotangent bundle and microsupports in the mixed characteristic case,} Algebra and Number Theory, \textbf{16}:2, (2022).

\bibitem[{SY}17]{SY}
T.~{Saito} and Y. {Yatagawa}, \emph{Wild ramification determines the characteristic cycle}, {Ann. Sci. Éc. Norm. Supér.} \textbf{{50}} ({2017}).
  
  \bibitem[{Ser}68]{CL}
J.-P. {Serre}, \emph{Corps {L}ocaux}, Herman, 1968.

\bibitem[SP23]{SP}
The Stacks project authors, The Stacks project, \url{https://stacks.math.columbia.edu}, 2023.


\bibitem[{UYZ}20]{UYZ}
N.~{Umezaki} and E. {Yang} and Y. {Zhao}, \emph{Characteristic class and the $\epsilon$-factor of an étale sheaf,}  {Trans. Amer. Math. Soc.} \textbf{{373}} ({2020}).
  

\bibitem[Wei94]{Wei}
C.~{Weibel}, \emph{An Introduction to Homological Algebra,}  {Cambridge University Press} \textbf{{38}} ({1994}).
    
    
    
\end{thebibliography}
\end{document}